\newcommand{\Z}{\mathbb Z}
\newcommand{\R}{\mathbb R}
\newcommand{\positR}{(0,\infty)}
\newcommand{\C}{\mathbb C}
\newcommand{\N}{\mathbb N}
\newcommand{\E}{\mathbb E}
\newcommand{\im}{\mathbf{i}}
\newcommand{\K}{\mathcal K}
\renewcommand{\L}{\mathcal L}
\renewcommand{\P}{\mathbb P}
\newcommand{\Yb}{\mathbf{Y}}
\DeclareMathOperator*{\spec}{Spec}
\newcommand{\Db}{\mathbf D}
\newcommand{\Bb}{\mathbf B}
\newcommand{\Pc}{\mathcal P}
\newcommand{\Qb}{\mathbf Q}
\newcommand{\Xb}{\mathbf X}
\newcommand{\Cb}{\mathbf C}
\newcommand{\hRb}{\hat{\mathbf R}}
\newcommand{\hUpsilon}{\hat\Upsilon}
\newcommand{\hf}{\hat f}
\newcommand{\Rb}{\mathbf R}
\newcommand{\Zb}{\mathbf Z}
\newcommand{\Sb}{\mathbf S}
\newcommand{\Ib}{\mathbf I}
\newcommand{\Ab}{\mathbf A}
\newcommand{\bSb}{\underline{\mathbf S}}
\newcommand{\gb}{\mathbf{g}}
\newcommand{\mb}{\mathbf{m}}
\newcommand{\nb}{\mathbf{n}}
\newcommand{\ub}{\mathbf{u}}
\newcommand{\vb}{\mathbf{v}}
\newcommand{\xb}{\mathbf{x}}
\newcommand{\yb}{\mathbf{y}}
\newcommand{\zb}{\mathbf{z}}
\newcommand{\ind}{\mathbbm{1}}
\newcommand{\eqinlaw}{\overset{\L}{=}}
\newcommand{\lmax}{\lambda_{\mathrm{max}}}
\newcommand{\lmin}{\lambda_{\mathrm{min}}}
\newcommand{\gNc}{\mathcal{CN}}
\newcommand{\gNr}{\mathcal N}
\newcommand{\gN}{\mathcal N}
\newcommand{\dd}{\,\mathrm d}
\newcommand{\tq}{\,:\,}
\newcommand{\as}{\mathrm{a.s}}
\DeclareMathOperator*{\tr}{tr}
\DeclareMathOperator*{\diag}{diag}
\DeclareMathOperator*{\cov}{Cov}
\DeclareMathOperator*{\esssup}{ess\,sup}
\DeclareMathOperator*{\essinf}{ess\,inf}
\newcommand{\tran}{\top}
\newcommand{\cvweak}[1][]{%
\ifthenelse{\equal{#1}{}}{\xrightarrow{\mathcal D}}{\xrightarrow[#1]{\mathcal D}}%
}
\newtheorem{Th}{Theorem}[section]
\newtheorem{lemma}[Th]{Lemma}
\newtheorem{prop}[Th]{Proposition}
\newtheorem{corol}[Th]{Corollary}
\theoremstyle{definition}
\newtheorem{defi-prop}[Th]{Definition-Proposition}
\newtheorem*{example*}{Example}
\theoremstyle{remark}
\newtheorem*{Rq*}{Remark}
\newtheorem{Rq}{Remark}[section]
\numberwithin{equation}{section}
\title[Ratio-consistent estimation for LRD Toeplitz covariance]{Ratio-consistent estimation for long range dependent Toeplitz covariance with application to matrix data whitening}
\date{\today}
\author[P. Tian]{Peng Tian}
\address{Peng Tian, Department of Civil Engineering, The University of Hong Kong, Pokfulam Road, Hong Kong.}
\email{tianpeng83@gmail.com}
\author[J.F. Yao]{Jianfeng Yao}
\address{Jianfeng Yao, School of Data Science, The Chinese University of Hong Kong (Shenzhen)}
\email{jeffyao@cuhk.edu.cn}
\thanks{The authors gratefully acknowledge the support by Department of Statics and Actuarial Science, the University of Hong Kong.}
\keywords{Separable sample covariance matrix, long range dependence, whitening, Toeplitz matrix, high-dimensional PCA.}
\subjclass[2010]{Primary 62M15; Secondary 62H10, 15B52.}
\begin{document}

\maketitle

\begin{abstract}
We consider a data matrix $\Xb:=\Cb_N^{1/2}\Zb\Rb_M^{1/2}$ from a multivariate stationary process with a separable covariance function, where $\Cb_N$ is a $N\times N$ positive semi-definite matrix, $\Zb$ a $N\times M$ random matrix of uncorrelated standardized white noise, and $\Rb_M$ a $M\times M$ Toeplitz matrix. Under the assumption of long range dependence (LRD), we re-examine the consistency of two toeplitzifized estimators $\hRb_M$ (unbiased) and $\hRb_M^b$ (biased) for $\Rb_M$, which are known to be norm consistent with $\Rb_M$ when the process is short range dependent (SRD). However in the LRD case, some simulations suggest that the norm consistency does not hold in general for both estimators. Instead, a weaker {\it ratio consistency} is established for the unbiased estimator $\hRb_M$, and a further weaker {\it ratio LSD consistency} is established for the biased estimator $\hRb_M^b$. The main result leads to a consistent whitening procedure on the original data matrix $\Xb$, which is further applied to two real world questions, one is a signal detection problem, and the other is PCA on the space covariance $\Cb_N$ to achieve a noise reduction and data compression.
\end{abstract}

\section{Introduction}
Consider a random data matrix of the form
\begin{equation}
    \Xb=\Cb_N^{1/2}\Zb\Rb_M^{1/2} \label{eq:def_X}
\end{equation}
where $\Cb_N=(C_{N,i,j})$ and $\Rb_M=(R_{M,i,j})$ are $N\times N$ and $M\times M$ positive semi-definite Hermitian matrices, respectively, and $\Zb$ is a white noise array of size $N\times M$. The matrix $\Xb=(X_{i,j})$ has a so-called separable covariance function, that is, $\cov(X_{i,j}, X_{i',j'}) = C_{N,i,i'} R_{M,j,j'}$. In other words, $\Cb_N$ and $\Rb_M$ represent the covariance function between rows and columns of $\Xb$, respectively. In this paper, by mimicking a stationary time series structure across the column vectors, we assume that $R_M=(r_{i-j})$ is a Toeplitz covariance matrix. Such data matrices appear in many applications, for example, as the noise part of a signal-plus-noise models in signal processing problems \cite{vinogradova2015estimation, couillet2015rmt, terreaux2017robust}, or portfolio optimization problems \cite{jay2020improving}, or as a stand-alone model in \cite{vallet2017performance}. We note that if $\Cb_N$ is identity, the rows of $\Xb$ can be interpreted as i.i.d copies of a section of stationary process considered in \cite{merlevede2016empirical, merlevede2019unbounded, tian2020tracy}, and if $\Cb_N$ is diagonal with i.i.d random entries independent of $\Zb$, the rows of $\Xb$ can be interpreted as some i.i.d elliptically symmetric random vectors with Toeplitz scatter matrix $\Rb_M$,  which were considered in e.g. \cite{terreaux2017robust, jay2020improving}.

In the large dimensional context where both $M$ and $N$ are large, the estimation of $\Rb_M$ or $\Cb_N$ from the observable data $\Xb$ is a challenging question. Consider the following "sample covariance matrices"
\begin{equation}
    \Sb_X:=\frac{1}{M}\Xb\Xb^*, \quad \bSb_X:=\frac{1}{N}\Xb^*\Xb. \label{eq:intro:sample_covariance}
\end{equation}
Then direct calculation shows that $\E(\Sb_X)=(M^{-1}\tr\Rb_M)\Cb_N$ and $\E(\bSb_X)=(N^{-1}\tr\Cb_N)\Rb_M$. In other words, $\Sb_X$ and $\bSb_X$ are unbiased (up to scalar factors) estimators of the matrices $\Cb_N$ and $\Rb_M$, respectively. However, the results from Random matrix theory (RMT) show that neither of them is consistent in the large dimensional regime, see e.g. \cite{marvcenko1967distribution, silverstein1995strong, lixin2006spectral}.

Taking into account the Toeplitz structure of $\Rb_M$, it is possible to construct better estimators for $\Rb_M$. In \cite{vinogradova2015estimation}, the authors considered two estimators
\begin{equation}
    \hRb_M:=(\hat r_{i-j})_{1\le i,j\le M}, \quad \hRb_M^b:=(\hat r_{i-j}^b)_{1\le i,j\le M}, \label{eq:intro:S_toeplitzified}
\end{equation}
where
\begin{equation}
    \hat r_k:=\frac{1}{M-|k|}\sum S_{i+k,i}\ind_{\{1\le i,i+k\le M\}}, \quad \hat r_k^b:=\frac{1}{M}\sum S_{i+k,i}\ind_{\{1\le i,i+k\le M\}} \label{eq:intro:toepl_averaging}
\end{equation}
with $S_{i,j}$ the entries of $\bSb_X$. Let $\xi_N=\tr\Cb_N/N$. Note that $\hRb_M$ is an unbiased estimator of $\xi_N\Rb_M$, whereas $\hRb_M^b$ is biased. It is proved in \cite{vinogradova2015estimation, couillet2015rmt} that, if the entries of $\Zb$ are i.i.d standard complex Gaussian and if the sequence $(r_k)_{k\in\Z}$ in $\Rb_M$ is absolutely summable, both estimators are (spectral) norm consistent, that is,
\begin{equation}
    \left\|\hRb_M-\xi_N\Rb_M\right\| \xrightarrow{\mathrm{a.s}} 0\,, \quad \left\|\hRb_M^b-\xi_N\Rb_M\right\| \xrightarrow{\mathrm{a.s}} 0 \label{eq:converge_strong}
\end{equation}
as $M,N\to\infty$ with $N/M\to c\in\positR$. Such consistent estimators can be used to whiten the correlation between the columns of $\Xb$ in order to facilitate the inference on $\Cb_N$, as done in \cite{vinogradova2015estimation} and other articles. 

The absolute summability of the sequence $(r_k)_{k\in\Z}$ means that the columns of $\Xb$ are short range dependent (SRD), and this is crucial to the norm consistency \eqref{eq:converge_strong}. As an opposite scenario, the phenomenon of long range dependence (LRD) has been frequently observed in various fields like engineering and economic processes (see \cite{giraitis2012large, sheng2011fractional, doukhan2002theory, rangarajan2003processes} and the references therein). In this paper, we parameterize the LRD by $a\in(0,1)$, and study the consistency properties of $\hRb_M$ and $\hRb_M^b$ in the LRD case. Our main results are
\begin{enumerate}[label={(\roman{enumi})}]
    \item The norm consistency \eqref{eq:converge_strong} for the unbiased estimator $\hRb_M$ continues to hold in the LRD case for the lower half  $a\in(0,1/2)$ (the pro-SRD side), but simulation studies suggest that it may not hold for the upper  half  $a\in(1/2,1)$, see \S\ref{subsec:simu_norm_inconsistency}. \label{enum:The-norm-consistency}
    \item The unbiased estimator $\hRb_M$ is ratio consistent in the sense that \label{enum:The-unbiased-estimator}
    \begin{equation}
    \left\|\Rb _M^{-1/2}\hRb_M\Rb _M^{-1/2}-\xi_N\Ib\right\|\xrightarrow{\as} 0, 
    \label{eq:intro:ratio_consist}
    \end{equation}
    and consequently, 
    \begin{equation}
    \left\|\Rb _M^{-1/2}\hRb_M^{1/2}-\sqrt{\xi_N}\Ib\right\|\xrightarrow{\as} 0;
    \label{eq:intro:ratio_consist_direct}
    \end{equation}
    \item The biased estimator $\hRb_M^b$ is not ratio consistent. We will prove that under the same conditions with $\xi_N=1$, we have almost surely
    \begin{equation}
       \|\Rb_M^{-1/2}\hRb_M^b\Rb _M^{-1/2}- \Ib\|\not\to 0. \label{eq:intro:ratio_inconsistency_biased}
    \end{equation}
    As a corollary, $\hRb_M^b$ is not norm consistent, that is, almost surely, \begin{equation}
       \|\hRb_M^b-\Rb _M\|\not\to 0. \label{eq:intro:norm_inconsistency_biased}
    \end{equation}
    \label{enum:The-biased-estimator}
    \item \label{enum:A-weaker-ratio_2}A weaker ratio LSD consistency holds for $\hRb_M^b$ in the sense that the empirical spectral distribution (ESD) of $\Rb_M^{-1/2}\hRb_M^b\Rb_M^{-1/2}$ converges to $\delta_1$ (assuming $\xi_N\to 1$).
\end{enumerate}

An immediate application of \ref{enum:The-unbiased-estimator} is to whiten the correlation $\Rb_M^{1/2}$ in the data $\Xb$ by multiplication of $\hRb_M^{-1/2}$. Let
$$\Yb_w:=\Xb\hRb_M^{-1/2},\quad\text{and}\quad \Yb_{Rid}:=\xi_N^{-1/2}\Cb_N^{1/2}\Zb,$$
then by \eqref{eq:intro:ratio_consist_direct}, under some proper conditions, we will have 
$$\|\Yb_w-\Yb_{Rid}\|\xrightarrow{\as}0.$$
Note that the matrix $\Yb_{Rid}$ is not observable but can be approximated by the whitened data matrix $\Yb_w$, with which the inference on $\Cb_N$ becomes much easier, knowing that RMT contains many inference methods on $\Cb_N$ through the sample covariance matrix
$$\Sb_{Rid}:=\frac{1}{M}\Yb_{Rid}\Yb_{Rid}^*,$$
see e.g. \cite{yao2015sample} and \cite{johnstone2018pca}. In this paper, we apply this result in a signal plus noise model to detect signals, reduce noise and compress the data using PCA. Some other applications based on the ratio consistency of $\hRb_M$, such as the prediction of a multivariate separable time series, may be discussed in future works.

The result \ref{enum:The-biased-estimator} is striking because in the SRD case, $\hRb_M^b$ is a better estimator for $\Rb_M$ than $\hRb_M$ with a smaller variance. In the LRD case however, the bias is no longer negligible. In \S\ref{subsec:ratio_inconsistency} we will illustrate by numeric simulations that this inconsistency may invalidate some subsequent whitening procedures. However, $\hRb_M^b$ has a weaker ratio LSD consistency \ref{enum:A-weaker-ratio_2} which may still be useful.

It is worthy of noticing that the entries $\hat r_k$ or $\hat r_k^b$, or some other banded/tapered estimators based on them are often used to estimate $r_k$ or the periodogram. Many results have been established in this aspect, see e.g. \cite{dai2004asymptotics,woodroofe1967maximum, liu2010asymptotics, wu2010covariances}. For matrix estimators, Ing et al. \cite{ing2016estimation} established the norm consistency of an estimator for $\Rb_M^{-1}$ under some special LSD conditions. However, to the authors' best knowledge, the ratio consistency of matrix estimators for the purpose of whitening in the presence of LRD has not yet been established before.

We now describe some important technical innovations introduced in this paper as compared to the existing literature for the SRD case. The general structure of the proof of our main theorem, Theorem~\ref{th:main_th} follows \cite{vinogradova2015estimation}. But unlike the reference where the white noise matrix $\Zb$ has i.i.d complex Gaussian entries, we also allow the rows $\zb_i$ of $\Zb$ to be uniformly distributed on a centered sphere in $\R^M$ or $\C^M$. This setting permits the rows of $\Xb$ to have more general elliptical distributions. Thus the $M$ columns of $\Zb$ are uncorrelated but dependent and new tools are needed in various moment estimation involving these noise variables such as
$$\E e^{\tau\sum_m\sigma_m|Z_{1,m}|^2}$$
where $Z_{1,m}, m=1,\dots,M$ are the elements of the first row.

In the proof of Theorem~\ref{th:main_th}, an accurate upper bound for a trace of non commutative product of several Toeplitz matrices (of type $\tr(\Db_M(\theta)\Rb_M\Db_M(\theta)^*\Bb_M)^2$, where $\Rb_M, \Bb_M$ are Toeplitz, and $\Db_M(\theta)$ is diagonal depending on $\theta\in(0,2\pi)$ ) is crucial. In \cite{vinogradova2015estimation}, a global bound was obtained using linear algebraic method thanks to the boundedness of $\|\Rb_M\|$. In this paper, since $\|\Rb_M\|\to \infty$, the global bound is not sufficient. We rewrite the same trace as a two dimensional harmonic integral and then estimate its bound in terms of $M$ and the underlying spectral density $f(\theta)$. This is not trivial since $f$ has a power singularity at $0$. 

\subsection*{Notations.} Matrices are denoted by bold capital characters, row or column vectors are denoted by bold characters. For $x\in\R$, $\delta_x$ denotes the Dirac measure at $x$. For a Hermitian $N\times N$ matrix $\Sb$, its eigenvalues are denoted as $\lambda_1(\Sb)\ge \dots \ge \lambda_N(\Sb)$, and $\mu^{\Sb}:=N^{-1}\sum_{i=1}^N\delta_{\lambda_i(\Sb)}$ denotes the ESD of $\Sb$. The largest and smallest eigenvalues of $\Sb$ are also denoted by $\lmax(\Sb)$ and $\lmin(\Sb)$, respectively. For a matrix $\Ab$, $A_{i,j}$ stands for its $i$th row and $j$th column element, $\|\Ab\|$ and $\|\Ab\|_F$ its operator norm and Frobenius norm, respectively, and $\Ab^*$ its conjugate transpose. The spectrum of a square matrix $\Ab$ is denoted by $\spec(\Ab)$. For a function $f$, $\|f\|_1$ and $\|f\|_\infty$ stand for its $L^1$ and $L^\infty$ norm, respectively. If $a,b$ are two elements of a Hilbert space, we denote their inner product as $\langle a, b\rangle$. The symbol $K$ denotes a constant which may take different values from one place to another. If several constants are needed in one expression, we will denote them by $K_1,K_2,\dots$. For two sequences of positive numbers $a_n$ and $b_n$, $a_n \lesssim (\gtrsim) b_n$ means that there exists a constant $K>0$ such that $a_n\le (\ge) Kb_n$ for all $n$, and $a_n \asymp b_n$ means that there exist constants $0<K_1<K_2$ such that $K_1b_n \le a_n \le K_2 b_n$ for all $n$. The underlying constants may depend on the spectral density $f$ defined in the assumption \ref{ass:spec_den_behav_0} below, but do not depend on any other variables in this article. The notation $a_n\gg b_n$ (resp. $a_n\ll b_n$) means that $a_n/b_n\to\infty$ (resp. $a_n/b_n\to 0$).

\subsection*{Organization}
In \S\ref{sec:model_main_results}, we state the main results. In \S\ref{sec:applications}, we develop applications to signal detection and high-dimensional PCA for the data matrix $\Xb$. In \S\ref{sec:num_illustrations}, we provide in \ref{subsec:simu_norm_inconsistency} numeric simulations to show the norm inconsistency of $\hRb_M$, and to illustrate of the ratio inconsistency of $\hRb_M^b$ with its impact to the whitening procedure in \ref{subsec:ratio_inconsistency}. We prove our main results Theorem~\ref{th:main_th} and Proposition~\ref{prop:inconsis_Rb} in \S\ref{sec:proof_main_th} and \S\ref{sec:proof_inconsis_Rb}, respectively. The other proofs are given in the appendices.

\section{Main results} \label{sec:model_main_results}
\subsection{Model and assumptions}
We consider the random data matrix $\Xb$ in \eqref{eq:def_X} with the following assumptions.
\begin{enumerate}[leftmargin=*, label={\bf A\arabic{enumi}}]
    \item \label{ass:Gaussian} \label{ass:1st} The rows $\zb_i$ of $\Zb$ are i.i.d real or complex random vectors, either standard normal distributed, or distributed as $\sqrt{M}\ub$ where $\ub$ follows the uniform (Haar) measure on the unit sphere in $\R^M$ or $\C^M$.
    \item \label{ass:C_moments} \label{ass:C_bound}\label{ass:C_diag} The matrices $\Cb_N$ are nonnegative and diagonal, i.e. $\Cb_N=\diag(c_1,\dots,c_N)$ where $c_n\ge 0$ and $c_n$ may also depend on $N$ for $n=1,\dots,N$. Moreover there exist constants $C>0$ and $\kappa>0$ such that
    $$\frac{1}{N}\tr\Cb_N^2 \le C,\quad \|\Cb_N\|\le \kappa\log M.$$ 
    \item \label{ass:spectral_density}\label{ass:1st_bias} The Toeplitz matrices $\Rb_M=(r_{i-j})_{i,j=1}^M$ have a positive spectral density $f\in L^1(-\pi,\pi)$ which is bounded in any set of the form $[-\pi, \pi]\backslash (-\delta,\delta)$ with $\delta>0$.
    \item \label{ass:spectral_bound_below} The spectral density $f$ is bounded away from $0$:
    $$\essinf_{\theta\in(-\pi,\pi)}f(\theta)>0.$$
    \item \label{ass:spec_den_behav_0} The spectral density $f$ is even and has the following asymptotic behavior near $0$:
    $$f(x)=\frac{L(|x|^{-1})}{|x|^a}$$
    for $x\in[-\pi,\pi]\backslash\{0\}$ where $a\in(0,1)$ and $L$ defined in $[\pi^{-1},\infty)$ is a slowly varying function at $\infty$. 
    \label{ass:last}
\end{enumerate}

The assumptions on $\Zb$ and $\Cb_N$ allow the matrix $\Xb$ to cover two types of models. When the matrix $\Zb$ has i.i.d standard real (resp. complex) Gaussian entries and $\Cb_N$ is a real symmetric (resp. complex Hermitian) deterministic matrix, each column of $\Xb$ is distributed as $\gN(0,\Cb_N)$ and the correlation between two columns $\xb_i,\xb_j$ is $\gamma(i-j)\Cb_N$, which is the product of a scalar $\gamma(i-j)$ depending only on the difference of their indices $i-j$ and a fixed matrix $\Cb_N$. Then $\Xb$ represents a $N$-dimensional stationary Gaussian process with a separable correlation structure. Note that in this case, the multivariate process $\Xb$ is a linear transform by $\Cb_N^{1/2}$ of $N$ i.i.d samples of a univariate stationary process. By the orthogonal (resp. unitary) invariance of the columns in $\Xb$, we can assume that $\Cb_N$ is diagonal without modifying the distribution of $\Sb_X$, $\hRb_M$ and $\hRb_M^b$ defined in \eqref{eq:intro:sample_covariance}-\eqref{eq:intro:toepl_averaging}. When the rows $\zb_i$ are distributed as $\sqrt{M}\ub$ where $\ub^\tran$ follows the uniform (Haar) measure on the unit sphere in $\R^M$ or $\C^M$, and $\Cb_N=\diag(\nu_1,\dots,\nu_N)$ with $\nu_i$ some i.i.d nonnegative random variables, independent of $\zb_i$, we can write the rows of $\Xb$ as
$$\xb_i=\sqrt{\nu_i}\zb_i\Rb_M^{1/2}.$$
Then $\xb_i^\tran$ has an elliptical distribution, and the data matrix $\Xb$ represents a set of i.i.d samples of elliptical random vectors with Toeplitz scatter matrix $\Rb_M$. Because $\Cb_N$ is independent of $\Zb$, we can treat $\Cb_N$ as deterministic by standard conditioning arguments, and our results are still applicable.

Recall that the spectral density of a sequence of Toeplitz matrices $\Rb_M=(r_{i-j})$ is a function $f\in L^1(-\pi,\pi)$ whose Fourier coefficients are $r_k$:
$$r_k=\frac{1}{2\pi}\int_{-\pi}^\pi f(x)e^{-\im k x}\dd x.$$
If $f$ is real, then $\Rb_M$ is Hermitian; if $f$ is positive, then $\Rb_M$ is positive definite; if $f$ is positive and even, then $\Rb_M$ is real symmetric and positive definite. We will consider $f$ as a $2\pi$-periodic function so that $f(x)$ is well defined by periodicity for all real $x$. Note that the assumption \ref{ass:spectral_bound_below} ensures that the smallest eigenvalue of $\Rb_M$ is positive and bounded away from $0$, thus $\Rb_M$ is invertible for all $M$ with $\|\Rb_M^{-1}\|$ bounded. If $\Rb_M$ is the autocovariance matrix of a stationary process and satisfies \ref{ass:spectral_density}, \ref{ass:spec_den_behav_0}, then the process is LRD by \cite[Definition~2.1.5 (Condition~IV)]{pipiras2017long}. 

Recall the definition of $\Sb_X$ in \eqref{eq:intro:sample_covariance}, the two estimators $\hRb_M$ and $\hRb^b_M$ in \eqref{eq:intro:S_toeplitzified}-\eqref{eq:intro:toepl_averaging}, and $\xi_N=N^{-1}\tr\Cb_N$. 

\subsection{Consistency properties of the unbiased estimator $\hRb_M$}
In this subsection we study the consistency of the unbiased estimator $\hRb_M$. We first point out that using a norm bound of $\Rb_M$ given in Lemma~\ref{lem:norm_toeplitz} below, and simply adapting the proof of \cite[Theorem~2]{vinogradova2015estimation}, one can prove the following large deviation bound.
\begin{prop}Assume that \ref{ass:Gaussian},\ref{ass:spectral_density} and \ref{ass:spec_den_behav_0} hold with $\Zb$ having i.i.d complex Gaussian entries, $\Cb_N$ bounded in spectral norm, and $0<a<1/2$. Also assume that $N/M\to c\in\positR$. Then there exists a constant $K>0$, such that for any fixed $x>0$, we have
\begin{equation}
    \P\left(\left\|\hRb_M-\xi_N\Rb_M\right\| >x \right) \le \exp\left(-\frac{KcM^{1-2a}x^2}{\|\Cb_N\|^2 L^2(M)\log M}(1+o(1))\right). \label{eq:large_dev_normsp}
\end{equation}
where $o(1)$ is with respect to $M$ and depends on $x$.
\end{prop}
\begin{proof}
By checking carefully the proof of \cite[Theorem~2]{vinogradova2015estimation}, we note that we can adapt it by replacing all the occurrences of the infinite-norm of the spectral density ($\|\boldsymbol{\Upsilon}\|_\infty$ in \cite{vinogradova2015estimation}) with the spectral norm of the matrix $\|\Rb_M\|$, {
and also re-analyzing the contribution of the term $\|\Rb_M\|$ where this term was previously bounded in \cite{vinogradova2015estimation}}. We also need to consider the contribution of $\Cb_N$, which will introduce the factor $\|\Cb_N\|^2$ in the denominator of the exponential bound there. Therefore, we have the following estimate:  
\begin{equation}
    \P\left(\left\|\hRb_M-\xi_N\Rb_M\right\| >x \right) \le \exp\left(-\frac{KNx^2}{\|\Cb_N\|^2\|\Rb_M\|^2\log M}(1+o(1))\right).
\end{equation}
Taking into account the bound $\|\Rb_M\|\asymp M^aL(M)$ given in Lemma~\ref{lem:norm_toeplitz} below, the result follows. The details are omitted.
\end{proof}
This proposition implies that in some LRD cases, where $\Rb_M$ satisfies \ref{ass:spec_den_behav_0} with $0<a<1/2$, the unbiased estimator $\hRb_M$ is still norm consistent. However, this result has the following defects. Firstly, the result does not cover the case $1/2\le a<1$ (In fact we conjecture that in this case the norm consistency does not hold. This will be supported by simulations with an heuristic argument in \S\ref{subsec:simu_norm_inconsistency}). However in some applications such as whitening, the convergence of the ratio $\Rb_M^{1/2}\hRb_M^{-1}\Rb_M^{1/2}$ to some scaled identity $\xi \Ib$ suffices. Secondly, even for the norm consistent case, we can see that the convergence rate ensured by this proposition is no better than $\sqrt{\log(M)}L(M)/M^{1/2-a}$, which depends on $a$ and gets worse and worse when $a$ approaches $1/2$. Meanwhile, since $\Rb_M^{-1}$ is merely bounded under \ref{ass:spectral_bound_below}, the proposition provides the same convergence rate for the ratio $\Rb_M^{-1/2}\hRb_M\Rb_M^{-1/2}$.

These facts motivate us to establish the following large deviation bound for the ratio. 
\begin{Th}\label{th:main_th} Under the assumptions \ref{ass:1st} - \ref{ass:last}, for any sequence $x_M$ satisfying $M^{-\gamma}\lesssim x_M\lesssim C/\kappa$ for some $\gamma>0$, there exist $K>0$, such that for large enough $M$ and any $N\ge 1$, we have
\begin{equation}
    \P\left(\left\|\Rb_M^{-1/2}\hRb_M\Rb_M^{-1/2}-\xi_N\Ib\right\| >x_M \right) \le 2\exp\left(- \frac{Nx_M^2}{CK\log^2 M}+\beta\log M\right). \label{eq:main_result_large_dev}
\end{equation}
where $\beta$ is any integer larger than $2+a+\gamma$.
\end{Th}

\begin{Rq} The inequality \eqref{eq:main_result_large_dev} literally holds for large enough $M$ and all $N\ge 1$. Meaningful asymptotic results can be obtained  by considering $M,N\to \infty$ with different regimes. We leave the freedom of specifying a precise exploding speed of $M,N$ to specific context of applications. 
For example, in order to get the almost sure convergence
\begin{equation}
    \left\|\Rb_M^{-1/2}\hRb_M\Rb_M^{-1/2}-\xi_N\Ib\right\|\xrightarrow[]{\as}0, \label{eq:main_result_as_conv}
\end{equation}
we should assume $M\to\infty$ and $N=N(M)\gg \log^3 M$.

As another example, if the target is some precise convergence rate, we will consider small  $x_M$ such as  $x_M=A(\log^\frac{3}{2} M)/\sqrt{N}$ with a large enough constant $A$.
Then from \eqref{eq:main_result_large_dev}, we have
\begin{equation}
    \P\left(\left\|\Rb_M^{-1/2}\hRb_M\Rb_M^{-1/2}-\xi_N\Ib\right\| >x_M \right) \le 2M^{-(\frac{A^2}{CK}-\beta)}. \label{eq:proba_ineq:cvrate}
\end{equation}
So if we take {
$N=N(M)\lesssim M^{2\gamma}$ for some $\gamma>0$ (which is required by the statement of Theorem~\ref{th:main_th}) and a large enough $A$ (to make the RHS of \eqref{eq:proba_ineq:cvrate} summable), almost surely, for large enough $M$, }
\begin{equation}\left\|\Rb_M^{-1/2}\hRb_M\Rb_M^{-1/2}-\xi_N\Ib\right\|\le \frac{A\log^{\frac{3}{2}} M}{\sqrt{N}}.
\end{equation}
Of course, if we want this bound to vanish, we need $N\gg \log^3M$.

From this theorem, in order to accurately estimate  the autocovariance matrix $\Rb_M$, we need that the dimension $N$ is also large enough. This is natural since in the Gaussian case, $N$ is the number of i.i.d. copies of a univariate stationary process (the rows of $\Zb\Rb_M^{1/2}$) we used to construct  $\Xb_M$ via the  linear transform $\Cb_N^{1/2}$.
\end{Rq}

As a corollary of Theorem~\ref{th:main_th}, when $N,M$ are of the same order, the matrix $\hRb_M^{1/2}\Rb_M^{-1/2}$ is equivalent to $\sqrt{\xi_N}\Ib$. This fact is useful when we want to rebuild the uncorrelated data $\Yb=\Cb^{1/2}_N\Zb$ by whitening $\Rb_M^{1/2}$.
\begin{corol}\label{corol:conv_1/2power} Under the same assumptions as in Theorem~\ref{th:main_th}, assume moreover that $\xi_N$ is lower bounded from $0$. Then as $M,N\to\infty$ with $N \asymp M$,  we have almost surely
\begin{equation}
    \left\|\hRb_M^{1/2}\Rb_M^{-1/2}-\sqrt{\xi_N}\Ib\right\|\to 0. \label{eq:conv_1/2power}
\end{equation}
\end{corol}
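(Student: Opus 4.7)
The plan is to derive the corollary from Theorem~\ref{th:main_th} by exploiting the operator monotonicity of $x\mapsto x^{1/2}$ (the Löwner-Heinz inequality). Put $\epsilon_{M,N} := \|\Rb_M^{-1/2}\hRb_M\Rb_M^{-1/2}-\xi_N\Ib\|$. The large deviation bound \eqref{eq:main_result_large_dev} combined with Borel-Cantelli yields $\epsilon_{M,N} = O(\log^{3/2}M/\sqrt{N})$ almost surely. Conjugating the two-sided estimate $-\epsilon_{M,N}\Ib \le \Rb_M^{-1/2}\hRb_M\Rb_M^{-1/2}-\xi_N\Ib \le \epsilon_{M,N}\Ib$ by $\Rb_M^{1/2}$ gives, in the Löwner order,
\[
(\xi_N-\epsilon_{M,N})\Rb_M \le \hRb_M \le (\xi_N+\epsilon_{M,N})\Rb_M.
\]
Since $\xi_N$ is bounded away from $0$ by hypothesis, $\epsilon_{M,N} < \xi_N$ eventually a.s.; applying Löwner-Heinz with $r=1/2$ to both inequalities then yields
\[
\sqrt{\xi_N-\epsilon_{M,N}}\,\Rb_M^{1/2} \le \hRb_M^{1/2} \le \sqrt{\xi_N+\epsilon_{M,N}}\,\Rb_M^{1/2}.
\]

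Subtracting $\sqrt{\xi_N}\Rb_M^{1/2}$, with $c_{M,N} := \max\!\bigl(\sqrt{\xi_N}-\sqrt{\xi_N-\epsilon_{M,N}},\,\sqrt{\xi_N+\epsilon_{M,N}}-\sqrt{\xi_N}\bigr) = O(\epsilon_{M,N})$, we obtain $-c_{M,N}\Rb_M^{1/2} \le \hRb_M^{1/2}-\sqrt{\xi_N}\Rb_M^{1/2} \le c_{M,N}\Rb_M^{1/2}$. Evaluating the corresponding quadratic forms on arbitrary unit vectors and noting that $\hRb_M^{1/2}-\sqrt{\xi_N}\Rb_M^{1/2}$ is Hermitian, we deduce $\|\hRb_M^{1/2}-\sqrt{\xi_N}\Rb_M^{1/2}\| \le c_{M,N}\|\Rb_M^{1/2}\|$. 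Right-multiplication by $\Rb_M^{-1/2}$, whose norm is bounded thanks to \ref{ass:spectral_bound_below}, yields
\[
\|\hRb_M^{1/2}\Rb_M^{-1/2}-\sqrt{\xi_N}\Ib\| \le c_{M,N}\,\|\Rb_M^{1/2}\|\,\|\Rb_M^{-1/2}\|.
\]

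It remains to verify this upper bound tends to $0$ almost surely. A standard Toeplitz estimate under \ref{ass:spec_den_behav_0} gives $\|\Rb_M\| = O(M^a L(M))$ (by splitting $v^*\Rb_M v = (2\pi)^{-1}\!\int f|\hat v|^2$ at $|x|\asymp 1/M$ and using the $L^\infty$ bound $\|\hat v\|_\infty \le \sqrt{M}$), while $\|\Rb_M^{-1}\|$ is bounded by \ref{ass:spectral_bound_below}. Together with $c_{M,N} = O(\log^{3/2}M/\sqrt{N})$ and $N \asymp M$, the bound reduces to $O(M^{(a-1)/2}L^{1/2}(M)\log^{3/2}M)$, which vanishes since $a\in(0,1)$ and $L$ is slowly varying. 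The main subtlety is the unboundedness of $\|\Rb_M\|$ in the LRD regime: the cruder operator inequality $\|A^{1/2}-B^{1/2}\|^2\le\|A-B\|$ would only handle $a<1/2$, whereas Löwner-Heinz produces a bound controlled by the square root of the condition number $\|\Rb_M\|\,\|\Rb_M^{-1}\|$ rather than by $\|\Rb_M\|$ itself, which is what pushes the argument through for the full range $a\in(0,1)$.
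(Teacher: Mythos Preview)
Your argument is correct and takes a genuinely different route from the paper's. The paper factors
\[
\hRb_M^{1/2}\Rb_M^{-1/2}-\sqrt{\xi_N}\Ib=\Rb_M^{1/4}\bigl(\Rb_M^{-1/4}\hRb_M^{1/2}\Rb_M^{-1/4}-\sqrt{\xi_N}\Ib\bigr)\Rb_M^{-1/4},
\]
observes that the middle factor has the same eigenvalues as $\hRb_M^{1/2}\Rb_M^{-1/2}$, and then sandwiches those eigenvalues between the extremal singular values $\sqrt{\lambda_{\min/\max}(\Rb_M^{-1/2}\hRb_M\Rb_M^{-1/2})}$; this gives the middle factor size $O(M^{-1/2+\varepsilon})$, and the outer factors contribute only $\|\Rb_M\|^{1/4}$. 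Your approach replaces this eigenvalue--singular value comparison by the operator monotonicity of the square root (L\"owner--Heinz), which yields a clean two-sided L\"owner bound $\hRb_M^{1/2}-\sqrt{\xi_N}\Rb_M^{1/2}$ between $\pm c_{M,N}\Rb_M^{1/2}$ directly. The trade-off is in the rates: the paper's bound scales like $\epsilon_{M,N}\|\Rb_M\|^{1/4}$ and hence vanishes with room to spare, while yours scales like $\epsilon_{M,N}\|\Rb_M\|^{1/2}=O(M^{(a-1)/2}L^{1/2}(M)\log^{3/2}M)$, which still vanishes for all $a\in(0,1)$ but is tighter against the boundary $a=1$. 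In exchange your argument is more transparent and avoids the somewhat ad hoc similarity/singular-value step.
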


Using the ratio consistency of $\hRb_M$, we develop a consistent whitening procedure on $\Xb$, which is further applied to signal detection, noise reduction and data compression. See \S\ref{sec:applications}.

When $\Cb_N$ is random and independent of $\Zb$ such that \ref{ass:C_moments} is almost surely satisfied for large enough $M,N$, the almost sure convergence \eqref{eq:main_result_as_conv} also holds. For example, if $\Cb_N=\diag(\nu_1,\dots, \nu_N)$ with $(\nu_i)_{i\in\N}$ a sequence of i.i.d sub-exponential (in the sense that $\P(|\nu_1|>t)\le K_1e^{-t/K_2}$ with some $K_1,K_2>0$ for any $t>0$) positive random variables satisfying $\E|\nu_1|^2=1$, then
$$\frac{1}{N}\sum_{i=1}^N\nu_i^2\xrightarrow[N\to\infty]{}1,$$
and there exists $\kappa>0$ such that almost surely for large enough $N$,
$$\max_{1\le i\le N}\{\nu_i\}\le \kappa\log N.$$
In this case \eqref{eq:main_result_as_conv} holds as $M,N\to\infty$ with $N\asymp M$.

\subsection{Consistency properties of the biased estimator $\hRb_M^b$}
In the SRD case, the biased estimator $\hRb^b_M$ has several advantages over $\hRb_M$. Firstly, it is structurally positive semi-definite (see Lemma~3 of \cite{vinogradova2015estimation}). Secondly, it has smaller deviation from its expectation (the fluctuation rate is lower than $N^{-\alpha}$ for any $\alpha<1$, see \cite{couillet2015rmt}), because the inaccuracy of the elements near the top-right and bottom-left corners is more reduced in $\hRb_M^b$ than in $\hRb_M$. 

However in the LRD case, $\hRb^b_M$ is no longer consistent with $\Rb_M$, even in the sense of ratio consistency. In fact, $\hRb_M^b$ is ratio consistent with its expectation
$$\Rb_M^b:=\E\hRb_M^b=\left(\left(1-\frac{|i-j|}{M}\right)r_{i-j}\right)_{i,j=1}^M,$$
whose difference from $\Rb_M$ is no longer negligible in the LRD case. This is precisely established below. 

\begin{prop}\label{prop:inconsis_Rb} Let $\Xb$ be defined in \eqref{eq:def_X} with \ref{ass:1st} - \ref{ass:last} hold. Suppose that $\xi_N=N^{-1}\tr\Cb_N=1$. Then as $M\to\infty$ with $N=N(M) \gg \log^3 M$, almost surely, $\hRb^b_M$ is ratio consistent with $\Rb_M^b$:
\begin{equation}
    \|(\Rb_M^b)^{-1/2}\hRb^b_M(\Rb_M^b)^{-1/2}-\Ib\|\to 0, \label{eq:ratio_consist_hRbRb}
\end{equation}
but not with $\Rb_M$:
\begin{equation}
    \|\Rb_M^{-1/2}\hRb^b_M\Rb_M^{-1/2}-\Ib\|\not\to 0. \label{eq:inconsistency}
\end{equation}
\end{prop}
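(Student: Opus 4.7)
The plan is to first prove the ratio consistency \eqref{eq:ratio_consist_hRbRb} by adapting the proof of Theorem~\ref{th:main_th}, and then to deduce the ratio inconsistency \eqref{eq:inconsistency} from it by an explicit test-vector computation with the all-ones vector.

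For \eqref{eq:ratio_consist_hRbRb}, I would rerun the proof of Theorem~\ref{th:main_th} verbatim, replacing $\Rb_M$ by $\Rb_M^b$ and the weight matrix $\Bb_M=((M-|i-j|)^{-1})_{i,j=0}^{M-1}$ by its biased counterpart $\Bb_M^b=(1/M)_{i,j=0}^{M-1}=\frac{1}{M}\mathbf{1}\mathbf{1}^*$. The Toeplitz symbol of $\Rb_M^b$ is the Fej\'er-smoothed density $\Upsilon_M^b=(f*F_M)/(2\pi)$, which is nonnegative, uniformly bounded below by $\essinf f$ thanks to \ref{ass:spectral_bound_below}, and comparable to $f$ on $[-\pi,\pi]\setminus(-1/M,1/M)$ by standard properties of the Fej\'er kernel. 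Because $\Bb_M^b$ has rank one, the analogue of the delicate bound on $\tr\Qb_M^2(\theta)/f^2(\theta)$ becomes trivial:
$$\tr\bigl(\Qb_M^b(\theta)\bigr)^2 \;=\; \tfrac{1}{M^2}\bigl(\mathbf{1}^*\Db_M^*(\theta)\Rb_M\Db_M(\theta)\mathbf{1}\bigr)^2 \;=\; \bigl(\Upsilon_M^b(\theta)\bigr)^2,$$
so $\var\hUpsilon_M^b(\theta)\lesssim(\Upsilon_M^b(\theta))^2/N$ with no $\log^2M$ loss. The remaining steps of Theorem~\ref{th:main_th} (concentration on a polynomial $\theta$-grid, bias control, and the passage from uniform control of $\hUpsilon_M^b(\theta)/\Upsilon_M^b(\theta)$ to the operator-norm control of the matrix ratio $(\Rb_M^b)^{-1/2}\hRb_M^b(\Rb_M^b)^{-1/2}$) translate verbatim, yielding the large-deviation bound and hence \eqref{eq:ratio_consist_hRbRb}.

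For \eqref{eq:inconsistency}, write almost surely $(\Rb_M^b)^{-1/2}\hRb_M^b(\Rb_M^b)^{-1/2}=\Ib+\Delta_M$ with $\|\Delta_M\|\to 0$. Pick the test vector $\vb_M=\mathbf{1}_M/\sqrt M$, set $\mathbf{w}_M=\Rb_M^{1/2}\vb_M/\sqrt{\vb_M^*\Rb_M\vb_M}$ (a unit vector), and $\mathbf{w}_M'=(\Rb_M^b)^{1/2}\Rb_M^{-1/2}\mathbf{w}_M$. Sandwiching gives
$$\mathbf{w}_M^*\Rb_M^{-1/2}\hRb_M^b\Rb_M^{-1/2}\mathbf{w}_M \;=\; \|\mathbf{w}_M'\|^2+O\bigl(\|\Delta_M\|\,\|\mathbf{w}_M'\|^2\bigr),\qquad \|\mathbf{w}_M'\|^2 \;=\; \frac{\vb_M^*\Rb_M^b\vb_M}{\vb_M^*\Rb_M\vb_M}.$$
Expanding the entries of $\Rb_M$ and $\Rb_M^b$,
$$\vb_M^*\Rb_M\vb_M = \sum_{|k|<M}\bigl(1-\tfrac{|k|}{M}\bigr)r_k,\qquad \vb_M^*\Rb_M^b\vb_M = \sum_{|k|<M}\bigl(1-\tfrac{|k|}{M}\bigr)^2 r_k.$$
Under the equivalent time-domain LRD condition \eqref{eq:condition_II}, $r_k\sim\tilde L(|k|)(1+|k|)^{a-1}$, and Karamata's theorem together with the Beta-integral identities $\int_0^1(1-t)t^{a-1}\dd t=1/(a(a+1))$ and $\int_0^1(1-t)^2t^{a-1}\dd t=2/(a(a+1)(a+2))$ yield
$$\vb_M^*\Rb_M\vb_M \sim \frac{2M^a\tilde L(M)}{a(a+1)},\qquad \vb_M^*\Rb_M^b\vb_M \sim \frac{4M^a\tilde L(M)}{a(a+1)(a+2)}.$$
Hence $\|\mathbf{w}_M'\|^2\to 2/(a+2)\in(2/3,1)$ for $a\in(0,1)$, and
$$\mathbf{w}_M^*\bigl(\Rb_M^{-1/2}\hRb_M^b\Rb_M^{-1/2}-\Ib\bigr)\mathbf{w}_M \;\longrightarrow\; \frac{2}{a+2}-1 \;=\; -\frac{a}{a+2} \;\neq\; 0,$$
which forces $\|\Rb_M^{-1/2}\hRb_M^b\Rb_M^{-1/2}-\Ib\|\ge a/(a+2)+o(1)$, proving \eqref{eq:inconsistency}.

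The main obstacle is the adaptation step for \eqref{eq:ratio_consist_hRbRb}: although the rank-one structure of $\Bb_M^b$ simplifies the variance estimate dramatically, one must still carefully verify that the Fej\'er-smoothed symbol $\Upsilon_M^b$ plays the same role as $f$ in the Szeg\H o-type interpolation/grid step of Theorem~\ref{th:main_th}, in particular that it is uniformly bounded below and comparable to $f$ at the relevant scales, so that pointwise control of $\hUpsilon_M^b(\theta)/\Upsilon_M^b(\theta)$ translates into operator-norm control of the matrix ratio. Once \eqref{eq:ratio_consist_hRbRb} is in hand, the deduction of \eqref{eq:inconsistency} is a short and explicit Karamata computation on the all-ones vector and involves no essential difficulty.
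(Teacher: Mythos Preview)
Your argument for \eqref{eq:ratio_consist_hRbRb} is essentially the paper's: both exploit that $\Bb_M^b=\frac{1}{M}\mathbf{1}\mathbf{1}^*$ has rank one, so $\Qb_M^b(\theta)$ has a single nonzero eigenvalue equal to $\Upsilon_M^b(\theta)$, which collapses the variance estimate and makes the discretization step go through as in Theorem~\ref{th:main_th}.

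For \eqref{eq:inconsistency} your route is genuinely different from the paper's. The paper first proves the abstract fact that ratio consistency forces $\lmax(\hRb_M^b)/\lmax(\Rb_M)\to 1$, then uses \eqref{eq:ratio_consist_hRbRb} to reduce to showing $\lmax(\Rb_M^b)/\lmax(\Rb_M)\not\to 1$, and finally identifies this limit as $\lambda_1(\K^b)/\lambda_1(\K)<1$ via Widom--Shampine-type operator convergence (with the machinery of \cite[Theorem~2.3]{tian2019joint} to handle a general slowly varying $L$). Your test-vector computation with $\vb_M=M^{-1/2}\mathbf{1}$ is more elementary and yields an explicit lower bound $a/(a+2)$ for the limiting operator norm; in the operator picture your ratio $2/(a+2)$ is exactly $\langle 1,\K^b 1\rangle/\langle 1,\K 1\rangle$. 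This is a nice shortcut.

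There is, however, one gap in generality. You invoke the time-domain condition \eqref{eq:condition_II} as ``equivalent'' to \ref{ass:spec_den_behav_0}, but the paper explicitly notes that this equivalence requires the slowly varying function to be quasi-monotone, which is not part of \ref{ass:1st}--\ref{ass:last}. Under \ref{ass:spec_den_behav_0} alone you are not entitled to $r_k\sim\tilde L(|k|)|k|^{a-1}$, so the Karamata step as written does not cover the full statement. The fix is to compute your two quadratic forms directly from \ref{ass:spec_den_behav_0}: since $\vb_M^*\Rb_M\vb_M=\Upsilon_M^b(0)=(2\pi)^{-1}\int f\,F_M$ and $\vb_M^*\Rb_M^b\vb_M=(2\pi)^{-1}\int \Upsilon_M^b\,F_M$, a Fej\'er-kernel/scaling computation (or the same operator-limit technique the paper uses for $\lmax$, applied instead to the constant vector) gives $\vb_M^*\Rb_M\vb_M/(M^aL(M))\to c\langle 1,\K 1\rangle$ and $\vb_M^*\Rb_M^b\vb_M/(M^aL(M))\to c\langle 1,\K^b 1\rangle$, recovering the same ratio $2/(a+2)$ without \eqref{eq:condition_II}.
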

Note that the inconsistency \eqref{eq:inconsistency} is a special phenomenon caused by LRD, because in the SRD case, as long as \ref{ass:spectral_bound_below} holds, $\|\Rb_M\|$ and $\|\Rb_M^{-1}\|$ are both bounded, then the ``ratio consistency'' and the ``norm consistency'' are equivalent.

The proof of Proposition~\ref{prop:inconsis_Rb} also shows that the inconsistency \eqref{eq:inconsistency} caused by LRD affects not only the biased estimator $\hRb_M^b$, but more generally a large class of tapered estimators of $\Rb_M$. Analogous to $\hRb_M^b$, we often taper the estimates of $r_k$ for values of $k$ close to $M-1$ in order to reduce the inaccuracy of these estimates. But in the LRD case, such tapering often modifies the asymptotic behavior of the largest eigenvalue of the resulting estimator for $\Rb_M$, which in turn destroys its ratio consistency, see the proof of Proposition~\ref{prop:inconsis_Rb}.

Despite this ratio inconsistency of $\hRb_M^b$ with $\Rb_M$, we find that only a small part of the eigenvalues of $\Rb_M^{-1}\hRb_M^b$ deviate from $1$. In fact, we will establish the ratio LSD consistency between the two matrices.

\begin{prop}\label{prop:lsd_consistent} Let $\Xb$ be defined in \eqref{eq:def_X} with \ref{ass:1st}-\ref{ass:last} hold. Suppose that $\xi_N\to 1$ as $N\to\infty$. Then as $M\to\infty$ with $N=N(M)\gg \log^3 M$, almost surely the ESD of $\Rb_M^{-1}\hRb_M^b$ converges weakly to $\delta_1$.
\end{prop}

Thanks to the ratio LSD consistency of $\hRb_M^b$ with $\Rb_M$, $\hRb_M^b$ can still serve as a good approximation for $\Rb_M$ in certain circumstances, see \S\ref{subsec:ratio_inconsistency}.

We end this section by recapitulating the consistency properties of estimators $\hRb_M$ and $\hRb_M^b$ with $\Rb_M$, in the cases of SRD and LRD, respectively.

\begin{table}[ht]
\centering
\caption{Recapitulation of consistency properties}
\begin{tabular}{lll}
\toprule
 & SRD & LRD \\
 \hline
\multirow{4}{*}{$\hRb_M$} & \multirow{2}{*}{norm consistent} & norm consistent when $0<a<1/2$ \\
& & *norm inconsistent when $1/2<a<1$ \\
 & ratio consistent &  ratio consistent \\
 & ratio LSD consistent & ratio LSD consistent \\
\hline
\multirow{3}{*}{$\hRb_M^b$} & norm consistent & norm inconsistent  \\
    & ratio consistent &  ratio inconsistent \\
    & ratio LSD consistent & ratio LSD consistent \\
\bottomrule 
\multicolumn{3}{l}{* is only supported by numerical studies. See Section~\ref{subsec:simu_norm_inconsistency}.}
\end{tabular}
\label{table:recapitulation_inconsistency}
\end{table}

\section{Applications to matrix data whitening} \label{sec:applications}
Suppose that we have observed a data matrix $\Xb=\Cb_N^{1/2}\Zb\Rb_M^{1/2}$ with $M,N$ large but of the same order, and we want to detect the spike eigenvalues and the associated eigenvectors of $\Cb_N$. If $\Rb_M$ is identity, the data matrix becomes $\Yb=\Cb_N^{1/2}\Zb$, and from RMT results, we can find the spike eigenvalues (usually the extreme ones in applications) of 
$$\Sb_Y=\frac{1}{M}\Yb\Yb^*=\frac{1}{M}\Cb_N^{1/2}\Zb\Zb^*\Cb_N^{1/2},$$
and calculate the spike eigenvalues of $\Cb_N$ using the formula relating the spike eigenvalues of $\Cb_N$ and $\Sb_Y$, see \cite[Chapter~11]{yao2015sample} for more details. However, for general matrix $\Rb_M$, especially when the underlying stationary process is LRD, the above method fails because the information relevant to $\Cb_N$ is mixed with the covariance matrix $\Rb_M$. In fact from \cite[Corollary~2.1]{tian2022joint} we know that if \ref{ass:spectral_density},\ref{ass:spec_den_behav_0} hold and if in addition, the LSD of $\Cb_N$ weakly converges, then for any fixed $m\ge 1$, the $m$ largest eigenvalues of $\Sb_X=M^{-1}\Xb\Xb^*$ are asymptotically equivalent to
$$\frac{\tr\Cb_N}{M} \{ \lambda_1(\Rb_M), \dots, \lambda_m(\Rb_M)\},$$
as $N,M\to\infty$ and $N/M\to c\in\positR$. Thus, even if $\Cb_N$ is bounded, the largest eigenvalues of $\Sb_X$ tend to infinity following those of $\Rb_M$, and only the summary statistic $\tr\Cb_N$ appears in their first order limits. It is thus very difficult, if not impossible, to estimate the number and locations of the spiked eigenvalues, and the associated eigenvectors of $\Cb_N$. This breaks down any PCA on the original data based on $\Cb_N$. 

Using Theorem~\ref{th:main_th} and Corollary~\ref{corol:conv_1/2power}, we can whiten and remove the time correlation $\Rb_M^{1/2}$ from the data matrix $\Xb$ by multiplying it with $\hRb_M^{-1/2}$. Consider the whitened data matrix
$$\Yb_w:=\Xb\hRb_M^{-1/2}$$
and the whitened sample covariance matrix
\begin{equation}
    \Sb_w:=\frac{1}{M}\Yb_w\Yb_w^*=\frac{1}{M}\Cb_N^{1/2}\Zb(\Rb_M^{1/2}\hRb_M^{-1}\Rb_M^{1/2})\Zb^*\Cb_N^{1/2}. \label{eq:Sw_pca}
\end{equation}
Consider also the corresponding matrices with $\Rb_M$ equaling to identity and $\Cb_N$ normalized by $\xi_N$:
\begin{equation}\Yb_{Rid}:=\xi_N^{-1/2}\Yb,\quad \Sb_{Rid}:=\frac{1}{M}\Yb_{Rid}\Yb_{Rid}^*=\frac{1}{\xi_N}\bSb_Y.
\end{equation}
Our theory ensures that the impact of the covariance matrix $\Rb_M$ is properly removed from $\Xb$ so that the matrix $\Sb_w$ is close to $\Sb_{Rid}$ in spectral norm.
\begin{prop}\label{prop:whitening_consistence}
Let \ref{ass:1st}-\ref{ass:last} hold with $\Cb_N$ bounded in spectral norm and $\xi_N=\tr\Cb_N/N\ge \epsilon>0$. Then as $N,M\to\infty$ with $N/M\to c\in\positR$, we have
$$\left\|\Sb_w -\Sb_{Rid}\right\|\xrightarrow{\mathrm{a.s}} 0.$$
\end{prop}
The proposition ensures that many statistical methods for the standard covariance matrix $\Sb_{Rid}=\xi_N^{-1}\Sb_Y$ are applicable to the whitened matrix $\Sb_w$. In the following, we develop two statistical applications of this whitening procedure with the following deformed "signal plus noise" model. 

Let $\Yb=(\yb_1,\dots,\yb_M)$ with
\begin{equation}
    \yb_j=\Ab\mb_j + \sigma \nb_j, \label{eq:def_ybj_pca}
\end{equation}
where $\Ab$ is a $N\times p$ matrix with $p\ge 0$ a fixed integer, $\sigma>0$ and $\mb_j\sim \gNc(0,\Ib_p)$, $\nb_j\sim \gNc(0,\Ib_N)$ are standard complex Gaussian vectors, independent of each other and across $i$. The vectors $\yb_j$ represent antenna data, where $\Ab\mb_j$ represent the signal and $\sigma \nb_j$ the noise. A little calculation shows that the matrix $\Yb_{Rid}$ can be written in the form
\begin{equation}
    \Yb=\Cb_N^{1/2}\Zb, \label{eq:def_Yb_sig}
\end{equation}
where $\Cb_N = \cov(\yb_i)=\Ab \Ab^*+\sigma^2 \Ib_N$, and $\Zb$ is a $N\times M$ matrix with i.i.d standard complex Gaussian entries. Note that in this case $\Cb_N$ are unitarily similar to the diagonal matrix
\begin{equation}
    \sigma^2\diag(\alpha_1,\dots,\alpha_p,1,\dots,1), \label{eq:def_diag_C}
\end{equation}
where $\alpha_i=1+\frac{\lambda_i(\Ab\Ab^*)}{\sigma^2}=:1+\beta_i$ are signal strengths, and $\beta_i:=\frac{\lambda_i(\Ab\Ab^*)}{\sigma^2}$ are signal-to-noise ratios.

Now suppose that the data matrix $\Yb$ is "polluted" during its transmission which takes the form of a LRD time series, and only the matrix
\begin{equation}
    \Xb=\Yb\Rb_M^{1/2} \label{eq:def_X_PCA}
\end{equation}
is observable, where $\Rb_M$ is a Toeplitz matrix satisfying \ref{ass:spectral_density}, \ref{ass:spectral_bound_below}, \ref{ass:spec_den_behav_0}. For the ease of numerical simulations, we let the entries of $\Rb_M$ be
\begin{equation}
    r_{i-j}=\frac{1}{(1+|i-j|)^{1-a}}. \label{eq:def_r_ij_power_a}
\end{equation}
By \cite[Proposition~2.2.14]{pipiras2017long}, $\Rb_M$ satisfies \ref{ass:spectral_density} and \ref{ass:spec_den_behav_0}. By \cite[Theorem~1.5, Chapter~V]{zygmund2002trigonometric}, $\Rb_M$ also satisfies \ref{ass:spectral_bound_below}. Indeed, if the diagonal entry $r_0$ is large enough such that $(r_n)_{n\ge 0}$ is convex, the spectral density of $\Rb_M$ is nonnegative. The minimal value of such $r_0$ is $2^a-3^{a-1}<1$ for $0<a<1$. Thus when we take $r_0=1$, the spectral density $f$ is larger than $1-2^a+3^{a-1}>0$. 

In \S\ref{subsec:detect_number_signals}, we detect the number of spikes in $\Cb_N$, and estimate the signal strength $\alpha_i$. In \S\ref{subsec:PCA}, we proceed a PCA on the data matrix $\Xb$, or on the whitened data $\Yb_w$ to reduce the noise, and the obtained matrices are of rank $p$, realizing a
compression of the original data matrix $\Xb$.

\subsection{Detection of the number of signals and estimation of their strengths $\alpha_i$} \label{subsec:detect_number_signals}
As an immediate application of the asymptotic proximity between $\Sb_w$ and $\Sb_{Rid}$, we propose two algorithms, 1) to identify the number of spikes $p$ from $\Sb_w$, and 2) to estimate the spikes $\alpha_1,\dots,\alpha_p$.  We assume that the spikes $\alpha_1,\dots,\alpha_p$ are simple. The following proposition is the theoretical base of our algorithms. It is a corollary of Proposition~\ref{prop:whitening_consistence} and  \cite[Theorem~2.9, Theorem~11.3]{yao2015sample}.

\begin{prop}\label{prop:Sw_LSD_spike} Let the rows of $\Yb$ be defined in \eqref{eq:def_ybj_pca} and assume that the conditions of Proposition~\ref{prop:whitening_consistence} hold. Then the LSD of $\Sb_w$ is the Mar\v cenko-Pastur distribution
\begin{equation}
    \mathbb{P}_{MP}(\dd\lambda):=\frac{\sqrt{[(\lambda^+-\lambda)(\lambda-\lambda^-)]_+}}{2\pi c\lambda}\ind_{\lambda\in[\lambda^-,\lambda^+]}\dd \lambda +\left(1-c^{-1}\right)\delta_0(\dd\lambda)\ind_{\{c>1\}}, \label{eq:MP_law}
\end{equation} 
where $\lambda^{\pm} = \left(1\pm\sqrt{c}\right)^2$. Furthermore, each $\alpha_i$ larger than $1+\sqrt{c}$ produces a spiked eigenvalue of $\Sb_w$, respectively, converging to
\begin{equation}
    \lambda_i=\alpha_i+c\alpha_i/(\alpha_i-1). \label{eq:asymp_spikes}
\end{equation}
\end{prop}
The proof will be omitted since it can be easily done by noticing that in this particular case,  $\Sb_{Rid}=M^{-1}\tilde\Cb_N^{1/2}\Zb^*\Zb\tilde\Cb_N^{1/2}$, where $\tilde\Cb_N=\frac{\Cb_N}{\xi_N}$ is the normalized matrix unitarily similar to
$$\frac{\diag(\alpha_1,\dots,\alpha_p,1,\dots,1)}{\xi'_N}$$
with 
$$\xi'_N:=\frac{\sum_{i=1}^p\alpha_i+N-p}{N}\xrightarrow[N\to\infty]{} 1.$$

In order to estimate $p$, we calculate the eigenvalues of $\Sb_w$ denoted as
$$\hat\lambda_1\ge \hat\lambda_2\ge \dots \ge \hat\lambda_N.$$
Note that for current model setting, the ratio $\hat\lambda_j/\hat\lambda_{j+1}$ is larger than $1+\varepsilon>1$ for $j<p$, and is close to $1$ for $j\ge p$. We thus choose three numbers $\gamma_1,\gamma_2,\gamma_3$ such that the event 
$$\left\{\frac{\hat\lambda_{p+i}}{\hat\lambda_{p+i+1}}<\gamma_i,\, i=1,2,3\right\}$$
occurs with probability no less than some threshold (e.g. $99.9\%$). We will determine $\gamma_i$ by Monte-Carlo simulation. Then
$$\hat p:=\inf\{k\ge 0 \tq \hat\lambda_{k+1}/\hat\lambda_{k+2}<\gamma_1, \; \hat\lambda_{k+2}/\hat\lambda_{k+3}<\gamma_2, \; \hat\lambda_{k+3}/\hat\lambda_{k+4}<\gamma_3 \}$$
is our proposed estimate for $p$.
\begin{Rq}
Theoretically, we can use
$$\hat p':=\inf\{k\ge 0 \tq \hat\lambda_{k+1}/\hat\lambda_{k+2}<\gamma_1 \}$$
as an estimate of $p$. However even though $\alpha_i$'s are distinct, the corresponding sample eigenvalues $\hat\lambda_i$ can still get so close that the algorithm may terminate prematurely and $\hat p'$ tends to under-estimate the real $p$. The triple tests in $\hat p$ reinforce the robustness of the estimator against such situation.
\end{Rq}

Once we have estimated $p$, we can further estimate the values of $\alpha_1,\dots, \alpha_p$. According to \eqref{eq:asymp_spikes}, a rough estimation is given by
\begin{equation}
    \hat\alpha_i:=\frac{(1-c+\hat\lambda_i)+\sqrt{(1-c+\hat\lambda_i)^2-4\hat\lambda_i}}{2}. \label{eq:rough_estim_alpha}
\end{equation}
However, note that $\Sb_{Rid}$ is different from the standard spike model by a factor of $\xi_N^{-1}$. The estimation error may be considerable if there are many large spikes. We can estimate $\xi_N$ in order to correct the error. Note that the exact asymptotic location of $\hat\lambda_{p+1}$ is at $\xi_N^{-1}(1+\sqrt{c})^2$, so we let
$$\hat\xi_N:=\frac{(1+\sqrt{c})^2}{\hat\lambda_{\hat p+1}}.$$
And then the corrected estimation of $\alpha_i$ is
\begin{equation}
    \hat\alpha_i:=\frac{(1-c+\hat\lambda_i\hat\xi_N)+\sqrt{(1-c+\hat\lambda_i\hat\xi_N)^2-4\hat\lambda_i\hat\xi_N}}{2}. \label{eq:correct_estim_alpha}
\end{equation}
We will see in the simulation that the accuracy of this estimator is satisfactory. 

We do some numerical simulations to test the efficiency and robustness of the two estimation procedures. We take $M=500, N=833$ ($c=0.6$) and $a=0.7$. Although the number $p$ and the spikes $\alpha_i$ are assumed to be fixed in the description of model, in order to add some challenge to the test, we pick $p$ randomly following Poisson distribution with parameter $4$, and then $\alpha_1,\dots,\alpha_p$ are independently and uniformly positioned in the interval $[3,10]$. Let $\sigma^2=1$. With this construction of $\Cb_N$, we know that if $p\ge 1$, almost surely the spikes $\alpha_1,\dots,\alpha_p$ are simple. 

Next we determine the three numbers $\gamma_1,\gamma_2,\gamma_3$ defined in the description of the algorithm. Independent samples are drawn $1000$ times under the spike-free model (or "white" model), that is, $\Cb_N=\Ib$, and the ratios $\lambda_{i}(\Sb_w)/\lambda_{i+1}(\Sb_w)$ for $i=1,2,3$ are recorded. Let $\gamma_i$ be the largest value of these ratios for $i=1,2,3$, respectively. By this method, we find 
$$\gamma_1=1.04418,\; \gamma_2=1.0353,\; \gamma_3=1.0294\,.$$

Using the above configurations, we make $1000$ independent realizations, and register the frequency (over the total $1000$ realizations) of each case in Table~\ref{table:recover_spikes}. For all the realizations such that $\hat p=p\ne 0$, we calculate the estimates $\hat\alpha_i, i=1,\dots,\hat p$ using \eqref{eq:correct_estim_alpha} and also their relative errors (RErr). We put the average RErr in Table~\ref{table:recover_spikes}. 
\begin{table}[ht]
\centering
\caption{Accuracy of $\hat p$ and $\hat\alpha_i$}
\begin{tabular}{cccccc}
\toprule
 & Proportion & & Proportion & &  Mean RErr of $\hat\alpha_i$ \\
 \hline
\multirow{2}{*}{$\hat p=p$} & \multirow{2}{*}{$99.8\%$} & $p\ne 0$ & $97.5\%$ & & $0.0031958$  \\
 & & $p=0$ & $2.3\%$ &  \\
\hline
\multirow{2}{*}{$\hat p>p$} & \multirow{2}{*}{$0.2\%$} & $p\ne 0$ & $0.2\%$ &  \\
    & & $p=0$ & $0$ &   \\
\hline
$\hat p<p$ & $0$ & &  &  \\
\bottomrule
\end{tabular}
\label{table:recover_spikes}
\end{table}

We can see that the estimator $\hat p$ has an accuracy of $99.8\%$, and the estimators $\hat\alpha_i$ are also accurate.

\subsection{PCA on time-correlated data matrix}\label{subsec:PCA}
The PCA is a widely used method for noise reduction and data compression. Given the data matrix $\Xb$, our aim in this section is to reduce the row dimension $N$ by removing the noise and preserving as much signal as possible contained in $\Yb$.

We recall the main steps of PCA on the row vectors of $\Yb$, if the original data $\Yb$ is available. 
\begin{enumerate}
    \item Calculate the eigenvalues and the associated eigenvectors of the sample covariance matrix $\Sb_{Y}=M^{-1}\Yb\Yb^*$.
    \item Estimate the number of principal components (PC) $p$ using the algorithm described in \S\ref{subsec:detect_number_signals}.
    \item Let $\vb_1,\dots,\vb_p$ be the eigenvectors of $\Sb_{Y}$ associated to the spikes. Then we get the PC of each vector $\yb_k$ by projecting it into the subspace generated by $(\vb_1,\dots,\vb_p)$, that is,
    $$\yb_{k,pc}:=\mathbf{P}\yb_k=\sum_{i=1}^p \langle \yb_k, \vb_i\rangle \vb_i.$$
    The PC of the matrix data is then
    $$\Yb_{pc}:=(\yb_{1,pc},\dots,\yb_{M,pc})=\mathbf{P}\Yb,$$
    where 
    $$\mathbf{P}:=\sum_{i=1}^p \vb_i\vb_i^*$$
    is the matrix of the orthogonal projection into the subspace generated by $\{\vb_1,\dots,\vb_p\}$. In this way we have removed the noise from the data.
    \item  We can further compress the data by expressing the vectors $\yb_{k,pc}$ as $p$-dimensional vectors in the coordinate system $\vb_1,\dots,\vb_p$, that is,
    $$\left.\yb_{k,pc}\right|_{(\vb_1,\dots,\vb_p)}=(\vb_1^* , \dots , \vb_p^*)^\tran\yb_k.$$
    So the compressed data matrix from $\Yb$ is
    $$\left.\Yb_{pc}\right|_{(\vb_1,\dots,\vb_p)}=(\vb_1^*, \dots , \vb_p^*)^\tran\Yb.$$
    Note that after compression, the dimension of the matrix becomes $p\times M$. In this way we have reduced the signal dimension from $N$ to $p$.
\end{enumerate}
  
When only $\Xb$ is observed, our main results suggest that we can proceed the PCA on the whitened matrix $\Yb_w$ to obtain the estimated PC of data. Note that in fact
$$\Sb_w=\frac{1}{M} \Yb_w\Yb_w^*,$$
so the PCA on $\Yb_w$ can be done using the algorithm described above by replacing $\Yb$ with $\Yb_w$, and $\Sb_{Y}$ with $\Sb_w$, respectively. Let $\yb_{w,k,pc}$ be the PC from the whitened data $\yb_{w,k}$, and let $\Yb_{w,pc}=(\yb_{w,1,pc},\dots,\yb_{w,M,pc})$. Then $\Yb_{w,pc}$ and $\Yb_{pc}$ are close in the following sense.
\begin{prop}\label{prop:consistency_pca_whitened}Let $\Yb$ be defined in \eqref{eq:def_ybj_pca} and $\Xb=\Yb\Rb_M^{1/2}$ with the conditions of Proposition~\ref{prop:whitening_consistence} hold. Assume also that the distance between $\alpha_i$ and $1$ are lower bounded. Then 
\begin{equation}
    \frac{1}{M}\|\Yb_{w,pc}-\Yb_{pc}/\sqrt{\xi_N}\|_F^2\xrightarrow[]{\as}0. \label{eq:consistency_pca_whitened}
\end{equation}
\end{prop}

Note that $\Yb$ contains $M$ columns (so there are $M$ signal vectors). Then \eqref{eq:consistency_pca_whitened} means that the PC's from whitened data and those from the original data are in average close to each other.

We take $N=1500, M=2500$ ($c=0.6$), $p=3$, and a matrix $\Ab$ of dimension $1500\times 3$. Although the matrix $\Ab$ in the model setting is deterministic, in our simulation program we have constructed it from three column Gaussian vectors of distributions $\gNr(0,\sigma_i^2\Ib)$ with $\sigma_1^2=0.1$, $\sigma_2^2=0.2$, $\sigma_3^2=0.3$ respectively, in order to better approximate to the reality. Finally, the singular values of $\Ab$ are $5.21, 20.89, 42.49$. Let $\Yb$ be a $N\times M$ random matrix whose rows are defined as in \eqref{eq:def_ybj_pca} with $\sigma^2=1$. Let $\Xb=\Yb\Rb_M^{1/2}$ where $\Rb_M$ is defined in \eqref{eq:def_r_ij_power_a} with $a=0.2$. Then we proceed the PCA on the whitened data $\Yb_w$ as described above, and get $\Yb_{w,pc}$. Since the data matrix $\Yb$ is also available in such simulation experiments, we can also calculate $\Yb_{pc}$, the PC of $\Yb$, and compare the row vectors of the two matrices. 

The comparison result is illustrated in Figure~\ref{fig:PCA}, where in \ref{fig:norm_diff}, we draw the Euclidean norm of $\yb_{w,k,pc}-\yb_{k,pc}/\sqrt{\xi_N}$, and in \ref{fig:cos_o_w}, we draw the real part of cosine similarity of $\yb_{w,k,pc}$ and $\yb_{k,pc}$, for $k=1,\dots,M$. The cosine similarity of two vectors $\ub,\vb\in\C^N$ is defined by
$$\cos(\ub,\vb):=\frac{\langle \ub,\vb\rangle}{\|\ub\|\|\vb\|},$$
and its real part represents the cosine similarity of $\ub,\vb$ regarded as real vectors in $\R^{2N}$. From the simulation result we can see that the the PC vectors from whitened data are close to those from the original data. The norms of differences are under $0.5$ for the majority part of the signal vectors, while the norms of the PC signal vectors themselves are around $10$. The real cosine similarities are close to $1$, which means that the directions of the two vectors under comparison are almost the same. 

We repeat the same experiments with the same parameters except for $a=0.95$. The results are shown in Figure~\ref{fig:PCA}, plots \ref{fig:norm_diff95} and \ref{fig:cos_o_w95}. We can see that although the LRD parameter $a$ is  quite close to $1$ (the process has a quite long memory), the PCA has a comparable accuracy to the previous case.
\begin{figure}[htbp]
\centering
\subfigure[The Euclidean norm of $\yb_{w,k,pc}-\yb_{k,pc}/\sqrt{\xi_N}$, with $a=0.2$. ]{\includegraphics[width=.48\textwidth]{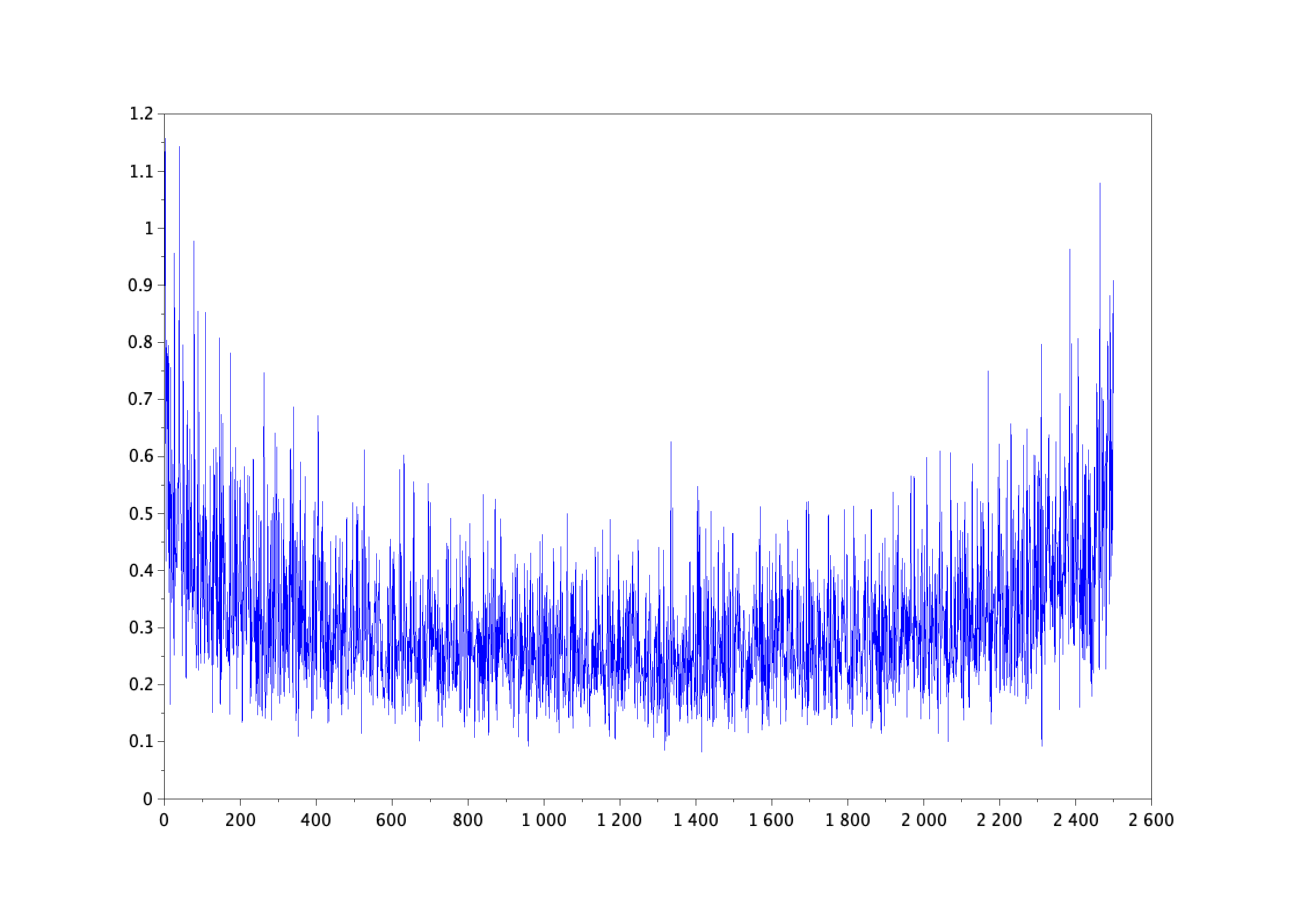}\label{fig:norm_diff}}
\subfigure[The real part of cosine similarity of $\yb_{w,k,pc}$ and $\yb_{k,pc}$, with $a=0.2$.]{\includegraphics[width=.48\textwidth]{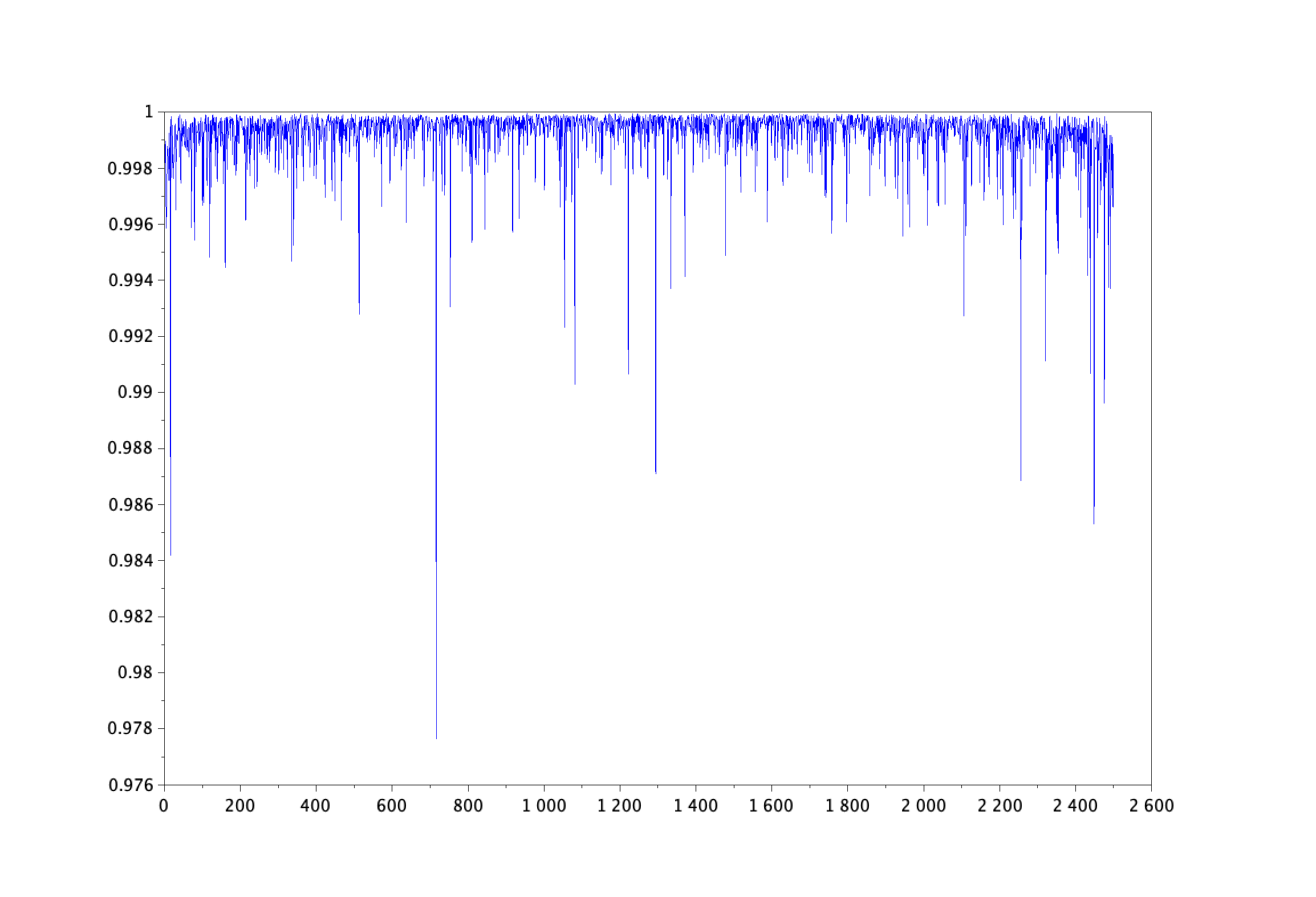}\label{fig:cos_o_w}}
\subfigure[The Euclidean norm of $\yb_{w,k,pc}-\yb_{k,pc}/\sqrt{\xi_N}$, with $a=0.95$ ]{\includegraphics[width=.48\textwidth]{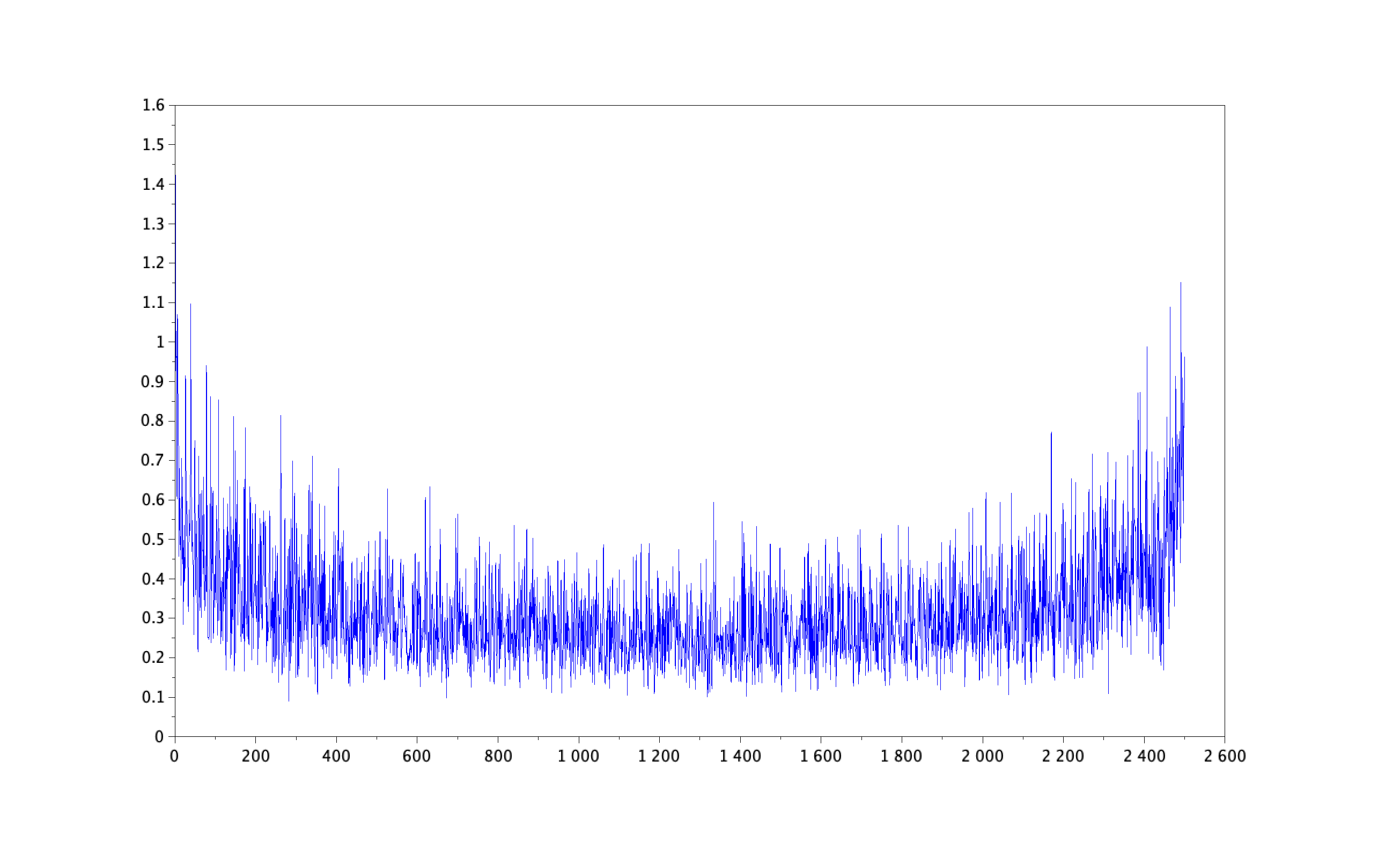}\label{fig:norm_diff95}}
\subfigure[The real part of cosine similarity of $\yb_{w,k,pc}$ and $\yb_{k,pc}$, with $a=0.95$]{\includegraphics[width=.48\textwidth]{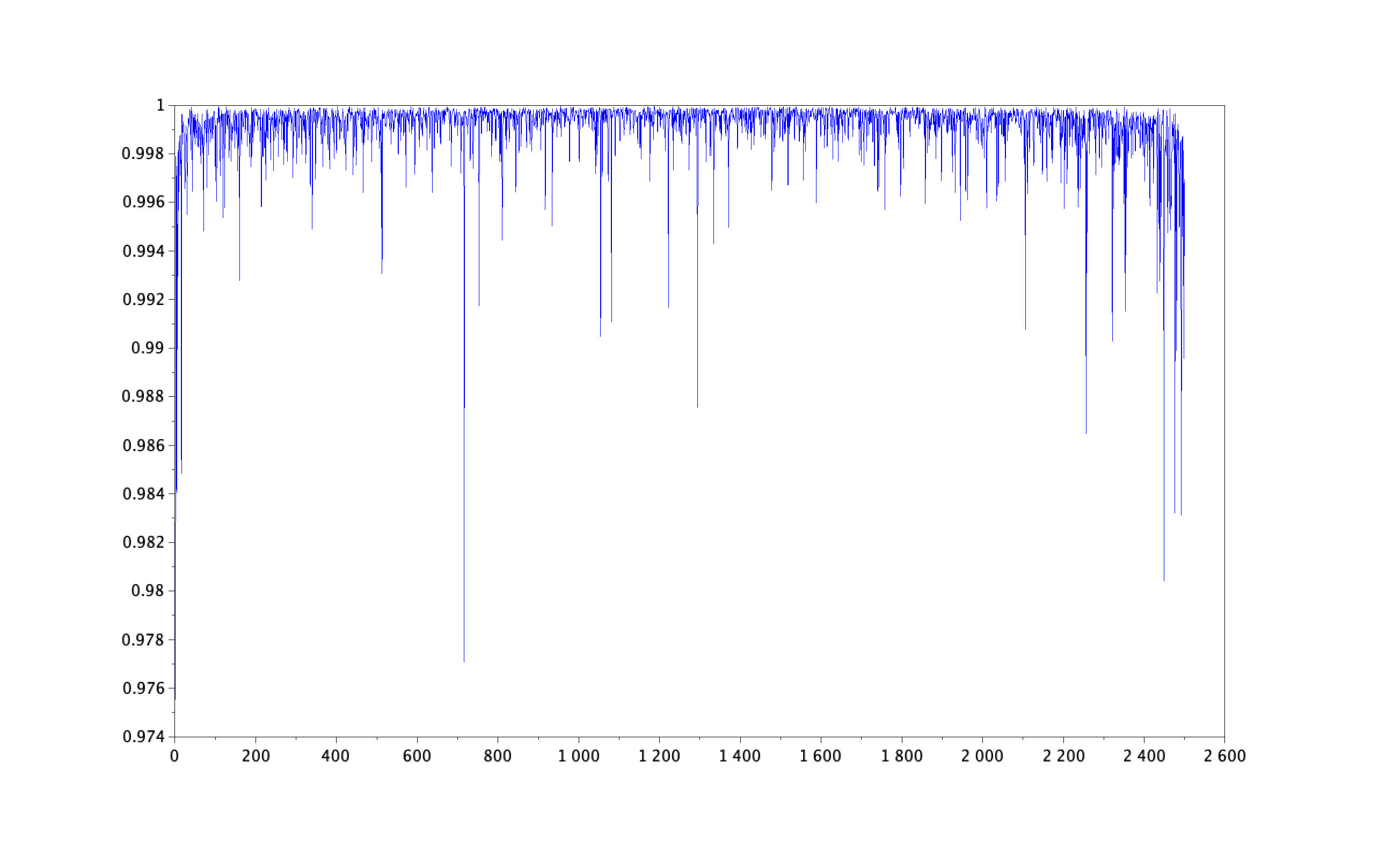}\label{fig:cos_o_w95}}
\caption{Comparison of PCA on the original and whitened data matrices.}\label{fig:PCA}
\end{figure}

\begin{Rq}Thanks to the separable structure $\Xb=\Yb\Rb_M^{1/2}$, if we are not interested in the original data but just want to reduce the underlying noise for compression, we only need to project the columns of $\Xb$ into the subspace generated by $\vb_{w,1},\dots,\vb_{w,p}$, the eigenvectors of $\Sb_w$, and get
$$\Xb_{w,pc}=\left(\sum_{i=1}^p \vb_{w,i}\vb_{w,i}^*\right)\Yb\Rb_M^{1/2},$$
which is the PC of $\Xb$.
\end{Rq}

\section{Numerical studies on inconsistency properties} \label{sec:num_illustrations}
In order to demonstrate the impact of new phenomena caused by LRD in the whitening procedure, and also to illustrate some of our conjectures, we present several numeric simulations in this section. 

Throughout this section, we assume that
\begin{equation}
    \Cb_N=\zeta_N^{-1}\diag(\alpha_1,\dots,\alpha_p, 1, \dots, 1) \label{eq:C_N_perturb_id}
\end{equation} 
where $p\ge 0$ is a fixed integer, $\alpha_i>1$, $i=1,\dots,p$ are some fixed positive numbers, and the normalization $\zeta_N=N^{-1}(\alpha_1+\dots+\alpha_p+N-p)$ is such that $\tr\Cb_N=N$. When $p=0$, $\Cb_N$ is identity. We assume also that the entries of $\Zb$ are i.i.d standard real or complex Gaussian.

\subsection{Norm inconsistency of $\hRb_M$ when $1/2<a<1$} \label{subsec:simu_norm_inconsistency}
We take $\Cb_N=\Ib$, $\Zb$ having i.i.d real standard Gaussian entries. In order to check the consistency of the unbiased estimate $\hRb_M$ with $\Rb_M$ in spectral norm, we take $a=0.9, 0.7, 0.5, 0.3, 0.1$ and $M=250, 500, 1000, 2000$ with $N=2M$. For each case we sample $500$ independent realizations, and list the medians of $\|\hRb_M-\Rb_M\|$ in Table~\ref{table:sp_norm_inconsistance}. 

\begin{table}[ht]
\centering
\caption{Medians of $\|\hRb_M-\Rb_M\|$.}
\begin{tabular}{crrrr}
\toprule
$a$ & $M=250$ & $M=500$ & $M=1000$ & $M=2000$ \\
\hline  
0.9 & 7.0658  &   10.9346 & 12.7499 & 16.5067 \\
0.7 & 3.2477  &    4.0702 &  4.6998 &  5.7822 \\
0.5 & 1.8272  &    1.9393 &  1.9315 &  1.9579 \\
0.3 & 1.1738  &    1.0211 &  1.0285 &  0.9318 \\
0.1 & 0.7494  &    0.6552 &  0.5792 &  0.4873 \\
\bottomrule
\end{tabular}
\label{table:sp_norm_inconsistance}
\end{table}
We know that if $\|\hRb_M-\Rb_M\|\to 0$ in probability, then the median must also converge to $0$. However from Table~\ref{table:sp_norm_inconsistance} we can see that when $a>0.5$, the median of $\|\hRb_M-\Rb_M\|$ is large and increasing with $M$ ($N=2M$). When $a=0.5$, which is the theoretical threshold of spectral norm consistency, the median of $\|\hRb_M-\Rb_M\|$ seems oscillating, neither increasing nor decreasing. When $a<0.5$, in which case we know that $\|\hRb_M-\Rb_M\|\to 0$ almost surely (see \eqref{eq:large_dev_normsp}), the medians are relatively small and tend to decrease with $M$.

\subsection{Ratio inconsistency of $\Rb_M^b$ and its impact on the whitening procedure} \label{subsec:ratio_inconsistency}
We have seen that a striking difference between the LRD and SRD situations is that the biased estimator $\Rb_M^b$ is not ratio consistent (Proposition~\ref{prop:inconsis_Rb}), but instead, it is ratio LSD consistent (Proposition~\ref{prop:lsd_consistent}). That means, only a small number of eigenvalues of the ratio $\Rb_M^{-1}\hRb_M^b$ deviate from $1$. But how many deviating eigenvalues are there, and how does this affect the applications? We now study these questions via  numerical experiments.

We take $M=1000, N=2000$ ($c=2$), $\Cb_N=\Ib$, $\Zb$ has real Gaussian entries, and $a=0.9$ (the same configuration as the first row and second column of Table~\ref{table:sp_norm_inconsistance}). We plot the histograms of the spectra of $(\hRb_M^b)^{-1}\Rb_M$ and $\hRb_M^{-1}\Rb_M$ in Figure~\ref{fig:Esd_ratio_bandu}. We note that the major part of eigenvalues of $(\hRb_M^b)^{-1}\Rb_M$ are close to $1$, but there are several extreme ones which are far away, 
the smallest at $0.3$, and the largest at $1.75$. In contrary, the spectrum of $\hRb_M^{-1}\Rb_M$ spreads in the interval $[0.91,1.12]$ much concentrated around $1$. 
\begin{figure}[htbp]
\centering
\subfigure[ESD of $(\hRb_M^b)^{-1}\Rb_M$.]{\includegraphics[width=0.47\linewidth]{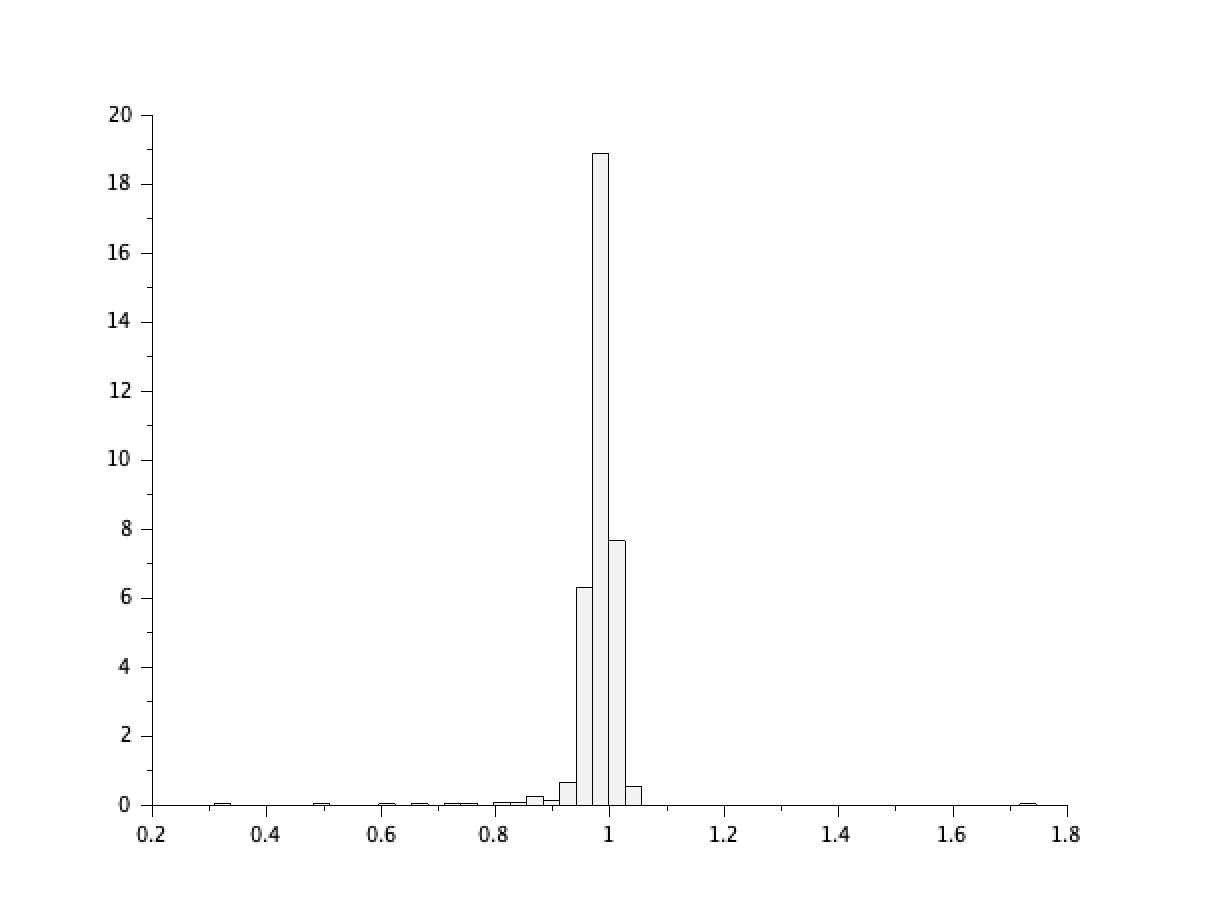}\label{fig:esd_Rbb}}
\subfigure[ESD of $\hRb_M^{-1}\Rb_M$.]{\includegraphics[width=0.47\linewidth]{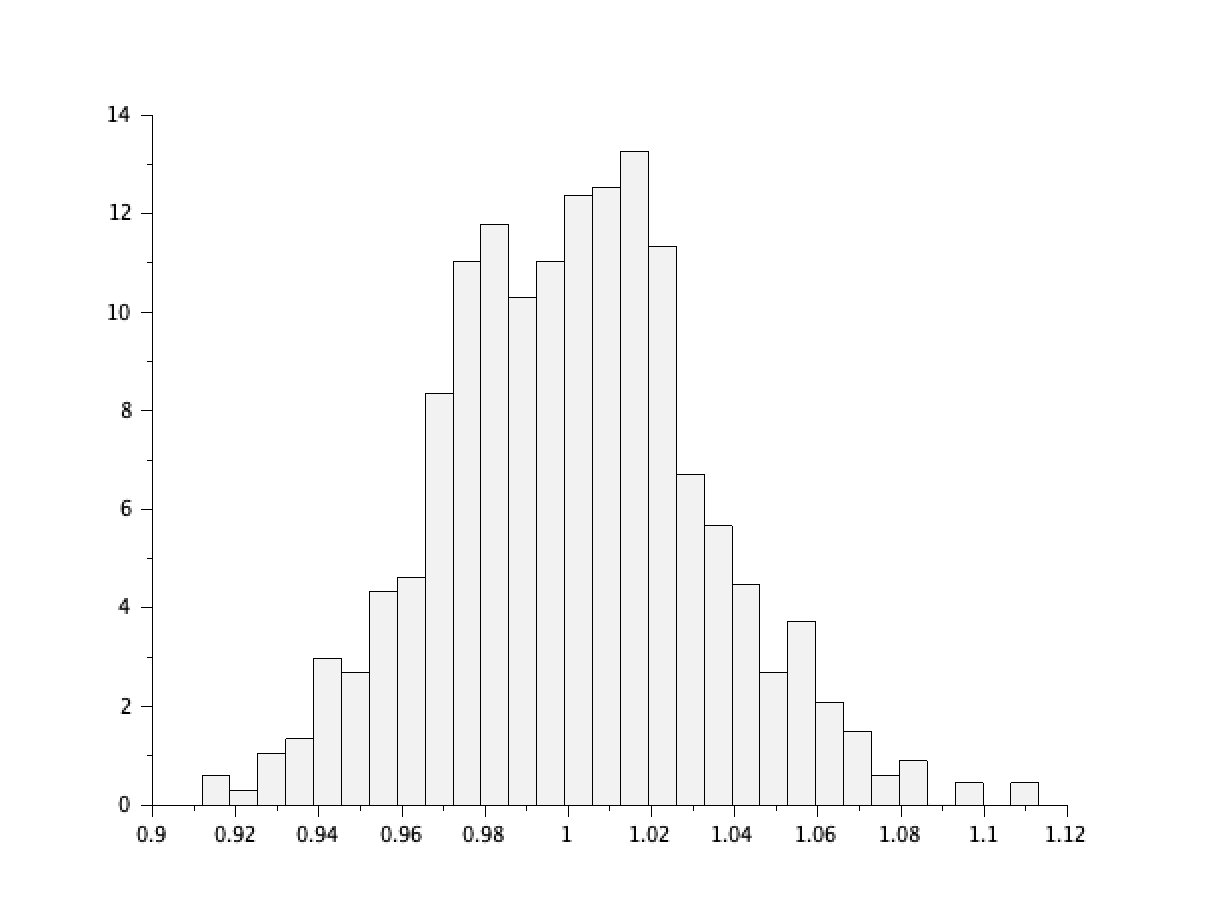}\label{fig:esd_Rbu}}
\caption{Comparison of ESD between $(\hRb_M^b)^{-1}\Rb_M$ and $\hRb_M^{-1}\Rb_M$.}
\label{fig:Esd_ratio_bandu}
\end{figure}

Because of the ratio inconsistency of $\hRb_M^b$, we may observe some extra "pseudo" spikes caused by the spikes of $\Rb_M^{1/2}(\hRb_M^b)^{-1}\Rb_M^{1/2}$, if we replace $\hRb_M$ with $\hRb_M^b$ in the whitening procedure. Let
$$\bSb_{wb}:=\frac{1}{M}\Xb^*(\hRb_M^b)^{-1}\Xb=\frac{1}{M}\Cb_N^{1/2}\Zb^*(\Rb_M^{1/2}(\hRb_M^b)^{-1}\Rb_M^{1/2})\Zb\Cb_N^{1/2},$$
which parallels the matrix $\Sb_w$ in \eqref{eq:Sw_pca} with this replacement. In order to better illustrate the pseudo spikes, when $N>M$, we will plot the ESD of its dual sample covariance matrix
$$\Sb_{wb}:=\frac{1}{N}(\hRb_M^b)^{-1/2}\Xb\Cb_N\Xb^*(\hRb_M^b)^{-1/2}.$$

We take $M=1000, N=8000$, $\Cb_N=\Ib$, and plot in Figure~\ref{fig:pseudo_spikes} the histogram of the ESD's of $\Sb_{wb}$ and also of the corresponding dual matrix $\Sb_w$ derived from $\Sb_w$. We can see that when $\Cb_N$ has no spikes, some unexpected spikes are observed in the ESD of $\Sb_{wb}$, whereas the corresponding $\Sb_w$ does not have this problem. Note also that this phenomenon occurs only with very large ratio $c=N/M$. When we take $N=3000$ or $N=800$ instead, the pseudo spikes disappear, see Figure~\ref{fig:esd_bSbw_3000} and \ref{fig:esd_bSbw_800}. 
\begin{figure}[htbp]
\centering
\subfigure[ESD of $\Sb_{wb}$ with $c=N/M=8$.]{\includegraphics[width=0.47\linewidth]{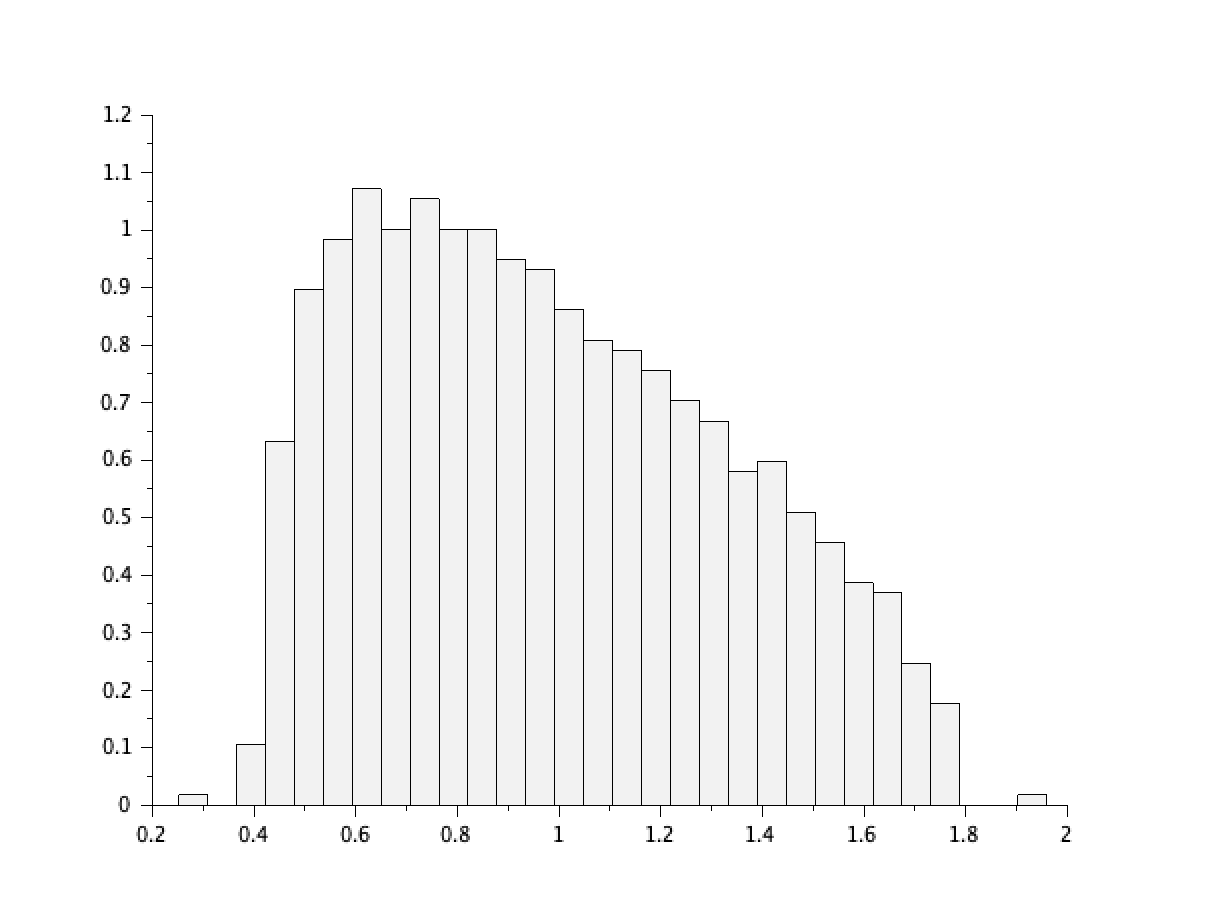}\label{fig:esd_bSbwb}}
\subfigure[ESD of $\Sb_w$ with $c=N/M=8$.]{\includegraphics[width=0.47\linewidth]{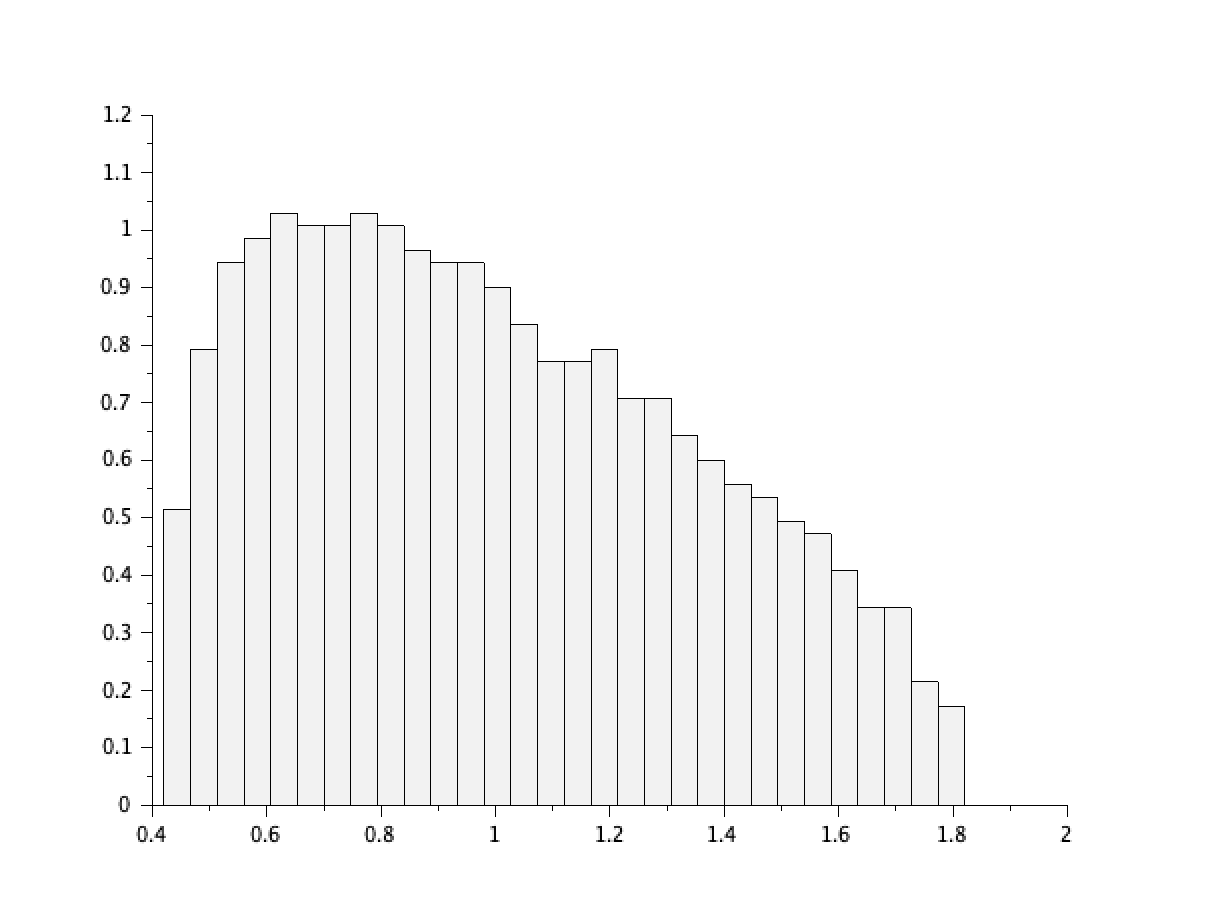}\label{fig:esd_bSbw}}
\subfigure[ESD of $\Sb_{wb}$ with $c=N/M=3$.]{\includegraphics[width=0.47\linewidth]{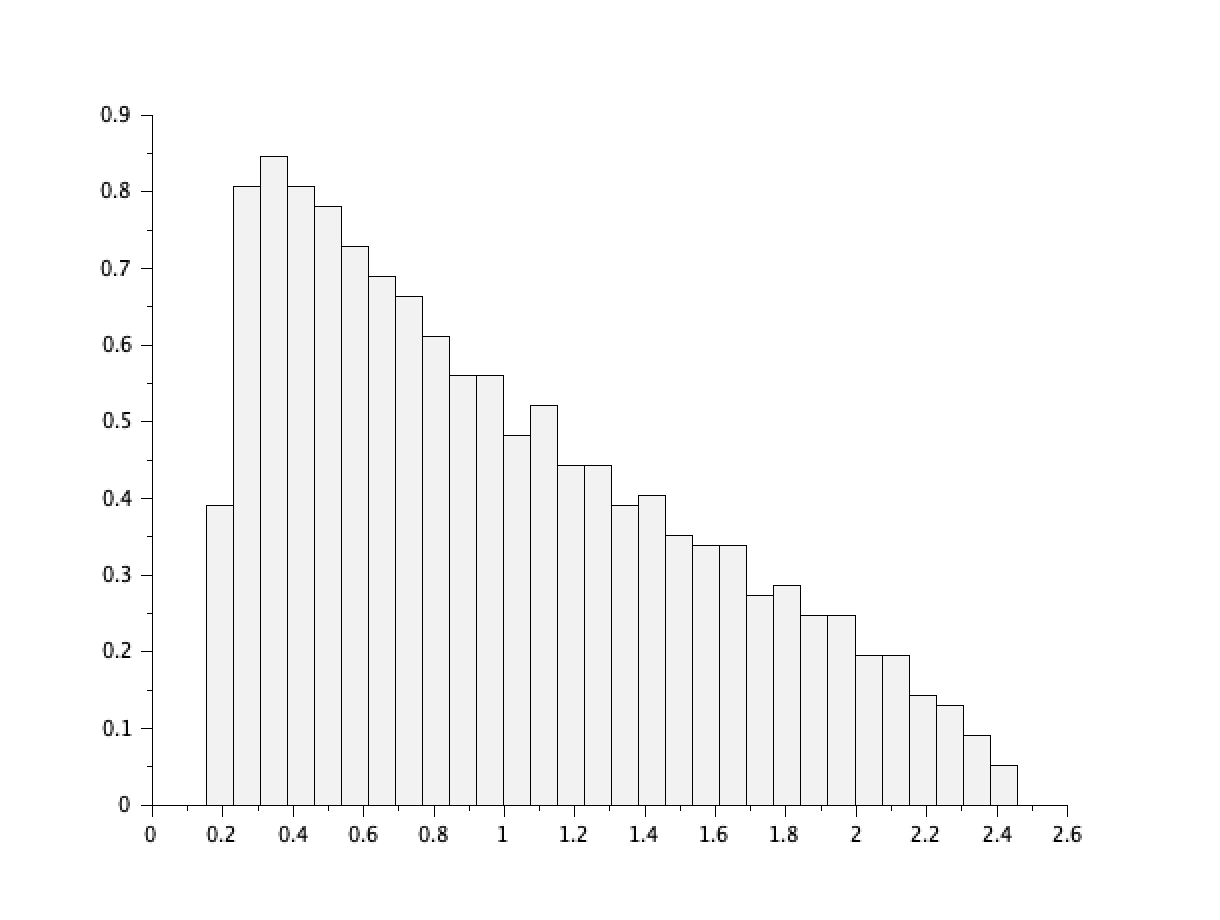}\label{fig:esd_bSbw_3000}}
\subfigure[ESD of $\bSb_{wb}$ with $c=N/M=0.8$.]{\includegraphics[width=0.47\linewidth]{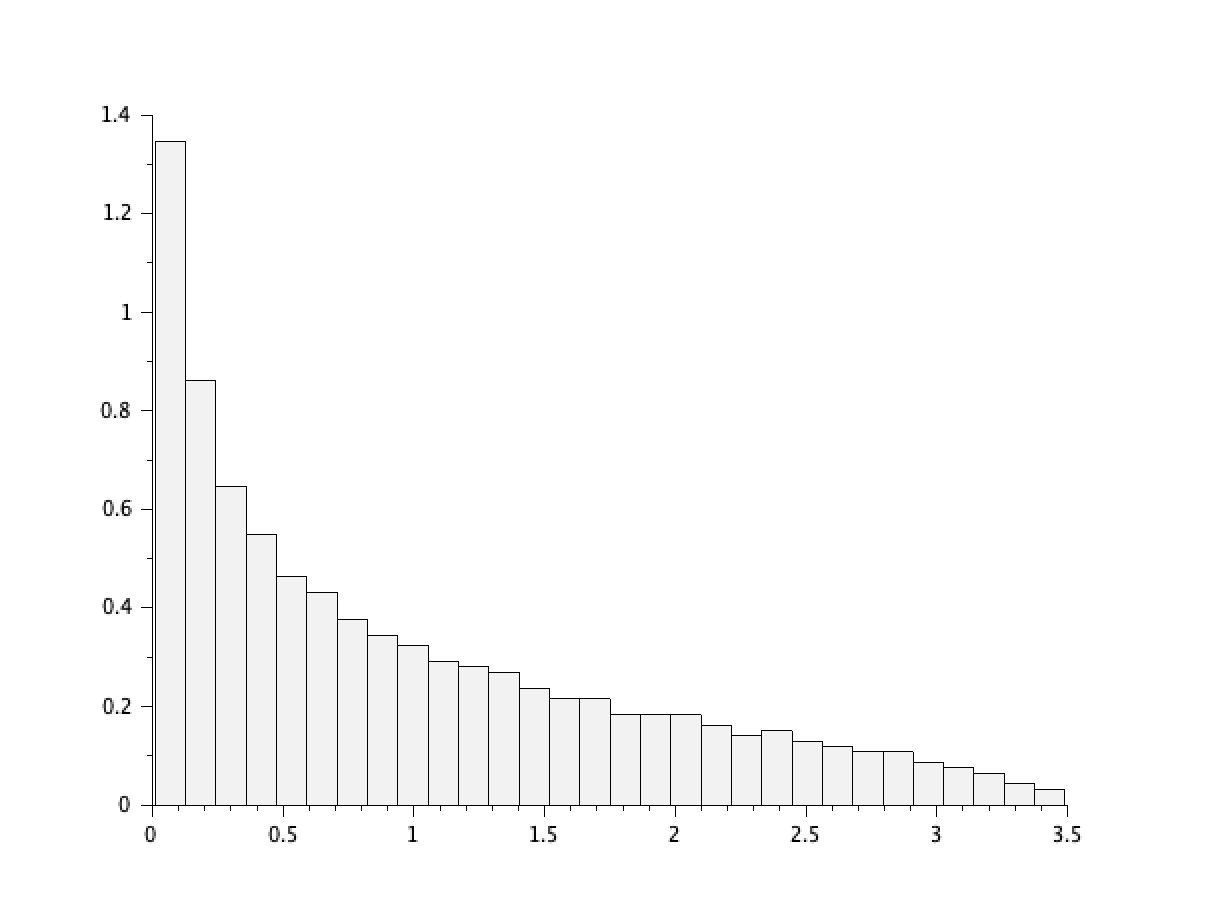}\label{fig:esd_bSbw_800}}
\caption{Inconsistency of $\Rb_M^b$ may cause pseudo spikes under some conditions.}
\label{fig:pseudo_spikes}
\end{figure}

Here is a heuristic explanation. From the ratio LSD consistency described in Proposition~\ref{prop:lsd_consistent}, the ratio inconsistency described in Proposition~\ref{prop:inconsis_Rb} and the numeric simulations in this section, we can think of $(\Rb_M^b)^{-1/2}\Rb_M(\Rb_M^b)^{-1/2}$ asymptotically as a finite perturbation of identity. Thus the appearance or disappearance of pseudo spikes can be explained by the spikes of separable model
$$\Sb_{sep}:=\frac{1}{M}\Cb_N^{1/2}\Zb\mathbf{\Sigma}\Zb^*\Cb_N^{1/2}$$
where $\mathbf{\Sigma}$ can be a positive deterministic Hermitian matrix. Largest eigenvalues of this matrix obeys a phase transition phenomenon as established in \cite{ding2021spiked}. 

\section{Proof of Theorem~\ref{th:main_th}}  \label{sec:proof_main_th}
\subsection{Some preliminaries} \label{subsec:preliminary_pr_main}
As the matrices $\Rb_M^{-1/2}\hRb_M\Rb_M^{-1/2}$ and $\hRb_M\Rb_M^{-1}$ have the same eigenvalues, we have
$$\|\Rb_M^{-1/2}\hRb_M\Rb_M^{-1/2}-\xi_N\Ib\|=\max_i\{|\lambda_i(\hRb_M\Rb_M^{-1}-\xi_N\Ib)|\}.$$
Then the idea of proof is to estimate the range of eigenvalues of the matrix $\hRb_M\Rb_M^{-1}$. 

The following lemma connects the spectrum of $\hRb_M\Rb_M^{-1}$ with the ratio of spectral densities of the two Toeplitz matrices $\hRb_M$ and $\Rb_M$. It was first proved in \cite{grenander2001toeplitz} and extended to integrable spectral densities in \cite[Theorem~2.1]{serra1998extreme}.

\begin{lemma}\label{lem:hRRinv}Let $\Rb_{1,M}, \Rb_{2,M}$ be two $M\times M$ Toeplitz matrices generated by positive spectral densities $f_1, f_2\in L^1(0,2\pi)$, respectively. Then for any $M\ge 1$,
$$\spec(\Rb_{1,M}\Rb_{2,M}^{-1})\subset \left[\essinf_{\theta\in[0,2\pi]}\frac{f_1(\theta)}{f_2(\theta)},\quad \esssup_{\theta\in[0,2\pi]}\frac{f_1(\theta)}{f_2(\theta)}\right].$$
\end{lemma}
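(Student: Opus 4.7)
The plan is to reduce the spectral inclusion to a generalized Rayleigh-quotient argument and then to express the relevant quadratic forms as integrals against the spectral densities $f_1$ and $f_2$. First I would verify that $\Rb_{2,M}$ is positive definite, hence invertible. For any $u\in\C^M\setminus\{0\}$, a standard computation with the Fourier coefficients of $f_2$ gives
\begin{equation*}
u^*\Rb_{2,M}u \;=\; \frac{1}{2\pi}\int_0^{2\pi} f_2(\theta)|p_u(\theta)|^2\dd\theta,
\end{equation*}
where $p_u(\theta):=\sum_{k=1}^M u_k e^{\im k\theta}$ is a nonzero trigonometric polynomial of degree at most $M-1$, and hence vanishes at only finitely many points. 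Since $f_2\ge 0$ is not identically zero, the integrand is strictly positive on a set of positive Lebesgue measure, so $u^*\Rb_{2,M}u>0$. Consequently $\Rb_{2,M}$ is invertible, and $\Rb_{1,M}\Rb_{2,M}^{-1}$ is similar via $\Rb_{2,M}^{1/2}$ to the Hermitian matrix $\Rb_{2,M}^{-1/2}\Rb_{1,M}\Rb_{2,M}^{-1/2}$, so $\spec(\Rb_{1,M}\Rb_{2,M}^{-1})\subset\R$.

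Next I would invoke the min-max principle applied to the pencil $(\Rb_{1,M},\Rb_{2,M})$: writing $v=\Rb_{2,M}^{1/2}u$, every eigenvalue $\lambda$ of $\Rb_{1,M}\Rb_{2,M}^{-1}$ satisfies
\begin{equation*}
\inf_{u\ne 0}\,\frac{\int_0^{2\pi} f_1(\theta)|p_u(\theta)|^2\dd\theta}{\int_0^{2\pi} f_2(\theta)|p_u(\theta)|^2\dd\theta} \;\le\; \lambda \;\le\; \sup_{u\ne 0}\,\frac{\int_0^{2\pi} f_1(\theta)|p_u(\theta)|^2\dd\theta}{\int_0^{2\pi} f_2(\theta)|p_u(\theta)|^2\dd\theta}.
\end{equation*}
Setting $\alpha=\essinf_\theta f_1(\theta)/f_2(\theta)$ and $\beta=\esssup_\theta f_1(\theta)/f_2(\theta)$, the definitions of essential infimum and supremum give $\alpha f_2(\theta)\le f_1(\theta)\le \beta f_2(\theta)$ for almost every $\theta\in[0,2\pi]$; integrating against the nonnegative weight $|p_u(\theta)|^2$ and dividing through by the strictly positive quantity $\int_0^{2\pi} f_2|p_u|^2\dd\theta$ yields $\alpha\le\lambda\le\beta$ for every such $\lambda$, which is exactly the claimed inclusion.

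The argument is elementary once the Fourier identity $u^*\Rb_{j,M}u=(2\pi)^{-1}\int f_j|p_u|^2\dd\theta$ is in place, and I do not foresee a serious obstacle. The only technical points worth flagging are (i) the positive-definiteness of $\Rb_{2,M}$ under the mild hypothesis $f_2\ge 0$, $f_2\not\equiv 0$, which rests on the finite-zero property of trigonometric polynomials of bounded degree, and (ii) the fact that the a.e. bounds $\alpha f_2\le f_1\le\beta f_2$ survive integration against any nonnegative weight irrespective of whether the pointwise ratio $f_1/f_2$ is itself bounded or integrable, so no boundedness hypothesis on $f_1,f_2$ is needed beyond what is stated.
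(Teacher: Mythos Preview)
Your argument is correct and is precisely the classical Rayleigh-quotient proof: the paper does not actually prove this lemma itself but cites it as a known result from Grenander--Szeg\H{o} and its $L^1$ extension by Serra. Your write-up reproduces that standard argument faithfully, including the key observation that $u^*\Rb_{j,M}u=(2\pi)^{-1}\int f_j|p_u|^2\dd\theta$ and the positive-definiteness of $\Rb_{2,M}$ via the finite-zero property of trigonometric polynomials, so there is nothing to add.
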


By this lemma, the spectral densities of the two Toeplitz matrices $\hRb_M$ and $\Rb_M$ are important. We note that $\hRb_M$ is random and depends on $N,M$, then so must be its spectral density, and the coefficients of orders higher than $M-1$ can be arbitrary. For each $N$ and $M$, we define
\begin{equation}
    \hf_M(\theta):= \xi_N f(\theta)+\sum_{n=-M+1}^{M-1}(\hat r_n-\xi_N r_n)e^{\im n \theta} \label{eq:hatf}
\end{equation}
where $f$ is the spectral density of $\Rb_M$. Note that the Fourier coefficients of $\hf_M$ are $\hat r_m$ for $-M+1\le m\le M-1$, thus for this particular $N$ and $M$, $\hf_M$ is the spectral density of $\hRb_M$, and by Lemma~\ref{lem:hRRinv}, the eigenvalues of $\hRb_M\Rb_M^{-1}$ are in the interval
$$\left[\essinf_{\theta\in[0,2\pi]}\frac{\hf_M(\theta)}{f(\theta)},\quad \esssup_{\theta\in[0,2\pi]}\frac{\hf_M(\theta)}{f(\theta)}\right].$$
Thus for any $x>0$, we have
\begin{equation}
     \P\left(\|\Rb_M^{-1/2}\hRb_M\Rb_M^{-1/2}-\xi_N\Ib\|>x\right) \le  \P\left(\esssup_{\theta\in[0,2\pi]}\left|\frac{\hf_M(\theta)}{f_M(\theta)}-\xi_N\right|>x\right). \label{eq:to_ctrl_prob}
\end{equation}
Let
\begin{equation}
    \Upsilon_M(\theta):=\sum_{n=-M+1}^{M-1}r_n e^{\im n\theta}, \quad \hUpsilon_M(\theta):=\sum_{n=-M+1}^{M-1}\hat r_n e^{\im n\theta}. \label{eq:def_Upsilon}
\end{equation}
Then $\hf_M(\theta)-\xi_N f_M(\theta)=\hUpsilon_M(\theta)-\xi_N\Upsilon_M(\theta)$. Recall that $\E\hUpsilon_M(\theta)=\xi_N\Upsilon_M(\theta)$ for any $\theta\in [0,2\pi]$.
Then the RHS of \eqref{eq:to_ctrl_prob} becomes
\begin{equation}
    \P\left(\esssup_{\theta\in[0,2\pi]}\frac{|\hUpsilon_M(\theta)-\E\hUpsilon_M(\theta)|}{f(\theta)}>x\right). \label{eq:sup_relative_error}
\end{equation}
This can be considered as the probability of large relative error of the estimation $\hUpsilon_M(\theta)$ with respect to $f(\theta)$. We will use a similar discretization strategy as \cite{vinogradova2015estimation}. Let
$$0<\theta_1<\theta_2<\cdots < \theta_m < 2\pi$$
be an appropriate mesh of $(0,2\pi)$, which will be precised later, then a key step is to estimate the probability 
\begin{equation}
    \P\left(|\hUpsilon_M(\theta_j)-\E\hUpsilon_M(\theta_j)|>xf(\theta_j)\right) \label{eq:large_deviation_1}
\end{equation}
for each $\theta_j$. This will be done in \S\ref{subsec:large_dev_ind_theta}. 

\subsection{Relative error bound of $\hUpsilon_M(\theta)$ for individual $\theta$}
\label{subsec:large_dev_ind_theta}
The goal of this subsection is to prove the following Proposition~\ref{prop:indiv_theta_dev}.

\begin{prop}\label{prop:indiv_theta_dev} Let $K_1>0$ be an arbitrary positive constant. In both Gaussian case and spherical case, there exists $K_2>0$ depending on $K_1$, such that for any $x\in (0,CK_1/\kappa)$, for large enough $M$, any $N\ge 1$, and any $\theta\in(0,2\pi)$, we have
\begin{equation}
    \P\left(\left|\hUpsilon_M(\theta)-\E\hUpsilon_M(\theta)\right|>xf(\theta)\right)\le 2\exp\left(-\frac{Nx^2}{CK_2\log^2 M}\right). \label{eq:deviation_bound_indiv}
\end{equation}
\end{prop}

We will prove Proposition~\ref{prop:indiv_theta_dev} separately for complex Gaussian, complex spherical, and real cases. Before that, we still need some preliminary works.

Denote
\begin{equation}
    \Db_M(\theta):=\diag(1,e^{-\im\theta},\dots,e^{-\im(M-1)\theta}), \quad \Bb_M:=\left(\frac{1}{M-|i-j|}\right)_{i,j=0}^{M-1}, \label{eq:def_Db_Bb}
\end{equation}
and
\begin{equation}
    \Qb_M(\theta):=\Rb_M^{1/2}\Db_M(\theta)\Bb_M\Db_M^*(\theta)\Rb_M^{1/2}. \label{eq:def_QT}
\end{equation}
Then from Lemma 7 and 8 and (9) of \cite{vinogradova2015estimation}, under \ref{ass:C_diag}, we have
\begin{equation}
    \hUpsilon_M(\theta)=\frac{1}{N}\tr\Cb_N^{1/2}\Zb\Qb_M(\theta)\Zb^*\Cb_N^{1/2}=\frac{1}{N}\sum_{n=1}^N c_n\zb_n\Qb_M(\theta)\zb_n^*, \label{eq:hatU_tr}
\end{equation}
From \ref{ass:Gaussian}, $\zb_n^\tran$ can be a real or complex vector. We only give the complete proof for the complex case, and list the differences between real and complex cases to ease the adaption for the real case. Thus, let us first assume that $\zb_n^\tran$ are complex Gaussian or uniformly distributed on the complex sphere. Let $\sigma_1\ge \sigma_2 \ge \dots \ge \sigma_M$ be the eigenvalues of $\Qb_M(\theta)$ (Warning: $\Qb_M(\theta)$ may be indefinite). By the unitary invariance of $\zb_n^\tran$, we have
\begin{equation}
    \hUpsilon_M(\theta)-\E\hUpsilon_M(\theta)\eqinlaw \frac{1}{N}\sum_{n=1}^Nc_n\sum_{m=1}^M \sigma_m(|Z_{n,m}|^2-1). \label{eq:eqinlaw}
\end{equation}

In light of \eqref{eq:eqinlaw}, in order to prove \eqref{eq:deviation_bound_indiv}, it is equivalent to prove 
\begin{equation}
    \Pc:=\P\left(\left|\frac{1}{N}\sum_{n=1}^Nc_n\sum_{m=1}^M \sigma_m(|Z_{n,m}|^2-1)\right|>xf(\theta)\right)\le 2\exp\left(-\frac{Nx^2}{CK_2\log^2 M}\right). \label{eq:ineq_equiv}
\end{equation}
For this, we find it crucial to estimate $\tr\Qb_M^2(\theta)=\sum_{m=1}^M\sigma_m^2$. We now state the following proposition, whose proof is provided in \S\ref{subsec:proof_estimation_trQb2}.

\begin{prop} \label{prop:traceQ_bound_0} Let $\Qb_M(\theta)$ be defined as \eqref{eq:def_QT} with Toeplitz matrix $\Rb_M$ whose spectral density $f$ satisfies \ref{ass:spectral_density}, \ref{ass:spectral_bound_below}, \ref{ass:spec_den_behav_0}. Then
\begin{equation}
    \frac{\tr\Qb_M^2(\theta)}{f^2(\theta)\log^2 M} \label{eq:trQ2_log2}
\end{equation}
is uniformly bounded in $\theta\in [-\pi,\pi]\backslash\{0\}$ and $M > 1$.
\end{prop}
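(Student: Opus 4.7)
The starting point is the integral representation announced in the introduction,
\[
\tr \Qb_M^2(\theta) = \frac{1}{4\pi^2}\iint_{[-\pi,\pi]^2} f(\theta+x)\,f(\theta+y)\,|g_M(x,y)|^2\,dx\,dy,
\]
with $g_M(x,y) := \vb(y)^* \Bb_M \vb(x)$ and $\vb(\xi) := (e^{\im(j-1)\xi})_{j=1}^M$. To derive it I would first use cyclicity to write $\tr \Qb_M^2(\theta) = \tr(\Bb_M \tilde{\Rb}_M)^2$ with $\tilde{\Rb}_M := \Db_M^*(\theta)\Rb_M\Db_M(\theta)$, which is Toeplitz with Fourier coefficients $r_k e^{-\im k\theta}$ and symbol $f(\cdot-\theta)$. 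After a change of variable and the evenness of $f$, this gives $\tilde{\Rb}_M = (2\pi)^{-1}\int f(\phi+\theta)\,\vb(\phi)\vb(\phi)^*\,d\phi$, and expanding $(\Bb_M\tilde{\Rb}_M)^2$ yields the claimed double integral with $|\vb(\phi_2)^*\Bb_M\vb(\phi_1)|^2$ inside.

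The second step is to obtain precise bounds on the kernel $g_M$. Decomposing $\Bb_M$ along its diagonals and summing the resulting geometric series yields a closed form for $g_M(x,y)$ as a combination of truncated harmonic sums $\sum_{l=1}^M e^{\pm \im l\xi}/l$ (evaluated at $\xi=x$ or $y$) multiplied by Dirichlet kernels $D_n(x-y) := \sum_{j=0}^{n-1} e^{\im j(x-y)}$. Combining the standard bound $|D_n(\alpha)| \le \min(n,\pi/|\alpha|)$ with the Dirichlet/Abel estimate $\bigl|\sum_{l=1}^M e^{\im l\xi}/l\bigr| \lesssim \min(\log M, 1/|\xi|)$ valid on $[-\pi,\pi]\setminus\{0\}$, one obtains pointwise estimates on $|g_M|$ capturing both its diagonal concentration ($|g_M(0,0)| \sim M$) and its off-diagonal decay. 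These complement the global identities $\iint |g_M|^2\,dx\,dy = 4\pi^2 \tr \Bb_M^2 \asymp \log M$ and $\int |g_M(x,y)|^2\,dy = 2\pi\,\vb(x)^*\Bb_M^2\vb(x)$.

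With the kernel under control, the plan for the main inequality is to partition $[-\pi,\pi]^2$ into dyadic regions in terms of the sizes of $|x|$, $|y|$, $|x-y|$ relative to $|\theta|$, and in each region to bound $f(\theta+x)/f(\theta)$ using one of: (i) the slow variation $f(\theta+x)\asymp f(\theta)$ when $|x|\ll|\theta|$, coming from \ref{ass:spec_den_behav_0}; (ii) the uniform estimate $f \lesssim 1$ from \ref{ass:spectral_density} on sets where $|\theta+x|$ stays bounded away from $0$; (iii) the singularity profile $f(\theta+x) \asymp L(|\theta+x|^{-1})|\theta+x|^{-a}$ balanced against the decay of $|g_M|^2$. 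Summing the finitely many dyadic contributions should then produce the target bound $\tr \Qb_M^2(\theta) \lesssim f^2(\theta)\log^2 M$ uniformly in $\theta$.

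The main obstacle is the regime $|\theta|$ small. There the singularity of $f$ at $0$ interacts with both the slowly varying factor $L$ and with a non-diagonal concentration of $|g_M|^2$ along the axes $x=0$ and $y=0$ — concentration coming from the diagonals of $\Bb_M$ with weights $(M-|m|)^{-1}$ that blow up as $|m|\to M-1$, the price paid for the unbiasedness of $\hRb_M$. Obtaining the polylogarithmic exponent $\log^2 M$ (rather than a power of $M$) requires a careful harmonic-analytic control of these boundary contributions, and that is where the bulk of the technical work will sit.
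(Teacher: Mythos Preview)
Your proposal follows essentially the same architecture as the paper's proof: the integral representation, a decomposition of the kernel $g_M$ via harmonic partial sums and a Dirichlet-type factor, and a region-by-region analysis. The paper also splits the range of $\theta$ into three regimes --- $|\theta|\lesssim 1/M$ (handled by the crude bound $\tr\Qb_M^2\lesssim \|\Rb_M\|^2\log M$), $|\theta|\ge\delta$ for fixed $\delta$ (a localized bound, Proposition~\ref{prop:bound_trQ2}), and an intermediate regime $|\theta|\in(\tau_M,\delta_M)$ where the real work sits --- which matches your diagnosis.

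There is, however, a genuine gap in the kernel bounds you state. After summing the diagonals, the closed form for $g_M$ is not a product of a harmonic sum with a Dirichlet kernel but rather a \emph{difference quotient}: up to bounded terms,
\[
|g_M(x,y)|\ \asymp\ \frac{1}{|x-y|}\left|\sum_{k=1}^{M-1}\frac{e^{\im kx}-e^{\im ky}}{k}\right|.
\]
Bounding the two harmonic sums separately by your proposed estimate (which, incidentally, should read $\lesssim\min(\log M,\,1+|\log|\xi||)$ rather than $1/|\xi|$) and applying the triangle inequality yields only $|g_M|\lesssim \log M/|x-y|$. The resulting $|g_M|^2$ has a non-integrable diagonal singularity in the double integral, and for $a\ge 1/2$ the contribution from the region $|x|,|y|\gtrsim 1/M$ blows up --- the dyadic sums will not close. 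The paper's decisive extra input is the sharper bound (Lemma~\ref{lem:bound_g2})
\[
\left|\frac{1}{x-y}\sum_{k=1}^{M}\frac{e^{\im kx}-e^{\im ky}}{k}\right|\le \frac{K}{\sqrt{|xy|}},
\]
obtained by writing the sum as $\int_y^x\sum_{k}e^{\im ks}\,ds$, estimating the integrand by $\pi/|s|$, and then invoking the elementary inequality $(\log u-\log v)/(u-v)\le 1/\sqrt{uv}$. With $|g_M|^2\lesssim 1/|xy|$ the double integral factorizes into $\bigl(\int_{|x|>1/M} f(x+\theta)|x|^{-1}\,dx\bigr)^2$, and Karamata-type estimates for the regularly varying $f$ give $O(f(\theta)\log M)$ per factor, hence the claimed $f^2(\theta)\log^2 M$. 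So your plan is on the right track, but you will need this difference-quotient bound rather than the termwise estimates you list; that is the step that costs an extra factor of $\log M$ compared to the SRD case and cannot be recovered from separate bounds on the two endpoints.
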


\subsubsection{Proof of Proposition~\ref{prop:indiv_theta_dev}, complex Gaussian case} 
Let 
$$\sigma_i'=\frac{\sigma_i}{\sqrt{\sum_{m=1}^M\sigma_m^2}}, \quad i=1,\dots,M.$$
Then
$$\mathcal P=\P\left(\left|\sum_{n=1}^Nc_n\sum_{m=1}^M \sigma'_m(|Z_{n,m}|^2-1)\right|>\frac{Nxf(\theta)}{\sqrt{\sum_{m=1}^M \sigma_m^2}}\right).$$
By Proposition~\ref{prop:traceQ_bound_0}, there exists a constant $K>0$ such that
$$\frac{Nxf(\theta)}{\sqrt{\sum_m\sigma_m^2}}\ge \frac{Nx}{\sqrt{K}\log M}$$
for any $\theta$ and $M > 1$. Then 
\begin{equation}
    \Pc \le \P\left(\left|\sum_n c_n\sum_m\sigma_m'(|Z_{n,m}|^2-1)\right|>\frac{Nx}{\sqrt{K}\log M}\right). \label{eq:proba_simplify_Gaussian}
\end{equation} 
Then we only need to estimate the RHS of \eqref{eq:proba_simplify_Gaussian} with $\sum_m (\sigma'_m)^2=1$. Let
$$\Pc_i:=\P\left((-1)^{i-1}\sum_n c_n\sum_m\sigma_m'(|Z_{n,m}|^2-1)>\frac{Nx}{\sqrt{K}\log M}\right),\quad i=1,2.$$
Then $\Pc\le \Pc_1+\Pc_2$, and the estimation of $\Pc_1$ and $\Pc_2$ is similar, we only need to estimate $\Pc_1$. 

Using Chernoff bound, for any $\tau>0$, we have
\begin{equation}
    \Pc_1\le \exp\left(-\frac{Nx\tau}{\sqrt{K}\log M}+\log\E e^{\sum_n \tau c_n\sum_m\sigma_m'(|Z_{n,m}|^2-1)}\right).
\end{equation}
Note that the rows $\zb_n$ of $\Zb$ are i.i.d across $n$, we then have
\begin{equation}
    \Pc_1\le \exp\left(-\frac{Nx\tau}{\sqrt{K}\log M}+\sum_n\Phi_M(\tau c_n)\right), \label{eq:Pc1_le_Phi_M}
\end{equation}
where $\Phi_M$ is the cumulant generating function of $\sum_m\sigma_m'(|Z_{n,m}|^2-1)$:
$$\Phi_M(z):=\log\E e^{z\sum_m\sigma_m'(|Z_{n,m}|^2-1)}.$$
\begin{lemma}\label{lem:Phi_le_Az2_Gaussian}When $Z_{n,m}$ are i.i.d. standard complex Gaussian variables, there exists $A>0$ and $\varepsilon>0$ such that when $|z|<\varepsilon$, we have
$$|\Phi_M(z)|\le A|z|^2$$
for any $M>1$.
\end{lemma}
\begin{proof}
Let $\phi$ be the cumulant generating function of $|Z_{n,m}|^2-1$:
$$\phi(z):=\log \E e^{z (|Z_{n,m}|^2-1)}.$$
Then as $Z_{n,m}$ are i.i.d. standard complex Gaussian, we have
$$\Phi_M(z)=\sum_m\phi(z\sigma_m'),$$
and
$$\phi(z)=-z-\log(1-z).$$
By the Taylor's expansion $\log(1-z)=z-z^2/2+z^3/3-\cdots$, choosing an arbitrary $\varepsilon\in (0,1)$, there exists $A>0$ such that for any $|z|\le \varepsilon$, we have
$$|\phi(z)|=|z|^2|1/2-z/3+\cdots|\le A |z|^2.$$
Thus
$$|\Phi_M(z)|\le \sum_m|\phi(z\sigma_m')|\le \sum_m A|z|^2(\sigma'_m)^2=A|z|^2.$$
\end{proof}

From \eqref{eq:Pc1_le_Phi_M} and Lemma~\ref{lem:Phi_le_Az2_Gaussian}, for any $\tau>0$ such that $|\tau c_n|\le \varepsilon$, we have
\begin{equation}
    \Pc_1 \le \exp\left(-\frac{\tau Nx}{\sqrt{K}\log M}+A\tau^2 \sum_n c_n^2\right). \label{eq:Pc_1_Atau2cn2_Gau}
\end{equation}
Noting that $\sum_n c_n^2\le CN$ by \ref{ass:C_moments}, we then have
\begin{equation}
    \Pc_1 \le \exp\left(-\frac{\tau Nx}{\sqrt{K}\log M}+CA N\tau^2\right). \label{eq:Pc1_le_CNtau2}
\end{equation}
If we can take
$$\tau=\frac{x}{2CA\sqrt{K}\log M},$$
we will minimize the RHS of \eqref{eq:Pc1_le_CNtau2} and get
\begin{equation}
    \Pc_1 \le \exp\left(-\frac{Nx^2}{4KCA\log^2 M}\right). \label{eq:Pc1_4KCAlog2M}
\end{equation}
In order to validate \eqref{eq:Pc1_4KCAlog2M}, we have to keep $|\tau c_n|\le \varepsilon$ for all $n$. Note that $|c_n|\le \kappa \log M$ by \ref{ass:C_bound}, we only need to keep $\tau \le \varepsilon/(\kappa \log M)$. That is, we only need to keep
$$x\le 2\varepsilon CA\sqrt{K}/\kappa.$$

Taking $K_1=2\varepsilon A\sqrt{K}$ and $K_2=4KA$, we conclude that, for any $x\in (0,CK_1/\kappa)$, 
\begin{equation}
    \Pc_1\le \exp\left(-\frac{Nx^2}{CK_2\log^2 M}\right). \label{eq:deviation_bound_indiv1}
\end{equation}
Note that $K$ or $A$ can be adjusted to a larger constant, which means that $K_1$ can be arbitrarily large, and $K_2$ should be adjusted correspondingly. This is exactly the statement of Proposition~\ref{prop:indiv_theta_dev}. We have thus proved the proposition for complex Gaussian case.

\subsubsection{Proof of Proposition~\ref{prop:indiv_theta_dev}, complex spherical case} When $\zb_n$ follows the uniform distribution on the sphere $\{\zb\in\C^M\tq \|\zb\|=\sqrt{M}\}$. Then $\|\zb_n\|^2=M$. We have
$$\begin{aligned}\hUpsilon_M(\theta)-\E\hUpsilon_M &= \zb_n\Qb_M(\theta)\zb_n^*-\frac{\|\zb_n\|^2}{M}\tr\Qb_M(\theta) \\
    &= \zb_n\left(\Qb_M(\theta)-\frac{\tr\Qb_M(\theta)}{M}\Ib\right)\zb_n^* \\
    &\eqinlaw \sum_{m=1}^M \left(\sigma_m-\frac{\tr\Qb_M(\theta)}{M}\right)|Z_{n,m}|^2.
\end{aligned}$$
Write 
$$\sigma_m'=\frac{\sigma_m-\tr\Qb_M(\theta)/M}{\sqrt{\sum_m(\sigma_m-\tr\Qb_M(\theta)/M)^2}},$$
then $\sum_m \sigma_m'=0$, $\sum_m {\sigma'}_m^2=1$, and $\Pc$ defined in \eqref{eq:ineq_equiv} becomes
$$\Pc=\P\left(\left|\sum_{n=1}^Nc_n\sum_{m=1}^M \sigma_m'|Z_{n,m}|^2\right|>\frac{Nxf(\theta)}{\sqrt{\sum_m(\sigma_m-\tr\Qb_M(\theta)/M)^2}}\right).$$
Using Proposition~\ref{prop:traceQ_bound_0} again, there exists some constant $K>0$ such that
$$\sum_{m=1}^M\left(\sigma_m-\frac{\tr\Qb_M(\theta)}{M}\right)^2\le \tr\Qb_M^2(\theta)\le Kf^2(\theta)\log^2 M.$$
Then
$$\Pc \le \P\left(\left|\sum_{n=1}^Nc_n\sum_{m=1}^M \sigma_m'|Z_{n,m}|^2\right|>\frac{Nx}{\sqrt{K}\log M}\right).$$
Similar to the proof in the Gaussian case, we define 
$$\Pc_i=\P\left((-1)^{i-1}\sum_{n=1}^Nc_n\sum_{m=1}^M \sigma_m'|Z_{n,m}|^2>\frac{Nx}{\sqrt{K}\log M}\right), \quad i=1,2$$
and we just need to estimate $\Pc_1$. Using Chernoff bound,
for any $\tau>0$, we have
\begin{equation}
    \Pc_1\le \exp\left(-\frac{Nx\tau}{\sqrt{K}\log M}+\sum_n\Phi_M(\tau c_n)\right), \label{eq:Pc1_le_Phi_M_sph}
\end{equation}
where 
$$\Phi_M(z):=\log\E e^{z\sum_m\sigma'_m|Z_{n,m}|^2}=\frac{M}{2(M+1)}z^2+\cdots.$$
\begin{lemma}\label{lem:Phi_le_Az2_Spherical}When $\zb_n^\tran$ are i.i.d uniformly distributed on the complex sphere $\{\zb\in\C^M \tq \|\zb\|=\sqrt{M}\}$, there exists $A>0$ and $\varepsilon>0$ such that when $|z|<\varepsilon$, we have
$$|\Phi_M(z)|\le A|z|^2$$
for any $M>1$.
\end{lemma}
\begin{proof}
On the one hand, by the Taylor's expansion of $e^{\Phi_M(z)}$, and the fact that $\E\sum_m\sigma'_m|Z_{n,m}|^2=\sum_m \sigma'_m=0$, we have
\begin{equation}
    \E \exp\left(z\sum_m\sigma'_m|Z_{n,m}|^2\right)=1+\sum_{k=2}^\infty \frac{z^k}{k!}\E\left(\sum_m\sigma'_m|Z_{n,m}|^2\right)^k, \label{eq:taylor_exp_spheric}
\end{equation}
On the other hand, let $\gb=(g_1,\dots,g_M)^\tran \in\C^M$ be a standard complex Gaussian vector. Then since $\gb$ is spherically symmetric, we have $\gb\eqinlaw \|\gb\|\zb_n/\sqrt{M}$ where $\|\gb\|$ and $\zb_n$ are independent (see e.g.\cite{fang2018symmetric}). Then
\begin{equation}
    \begin{aligned} \E\exp\left(z\sum_m\sigma'_m|g_m|^2\right) &= \E\exp\left(z\frac{\|\gb\|^2}{M}\sum_m\sigma'_m|Z_{n,m}|^2\right) \\
    &= \sum_{k=0}^\infty \frac{z^k\E\|\gb\|^{2k}}{k!M^k}\E\left(\sum_m\sigma'_m|Z_{n,m}|^2\right)^k.
 \end{aligned} \label{eq:Gaussian_sphere_mntgen}
\end{equation}
Since we know that for Gaussian variables $g_m$, 
$$\E\exp\left(z\sum_m\sigma'_m|g_m|^2\right)=\prod_{m=1}^M\frac{1}{1-z\sigma'_m}= \exp\left(\sum_{m=1}^M\log(1-z\sigma'_m)\right),$$
comparing to \eqref{eq:Gaussian_sphere_mntgen} we get
\begin{equation}
    \sum_{k=0}^\infty \frac{z^k\E\|\gb\|^{2k}}{k!M^k}\E\left(\sum_m\sigma'_m|Z_{n,m}|^2\right)^k=\exp\left(\sum_{m=1}^M\log(1-z\sigma'_m)\right).\label{eq:gaussian_series}
\end{equation}
Note that $\sum_m \sigma'_m=0$, $\sum_m(\sigma'_m)^2=1$ and $|\sigma'_m|\le 1$. From the proof of Lemma~\ref{lem:Phi_le_Az2_Gaussian}, for an arbitrary $\epsilon\in (0,1)$, there exists $A_\epsilon$ such that $|\log(1-z\sigma'_m)+z\sigma'_m|\le A_\epsilon |z|^2(\sigma'_m)^2$ for any $|z|\le \epsilon$. Thus for these $z$ we have 
\begin{equation}
    \left|\exp\left(\sum_{m=1}^M\log(1-z\sigma'_m)\right)\right|\le
    \exp(A_\epsilon|z|^2).\label{eq:bound_exp_sum}
\end{equation}
Applying Cauchy's integration formula to \eqref{eq:gaussian_series} and using \eqref{eq:bound_exp_sum}, for each $k\ge 0$, we have
$$\left|\frac{\E\|\gb\|^{2k}}{k!M^k}\E(\sum_m\sigma'_m|Z_{n,m}|^2)^k\right|= \frac{1}{2\pi}\left|\int_{|z|=\epsilon}\frac{1}{z^{k+1}}\prod_{m=1}^M\frac{1}{1-z\sigma'_m}\dd z\right|\le \frac{e^{A_\epsilon \epsilon^2}}{\epsilon^k}.$$
Note that
$$\frac{\E\|\gb\|^{2k}}{M^k}=\frac{M(M+1)\cdots(M+k-1)}{M^k}\ge 1,$$
we get
$$\left|\frac{1}{k!}\E(\sum_m\sigma'_m|Z_{n,m}|^2)^k\right|\le \frac{e^{A_\epsilon \epsilon^2}}{\epsilon^k}.$$
Then for any $|z|\le \epsilon/2$, we have
$$\left|\sum_{k=2}^\infty \frac{z^{k-2}}{k!}\E(\sum_m\sigma'_m|Z_{n,m}|^2)^k\right|\le 2\epsilon^{-2}e^{A_\epsilon \epsilon^2}.$$
Taking this into \eqref{eq:taylor_exp_spheric}, we get
$$\left|\E e^{z\sum_m\sigma'_m|Z_{n,m}|^2}-1\right|\le 2\epsilon^{-2}e^{A_\epsilon \epsilon^2}|z|^2$$
for any $|z|\le \varepsilon/2$. Using the inequality $|\log(1+z)|\le K|z|$ for $|z|\le \epsilon<1$, we conclude that there exists $\varepsilon>0$, $A>0$ such that when $|z|<\varepsilon$, we have
$$|\Phi_M(z)|=|\log\E e^{z\sum_m\sigma'_m|Z_{n,m}|^2}|\le A|z|^2.$$
\end{proof}
The remaining proof for spherical case is identical to the proof for Gaussian case from \eqref{eq:Pc_1_Atau2cn2_Gau} onward.

\subsubsection{Real case} In the real case, the proof is similar, so we omit the detail. To complete the proof, we only need to replace the corresponding items with the following mentioned properties in the proof of complex case. 

The first, when $\Cb_N$, $\Zb$, $\Rb_M$ are all real, one has
$$\hUpsilon_M(\theta)=\Re(\hUpsilon(\theta))=\frac{1}{N}\sum_{n=1}^N\zb_n\Re(\Qb_M(\theta))\zb_n^*.$$

The second, note also that
$$\tr(\Re\Qb_M(\theta))^2\le \tr\Qb_M^2(\theta)$$
and 
$$\log \E e^{z|G|^2}=-\frac{1}{2}\log (1-2z)$$
for standard real Gaussian variable $G$ with $|z|<1/2$. 

The third, if $\gb\in\R^M$ is a standard real Gaussian vector, we have
$$\frac{M^k}{\E\|\gb\|^{2k}}=\frac{M^k}{M(M+2)\cdots(M+2k-2)}\le 1.$$

\subsection{Relative error bound for all $\theta$ by discretization.} \label{subsec:discret_unbias}
Let $\beta$ be a positive integer to be determined afterwards. For $k=0,\dots,M^\beta$, let 
$$\theta_k:=\frac{2\pi k}{M^\beta}.$$
For $\theta\in (0,2\pi)$, let $\theta_j$ be such that $\theta_{j-1}<\theta\le \theta_j$ if $\theta\in(0,\pi]$, and $\theta_{j}\le \theta < \theta_{j+1}$ if $\theta\in (\pi,2\pi)$. We write 
$$\begin{aligned}\frac{|\hUpsilon_M(\theta)-\Upsilon_M(\theta)|}{f(\theta)} \le & \frac{|\hat\Upsilon_M(\theta)-\hat\Upsilon_M(\theta_j)|}{f(\theta)} + \frac{|\hat\Upsilon_M(\theta_j)-\Upsilon_M(\theta_j)|}{f(\theta)} + \frac{|\Upsilon_M(\theta)-\Upsilon_M(\theta_j)|}{f(\theta)} \\
=: & \chi_1(\theta)+\chi_2(\theta)+\chi_3(\theta).\end{aligned}$$

We first estimate the probability of large deviation of $\chi_2$, which we will see is the main part of \eqref{eq:main_result_large_dev}. Different from the SRD case, we have to treat the singularity of the spectral density $f$ at $0$. We note that
$$\chi_2(\theta)=\chi_2(\theta_j)\frac{f(\theta_j)}{f(\theta)}.$$
We now prove that $\frac{f(\theta_j)}{f(\theta)}$ is bounded for $\theta\in [0,2\pi]$ and $\theta_j$. Because $f$ is supposed to be even and $2\pi$-periodic, we only need to consider $\theta\in (0,\pi)$. Note that by Lemma~\ref{lem:density_property}\ref{item:density_int_0} below, as $\theta_j\to 0^+$,
$$\frac{f(\theta_j)}{f(\theta)}\le \frac{f(\theta_j)}{\inf_{0<t\le \theta_j}f(t)}\sim 1.$$
Let $\delta>0$ be such that 
$$\frac{f(\theta_j)}{f(\theta)}\le \frac{f(\theta_j)}{\inf_{0<t\le \theta_j}f(t)}\le 2$$
for $0<\theta\le \theta_j\le \delta$. Then for any $\theta\in (0,\pi)$, we have
$$\frac{f(\theta_j)}{f(\theta)}\le \max\left(2, \frac{\sup_{t\in [\delta,\pi]}f(t)}{\inf_{t\in [\delta,\pi]}f(t)}\right),$$
and by \ref{ass:spectral_density}, \ref{ass:spectral_bound_below}, the RHS of the above inequality is bounded. Denote this bound as $F$. Using Proposition~\ref{prop:indiv_theta_dev}, for any $x\in (0,CFK_1/\kappa)$, as $M,N$ are large enough, we have
$$\P\left(\sup_{\theta\in(\theta_{j-1},\theta_j]}\chi_2(\theta) > x \right) \le \P\left(\chi_2(\theta_j) > \frac{x}{F} \right)
    \le 2\exp\left(-\frac{Nx^2}{CF^2K_2\log^2 M}\right).$$
Then 
\begin{equation}
    \P\left(\sup_{\theta\in(0,2\pi)}\chi_2(\theta) > x \right)
    \le 2M^\beta\exp\left(-\frac{Nx^2}{CF^2K_2\log^2 M}\right). \label{eq:bound_chi2}
\end{equation}

We then estimate $\chi_1$. From the proof of Lemma~10 in \cite{vinogradova2015estimation}, and note that $f(\theta)$ is bounded away from $0$, also note Lemma~\ref{lem:norm_toeplitz} for the bound of $\|\Rb_M\|$, and \ref{ass:C_bound} for the bound of $\|\Cb_N\|$, we have
$$\begin{aligned}\sup_{\theta\in[0,2\pi]}\chi_1(\theta) & \le \sup_{\theta\in[0,2\pi]}\frac{1}{Nf(\theta)}\|\Cb_N\|\|\Qb_M(\theta)-\Qb_M(\theta_j)\| |\theta-\theta_j|\sum_{m,n}|Z_{n,m}|^2 \\
    &\lesssim \sup_{\theta\in[0,2\pi]}\frac{1}{N}\|\Cb_N\|\|\Rb_M\|M\sqrt{\log M} |\theta-\theta_j| \sum_{m,n}|Z_{n,m}|^2 \\
    &\lesssim \frac{\kappa}{N}M^{1+a-\beta}L(M)(\log M)^{3/2} \sum_{m,n}|Z_{n,m}|^2 \\
    &\le \kappa M^{2+a-\beta}L(M)(\log M)^{3/2}\frac{\sum_{m,n}|Z_{n,m}|^2}{MN}. \end{aligned}$$
Then for $x_M$ satisfying $x_M\gtrsim M^{-\gamma}$ with some $\gamma>0$, we have
\begin{equation}
    \P\left(\sup_{\theta\in[0,2\pi]} \chi_1(\theta)> x_M\right) \le \P\left(\frac{\sum_{m,n}|Z_{n,m}|^2}{MN}>\frac{M^{\beta-2-a-\gamma}}{\kappa L(M)(\log(M))^{3/2}}\right). \label{eq:bound_chi1_0}
\end{equation}
We take $\beta > 2+a+\gamma$ and let $\varepsilon=\frac{\beta-2-a-\gamma}{2}$, then as $M$ is large enough, we have 
\begin{equation}
    \frac{M^{\beta-2-a-\gamma}}{\kappa L(M)(\log(M))^{3/2}} \gg M^\varepsilon>1. \label{eq:M_beta_w_a_gg_M_varepsilon}
\end{equation} 
If $\zb_n$ are standard complex normal, by \cite[Lemma~2]{vinogradova2015estimation}, we have for any $y>1$,
\begin{equation}
    \P\left(\frac{\sum_{m,n}|Z_{n,m}|^2}{MN}>y\right)\le \exp(-MN(y-1-\log y)). \label{eq:P_sum_Z_MN_le_MNy}
\end{equation}
For $y$ large enough, we have $y-1-\log y>y/2$. Thus if $M$ is large enough, from \eqref{eq:bound_chi1_0}, \eqref{eq:M_beta_w_a_gg_M_varepsilon} and \eqref{eq:P_sum_Z_MN_le_MNy}, 
\begin{equation}
    \P\left(\sup_{\theta\in[0,2\pi]} \chi_1(\theta)> x_M\right) \le \exp(-NM^{1+\varepsilon}/2). \label{eq:bound_chi1}
\end{equation} 
The real Gaussian case is similar.

If $\zb_n$ are spherical of radius $\sqrt{M}$, then $\frac{\sum_{m,n}|Z_{n,m}|^2}{MN}=1$ and the RHS of \eqref{eq:bound_chi1_0} is eventually zero. Therefore, for both spherical and Gaussian cases, when $M$ is large enough, \eqref{eq:bound_chi1} holds.

We now estimate the bound of $\chi_3$. From the proof of Lemma~12 in \cite{vinogradova2015estimation}, and note that $|\xi_N|\le \sqrt{\tr\Cb_N^2/N}\le \sqrt{C}$ we have
$$\begin{aligned}\sup_{\theta\in[0,2\pi]}\chi_3(\theta) &\lesssim M^2|\theta-\theta_j|\|\Rb_M\|\sqrt{C\log M}\\
    &\lesssim M^{2+a-\beta}L(M)\sqrt{C\log M}. \end{aligned}$$
For any $x_M$ satisfying $x_M\gtrsim M^{-\gamma}$, let $\beta>2+a+\gamma$, then as $M$ is large enough, we have
    $$\sup_{\theta\in[0,2\pi]}\chi_3(\theta) < x_M.$$
and thus 
\begin{equation}
    \P\left(\sup_{\theta\in[0,2\pi]}\chi_3(\theta)>x_M\right)=0. \label{eq:bound_chi3}
\end{equation}

The final result follows from combining the above estimations \eqref{eq:bound_chi2}, \eqref{eq:bound_chi1}, \eqref{eq:bound_chi3}, and letting the dominant item \eqref{eq:bound_chi2} absorb the others by appropriately changing the corresponding constants.

\subsection{Proof of Proposition~\ref{prop:traceQ_bound_0}}
\label{subsec:proof_estimation_trQb2}
In order to estimate $\tr\Qb_M^2(\theta)$, we first estimate the norm of the Toeplitz matrix $\Rb_M$. The following lemma is a direct corollary of Theorem~2.3 in \cite{tian2022joint}, so the proof is omitted. 

\begin{lemma} \label{lem:norm_toeplitz} If $(\Rb_M)$ is a sequence of Toeplitz matrices satisfying \ref{ass:spectral_density} and \ref{ass:spec_den_behav_0}, then
$$\|\Rb_M\|\asymp M^aL(M).$$
\end{lemma}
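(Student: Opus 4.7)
The plan is to use the variational characterization of the spectral norm of the positive Hermitian Toeplitz matrix $\Rb_M$. Since $f$ is the spectral density, for any unit vector $\vb\in\C^M$,
$$\vb^*\Rb_M\vb=\frac{1}{2\pi}\int_{-\pi}^{\pi}f(\theta)|P_\vb(\theta)|^2\dd\theta,\quad P_\vb(\theta):=\sum_{k=1}^{M}v_k e^{\im k\theta},$$
and $(2\pi)^{-1}\int|P_\vb|^2\dd\theta=\|\vb\|^2=1$. This reduces the problem to controlling how much mass a polynomial of degree $M$ can concentrate near the singularity of $f$ at $0$, and the slowly varying nature of $L$ will be handled via the classical Potter bounds and Karamata's theorem.

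For the lower bound $\|\Rb_M\|\gtrsim M^a L(M)$, I would take the constant vector $\vb=M^{-1/2}(1,\dots,1)^\tran$, so that $|P_\vb|^2$ is (up to the usual normalization) the Fej\'er kernel $F_M(\theta)=M^{-1}\sin^2(M\theta/2)/\sin^2(\theta/2)$. Since $F_M(\theta)\gtrsim M$ for $|\theta|\le c_0/M$, one gets
$$\vb^*\Rb_M\vb\gtrsim M\int_{|\theta|\le c_0/M}f(\theta)\dd\theta \gtrsim M\int_0^{c_0/M}\theta^{-a}L(\theta^{-1})\dd\theta.$$
By Karamata's theorem for regularly varying functions with index $-a\in(-1,0)$, this last integral is asymptotically equivalent to $(1-a)^{-1}(c_0/M)^{1-a}L(M/c_0)\asymp M^{a-1}L(M)$, hence $\vb^*\Rb_M\vb\gtrsim M^aL(M)$.

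For the matching upper bound, I would split the integral defining $\vb^*\Rb_M\vb$ over $\{|\theta|<1/M\}$ and $\{|\theta|\ge 1/M\}$ and use a different crude estimate on each piece. On the outer region, invoke the Potter bounds: for any $\delta>0$ and $M$ large,
$$L(|\theta|^{-1})\le K_\delta L(M)\max\bigl((M|\theta|)^\delta,(M|\theta|)^{-\delta}\bigr),$$
which together with $|\theta|\le\pi$ and $|\theta|\ge 1/M$ gives $f(\theta)\lesssim M^aL(M)$ uniformly. Since $(2\pi)^{-1}\int|P_\vb|^2\dd\theta\le 1$, the outer contribution is $\lesssim M^aL(M)$. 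On the inner region, bound $|P_\vb(\theta)|^2\le\|\vb\|_1^2\le M\|\vb\|^2=M$ by Cauchy--Schwarz, and apply Karamata's theorem again to get
$$M\int_{|\theta|<1/M}f(\theta)\dd\theta\lesssim M\cdot M^{a-1}L(M)=M^aL(M).$$
Taking the supremum over $\vb$ completes the proof.

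The main obstacle is the careful, uniform handling of the slowly varying factor $L$: the Potter estimate must hold uniformly for $|\theta|$ ranging over $[1/M,\pi]$ as $M\to\infty$, and Karamata's asymptotic for $\int_0^{1/M}\theta^{-a}L(\theta^{-1})\dd\theta$ must be applied with explicit error control so that both constants in $\asymp$ are genuinely independent of $M$. Once these analytic preliminaries are set up, the rest is a straightforward two-scale decomposition of the variational integral; the argument would parallel classical results of Grenander--Szeg\H o on Toeplitz operators with unbounded symbols, adapted to the regularly-varying regime considered in \cite{tian2019joint}.
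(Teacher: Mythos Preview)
Your argument is correct. The variational identity, the Fej\'er-kernel test vector for the lower bound, and the two-scale split with Potter bounds and Cauchy--Schwarz for the upper bound all work as stated; the only place requiring a moment's care is that the Potter estimate needs both $|\theta|^{-1}$ and $M$ above a fixed threshold, so the range $|\theta|\in[\delta_0,\pi]$ (where $|\theta|^{-1}$ is bounded) must be handled separately via \ref{ass:spectral_density}, but this is trivial since $f$ is bounded there while $M^aL(M)\to\infty$.

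The paper, however, does not prove this lemma at all: it simply declares it a direct corollary of Theorem~2.3 in \cite{tian2019joint} and omits the proof. From the way that result is invoked elsewhere in the paper (see the proof of Proposition~\ref{prop:inconsis_Rb}, where it yields $\|\Rb_M/(M^aL(M))-\tilde\Rb_M/M^a\|\to 0$ and convergence of $\lmax(\Rb_M)/(KM^aL(M))$ to the top eigenvalue of an explicit integral operator $\K$), the cited theorem evidently establishes a stronger operator-norm convergence of the rescaled Toeplitz matrix to a limiting compact operator, from which $\|\Rb_M\|\asymp M^aL(M)$ drops out immediately. Your approach is thus genuinely different: it is a direct, elementary, and self-contained estimate that bypasses the operator-limit machinery entirely. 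What you gain is independence from the external reference; what the cited route gains is the sharper information that $\|\Rb_M\|/(M^aL(M))$ actually converges to a specific constant $\lambda_1(\K)$, not merely that it stays between two unspecified constants.
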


We also need the following properties of functions regularly varying at $0$.
\begin{lemma} \label{lem:density_property} If $f$ satisfies \ref{ass:spec_den_behav_0}, then
\begin{enumerate}[label=(\alph*)]
    \item \label{item:density_sup} $\sup \{f(t) \tq x\le t\le \pi\}\sim f(x)$ as $x\to 0^+$.
    \item \label{item:density_inf} $\inf \{f(t) \tq 0< t\le x\}\sim f(x)$ as $x\to 0^+$.
    \item \label{item:density_int_0} $\int_0^x f(t)\dd t \sim \frac{x^{1-a}L(x^{-1})}{1-a}=\frac{xf(x)}{1-a}$ as $x\to 0^+$.
\end{enumerate}
\end{lemma}
\begin{proof}
By changing the variable $u=x^{-1}$,  \ref{item:density_sup} and \ref{item:density_inf} follow from Theorem~1.5.3 of \cite{bingham1989regular}, and \ref{item:density_int_0} from Proposition~1.5.10 of \cite{bingham1989regular}.
\end{proof}

The bound of $\tr\Qb_M^2(\theta)$ will be estimated in different ways according to the location of $\theta$.  From Lemma~\ref{lem:norm_toeplitz} above, and (11) in \cite{vinogradova2015estimation}, we get a global estimation 
\begin{equation}
    \tr\Qb_M^2(\theta)=O(M^{2a}L^2(M)\log M) \label{eq:trQ_bound_trival}
\end{equation} 
for any $\theta\in [-\pi,\pi]$. This bound may be sharp for $\theta$ very close to the singular point $0$, but not for $\theta$ farther away from $0$.

In order to establish a sharper bound of $\tr\Qb_M^2(\theta)$ for a regular point $\theta$, we define, for a certain $\delta>0$, a local $\infty$-norm $\|f\|_{(\theta,\delta)}$ as
\begin{equation}
    \|f\|_{(\theta,\delta)}:=\esssup_{t\in (\theta-\delta,\theta+\delta)}\{|f(t)|\}. \label{eq:def_local_norm}
\end{equation}

\begin{prop} \label{prop:bound_trQ2} Let $\Qb_M(\theta)$ be defined in \eqref{eq:def_QT} with $\Rb_M$ having positive spectral density $f\in L^1(-\pi,\pi)$. Then there exists an absolute constant $K>0$ such that for any $\theta\in\R$ and $\delta\in (0,\pi/2)$, 
\begin{equation}
    \frac{\tr \Qb_M^2(\theta)}{2\log M}\le \|f\|_{(\theta,\delta)}^2 + \frac{K\|f\|_1(\|f\|_1+\|f\|_{(\theta,\delta)})}{\delta^2\log M}. \label{eq:bound_trQ2}
\end{equation}
\end{prop}

Before proving this proposition, it is convenient to express $\tr\Qb_M^2(\theta)$ in terms of $f$. Using the integral expression of entries $r_{i-j}$, we write
\begin{equation}\begin{aligned} &\tr \Qb_M^2(\theta) = \sum_{i,j,k,l}r_{i-j}\frac{e^{-\im (j-k)\theta}}{M-|j-k|}r_{k-l}\frac{e^{-\im(l-i)\theta}}{M-|l-i|} \\
    =&\frac{1}{4\pi^2}\sum_{i,j,k,l}\int_{-\pi}^\pi f(x)e^{-\im (i-j)x}\dd x\frac{e^{-\im (j-k)\theta}}{M-|j-k|}\int_{-\pi}^\pi f(y)e^{-\im (k-l)y}\dd y\frac{e^{-\im(l-i)\theta}}{M-|l-i|} \\
    =&\frac{1}{4\pi^2}\int_{-\pi}^{\pi}\int_{-\pi}^{\pi}f(x+\theta)f(y+\theta)\sum_{i,j,k,l}\frac{e^{\im(jx-ky)-\im(ix-ly)}}{(M-|j-k|)(M-|l-i|)}\dd x\dd y \\
    =&\frac{1}{4\pi^2}\int_{-\pi}^{\pi}\int_{-\pi}^{\pi}f(x+\theta)f(y+\theta)\left|\sum_{i,j}\frac{e^{\im(jx-iy)}}{M-|i-j|}\right|^2\dd x\dd y\,.\end{aligned} \label{eq:trQ2_int}
\end{equation}
We denote
$$g(x,y):=\sum_{i,j}\frac{e^{\im(jx-iy)}}{M-|i-j|}.$$
For later use, it is necessary to study several bounds of this integral kernel $g(x,y)$.

\begin{lemma}\label{lem:kernel_bounds}The kernel $g(x,y)=\sum_{0\le i,j\le M-1}\frac{e^{\im(jx-iy)}}{M-|i-j|}$ satisfies
\begin{enumerate}[label={(\arabic{enumi})}]
    \item \label{enum:g2_density_log} $\frac{1}{4\pi^2}\int_{-\pi}^\pi\int_{-\pi}^\pi |g(x,y)|^2\dd x\dd y=1+2\sum_{k=1}^{M-1}\frac{1}{k}\sim 2\log M$.
    \item \label{enum:g_1var_int}$\frac{1}{2\pi}\int_{-\pi}^\pi |g(x,y)|^2\dd x\lesssim \min(\frac{\log^2 M}{|\sin(y/2)|},\frac{1}{\sin^2(y/2)})$.
    \item \label{enum:g_2var_bound} For any $\delta\in(0,\pi)$ and $x,y\in [-\pi,\pi]\backslash(-\delta,\delta)$, we have
    $$|g(x,y)|\lesssim \frac{1}{|\sin \delta|}.$$
\end{enumerate}
\end{lemma}
\begin{proof}
For \ref{enum:g2_density_log}, we replace $f$ with $1$ and $\Rb_M$ with $\Ib$ correspondingly in eq.\eqref{eq:trQ2_int}, and we get
$$\frac{1}{4\pi^2}\int_{-\pi}^\pi\int_{-\pi}^\pi |g(x,y)|^2\dd x\dd y=\tr\Bb_M^2=1+2\sum_{k=1}^{M-1}\frac{1}{k}.$$

For \ref{enum:g_1var_int}, note that 
$$\frac{1}{2\pi}\int_{-\pi}^\pi |g(x,y)|^2\dd x=\sum_j\left|\sum_i\frac{e^{-\im iy}}{M-|i-j|}\right|^2.$$
Multiplying $|1-e^{-\im y}|=2|\sin(y/2)|$ or $|1-e^{-\im y}|^2=4\sin^2(y/2)$ on both sides, and note that
$$\begin{aligned} & \left|\sum_{i=0}^{M-1}\frac{e^{-\im iy}(1-e^{-\im y})}{M-|i-j|}\right| \\
&= \left|\frac{1}{M-j}-\frac{e^{-\im My}}{M-|M-1-j|}+\sum_{i=1}^{M-1}\frac{(|i-j|-|i-j-1|)e^{-\im iy}}{(M-|i-j|)(M-|i-j-1|)}\right| \\
&\le \frac{1}{M-j}+\frac{1}{M-|M-1-j|}+\sum_{i=1}^{M-1}\frac{1}{(M-|i-j|)(M-|i-j-1|)} \\
    &\lesssim \frac{1}{M-j}+\frac{1}{M-|M-1-j|},\end{aligned}$$
we get
$$|\sin(y/2)|\int_{-\pi}^\pi |g(x,y)|^2\dd x\lesssim \log(M)\sum_{j=0}^{M-1}\left(\frac{1}{M-j}+\frac{1}{M-|M-1-j|}\right)\lesssim\log^2 M,$$
and
$$\sin^2(y/2)\int_{-\pi}^\pi |g(x,y)|^2\dd x\lesssim \sum_{j=0}^{M-1}\left(\frac{1}{M-j}+\frac{1}{M-|M-1-j|}\right)^2\le K,$$
and \ref{enum:g_1var_int} follows.

For \ref{enum:g_2var_bound}, we rewrite $g(x,y)$ as
\begin{equation}
    \begin{aligned} g(x,y) =& \frac{1}{M}\sum_{j=0}^{M-1}e^{\im j(x-y)}+ \sum_{m=1}^{M-1}\frac{\sum_{i=0}^{M-m-1}e^{\im (ix+mx-iy)} + \sum_{j=0}^{M-m-1}e^{\im (jx-my-jy)}}{M-m}\\
    =& \frac{1}{M}\frac{1-e^{\im M(x-y)}}{1-e^{\im (x-y)}}  + \frac{e^{\im Mx}}{1-e^{\im (x-y)}}\sum_{m=1}^{M-1}\frac{e^{-\im(M-m)x}-e^{-\im(M-m)y}}{M-m} \\
    & + \frac{e^{-\im My}}{1-e^{\im (x-y)}}\sum_{m=1}^{M-1}\frac{e^{\im(M-m)y}-e^{\im(M-m)x}}{M-m} \\
    =& \frac{1}{M}\frac{1-e^{\im M(x-y)}}{1-e^{\im (x-y)}}+ \frac{e^{\im Mx}}{1-e^{\im (x-y)}}\sum_{k=1}^{M-1}\frac{e^{-\im kx}-e^{-\im ky}}{k} \\
    & + \frac{e^{-\im My}}{1-e^{\im (x-y)}}\sum_{k=1}^{M-1}\frac{e^{\im ky}-e^{\im kx}}{k} \\
    =:& g_1(x,y)+g_2(x,y)+g_3(x,y).
    \end{aligned} \label{eq:expr_g} \end{equation} 
We note that for any $x,y\in \R$, 
$$|g_1(x,y)|=\frac{1}{M}\left|\frac{\sin (M(x-y)/2)}{\sin((x-y)/2)}\right|\le 1.$$
Let $z_1=e^{-\im x}, z_2=e^{-\im y}$, then we have
$$|g_2(x,y)|= \left|\frac{1}{z_1-z_2}\int_{[z_1,z_2]} \sum_{k=0}^{M-2} z^k\dd z\right|\le \sup_{z\in[z_1,z_2]}\left|\frac{1-z^{M-1}}{1-z}\right|\le \frac{1}{|\sin \delta|},$$
where $[z_1,z_2]$ denotes the segment between $z_1$ and $z_2$. The same bound also controls $g_3$, so \ref{enum:g_2var_bound} holds.
\end{proof}

Now we are ready to prove Proposition~\ref{prop:bound_trQ2}.

\begin{proof}[Proof of Proposition~\ref{prop:bound_trQ2}] For $\delta\in (0,\pi/2)$, let $E_\delta:=[-\pi,\pi]\backslash (-\delta,\delta)$. Then we have
\begin{equation}
    \begin{aligned} & \tr\Qb_M^2(\theta) = \frac{1}{4\pi^2}\int_{-\pi}^{\pi}\int_{-\pi}^{\pi}f(x+\theta)f(y+\theta)|g(x,y)|^2\dd x\dd y \\
    = & \frac{1}{4\pi^2}\left(\int_{-\delta}^{\delta}\int_{-\delta}^{\delta}+\int_{E_\delta}\int_{E_\delta}+\int_{-\delta}^{\delta}\int_{E_\delta}+\int_{E_\delta}\int_{-\delta}^{\delta}\right)f(x+\theta)f(y+\theta)|g(x,y)|^2\dd x\dd y \\
    =: & P_1+P_2+P_3+P_4. \end{aligned} \label{eq:trQ2_integral}
\end{equation}
For $P_1$, using Lemma~\ref{lem:kernel_bounds} \ref{enum:g2_density_log}, we have
$$|P_1|\le \|f\|_{\theta,\delta}\frac{1}{4\pi^2}\int_{-\delta}^{\delta}\int_{-\delta}^{\delta}|g(x,y)|^2\dd x\dd y\le 2\|f\|_{\theta,\delta}\log M.$$
For $P_2$, using Lemma~\ref{lem:kernel_bounds} \ref{enum:g_2var_bound}, we have
$$|P_2|\lesssim \frac{1}{\sin ^2\delta}\int_{E_\delta}\int_{E_\delta}f(x+\theta)f(y+\theta)\dd x\dd y\lesssim \frac{\|f\|_1^2}{\delta^2}.$$
For $P_3$ and similarly for $P_4$, using Lemma~\ref{lem:kernel_bounds} \ref{enum:g_1var_int}, we have
$$|P_3| \le \|f\|_{(\theta,\delta)}\frac{1}{4\pi^2} \int_{E_\delta}f(y+\theta)\int_{-\delta}^{\delta}|g(x,y)|^2\dd x\dd y\lesssim  \frac{\|f\|_{(\theta,\delta)}\|f\|_1}{\delta^2},$$
and $|P_4|$ is controlled by the same bound.

Summing up the bounds for $P_1,P_2,P_3,P_4$ and dividing $8\pi^2\log M$, the result follows.
\end{proof}

As a consequence of Proposition~\ref{prop:bound_trQ2}, if $f$ is bounded in a neighborhood of a point or a set, then $\tr \Qb_M^2(\theta)/\log M$ is (uniformly) bounded at this point or in this set.

To summarize what we have obtained, if $f$ satisfies \ref{ass:spectral_density}, \ref{ass:spectral_bound_below} and \ref{ass:spec_den_behav_0}, then
\begin{equation}
    \frac{\tr\Qb_M^2(\theta)}{f^2(\theta)\log M} \label{eq:trQ2_log}
\end{equation}
is bounded uniformly in $M\ge 1$ and
\begin{enumerate}
    \item in $\theta\in [-\frac{A}{M},\frac{A}{M}]$ for any $A>0$, using the global bound \eqref{eq:trQ_bound_trival} and Lemma~\ref{lem:density_property}\ref{item:density_inf}.
    \item in $\theta\in [-\pi,-\delta]\cup [\delta,\pi]$ for any $\delta\in (0,\pi/2)$, using Proposition~\ref{prop:bound_trQ2}.
\end{enumerate}

Therefore, using a classic argument, we can find two sequences of positive numbers $1/M \ll \tau_M < \delta_M \ll 1$ such that \eqref{eq:trQ2_log2} is uniformly bounded in $[-\pi,-\delta_M]\cup [-\tau_M,\tau_M]\cup [\delta_M,\pi]$. In order to complete the proof of Proposition~\ref{prop:traceQ_bound_0}, it remains to prove the uniform boundedness of \eqref{eq:trQ2_log2} for $|\theta|\in (\tau_M,\delta_M)$. 

For any such $\theta$, suppose that $\theta>0$ without loss of generality, and denote $E_\theta:=[-\pi,\pi]\backslash(-\frac{\theta}{2},\frac{\theta}{2})$. We write
\begin{equation}
    \begin{aligned} \tr\Qb_M^2(\theta) &= \frac{1}{4\pi^2}\int_{-\pi}^{\pi}\int_{-\pi}^{\pi}f(x)f(y)|g(x-\theta,y-\theta)|^2\dd x\dd y \\
    &=: P_1+P_2+P_3+P_4, \end{aligned} \label{eq:trQ2_integral_theta_near_0}
\end{equation}
where the $P_i$'s are the four sub-integrals defined via the following partition of  the double integral
\[        \int_{-\pi}^{\pi}\int_{-\pi}^{\pi} = \int_{E_\theta}\int_{E_\theta}+\int_{-\frac{\theta}{2} }^{\frac{\theta}{2}}\int_{-\frac{\theta}{2} }^{\frac{\theta}{2}}+\int_{-\frac{\theta}{2} }^{\frac{\theta}{2}}\int_{E_\theta}+\int_{E_\theta}\int_{-\frac{\theta}{2}}^{\frac{\theta}{2}}.
\] 
For $P_1$, by Lemma~\ref{lem:density_property} \ref{item:density_sup} and the definition of regularly varying functions, with some $\varepsilon>0$, whenever $\theta$ is small enough,
\begin{equation}
    \sup_{x\in E_\theta}f(x) \le (1+\varepsilon) f(\theta/2) \le 2^a(1+\varepsilon)^2 f(\theta). \label{eq:sup_E_theta_f}
\end{equation}
Thus by Lemma~\ref{lem:kernel_bounds}\ref{enum:g2_density_log},
$$|P_1|\lesssim \frac{f^2(\theta)}{4\pi^2}\int_{E_\theta}\int_{E_\theta}|g(x-\theta,y-\theta)|^2\dd x\dd y \lesssim f^2(\theta)\log M.$$

For $P_2$, by Lemma~\ref{lem:kernel_bounds}\ref{enum:g_2var_bound} and Lemma~\ref{lem:density_property}\ref{item:density_int_0}, we have
$$|P_2|\lesssim \sup_{x,y\in [-\frac{3\theta}{2},-\frac{\theta}{2}]}|g(x,y)|^2\left(\int_{-\frac{\theta}{2}}^\frac{\theta}{2}f(x)\dd x\right)^2\lesssim f^2(\theta).$$

For $P_3$, and similarly for $P_4$, using \eqref{eq:sup_E_theta_f} again, and using Lemma~\ref{lem:kernel_bounds}\ref{enum:g_1var_int} and Lemma~\ref{lem:density_property}\ref{item:density_int_0}, we have
$$|P_3|\lesssim f(\theta)\int_{-\frac{\theta}{2} }^{\frac{\theta}{2}}f(y)\int_{-\pi}^\pi|g(x,y)|^2\dd x\dd y\lesssim f^2(\theta)\log^2 M.$$
The same bound also controls $P_4$. 

The proof of Proposition~\ref{prop:traceQ_bound_0} is complete by summing up the bounds for $P_1, P_2, P_3, P_4$.

\section{Proof of Proposition~\ref{prop:inconsis_Rb}} \label{sec:proof_inconsis_Rb}
Define
$$r_k^b:=\left(1-\frac{|k|}{M}\right)r_k,$$
and
$$\hUpsilon^b_M(\theta):=\sum_{k=-M+1}^{M-1}\hat r_k^b e^{\im k\theta}, \quad \Upsilon^b_M(\theta):=\sum_{k=-M+1}^{M-1} r_k^b e^{\im k\theta}.$$
Note that $\Upsilon^b_M$ is the Ces\`aro mean of $\Upsilon_M(\theta):=\sum_{k=-M+1}^{M-1} r_k e^{\im k\theta}$, therefore
\begin{equation}
    \Upsilon^b_M(\theta)=\frac{1}{2\pi}\int_{-\pi}^{\pi} f(x)F_M(\theta-x)\dd x,
\end{equation}
where $F_M(x)=\frac{\sin^2(Mx/2)}{M\sin^2(x/2)}$ is the Fej\'er kernel. Thus for any $\theta\in\R$, we have
\begin{equation}
    \essinf_{t}f(t)\le \Upsilon^b_M(\theta)\le \esssup_{t} f(t). \label{eq:range_upsilonb}
\end{equation}
By \ref{ass:spectral_bound_below}, $\Upsilon^b_M$ is positive and uniformly lower bounded from $0$.

Following the same idea as \S\ref{subsec:preliminary_pr_main}, we only need to estimate 
\begin{equation}
     \P\left(\sup_{\theta\in[0,2\pi]}\left|\frac{\hUpsilon^b_M(\theta)}{\Upsilon^b_M(\theta)}-1\right|>x\right) \label{eq:to_ctrl_prob_bias}
\end{equation}
for any $x>0$. 

We use the same discretization strategy as \S\ref{sec:proof_main_th}. For any fixed $\theta\in [0,2\pi]$, define
$$d_M(\theta)=\frac{1}{\sqrt{M}}(1,e^{-\im \theta},\dots, e^{-\im (M-1)\theta})^\tran,\quad \Qb_M^b(\theta):= \Rb_M^{1/2}d_M(\theta)d_M^*(\theta)\Rb_M^{1/2}$$
then by Lemma~3 of \cite{vinogradova2015estimation}, we have
$$\hUpsilon^b_M(\theta)=\frac{1}{N}d_M^*(\theta)\Rb_M^{1/2}\Zb^*\Cb_N\Zb\Rb_M^{1/2}d_M(\theta)=\frac{1}{N}\sum_{n=1}^N c_n\zb_n\Qb_M^b(\theta)\zb_n^*,$$
and by the unitary invariance of $\zb_n$, note also that $\Qb_M^b(\theta)$ is of rank one with a positive eigenvalue $d_M^*(\theta)\Rb_M d_M(\theta)=\Upsilon^b_M(\theta)$, we then have
$$\frac{\hUpsilon^b_M(\theta)}{\Upsilon^b_M(\theta)}\eqinlaw \frac{1}{N}\sum_{n=1}^N c_n|Z_{n,1}|^2.$$
Then
$$\P\left(\left|\frac{\hUpsilon^b_M(\theta)}{\Upsilon^b_M(\theta)}-1\right|>x\right)=\P\left(\left|\frac{1}{N}\sum_{n=1}^N c_n(|Z_{n,1}|^2-1)\right|>x\right).$$
Using the same method as \S\ref{subsec:large_dev_ind_theta}, we get the concentration inequality
$$\P\left(\left|\frac{1}{N}\sum_{n=1}^N c_n(|Z_{n,1}|^2-1)\right|>x\right)\le 2\exp\left(-\frac{KNx^2}{\kappa^2\log^2 M}\right)$$
for some constant $K>0$, for any $x>0$ and $M,N$ large enough.

For the discretization step, we use the same method as \S\ref{subsec:discret_unbias}, and the proof of Lemma~4, Lemma~6 in \cite{vinogradova2015estimation}, along with the norm bound $\|\Rb_M\|\lesssim M^aL(M)$. Note also that $\Upsilon_M^b(\theta)$ are positive and uniformly lower bounded from $0$. We finally get
$$\P\left(\esssup_{\theta}\left|\frac{\hUpsilon^b_M(\theta)}{\Upsilon^b_M(\theta)}-1\right|>x\right)\le 2M^\beta\exp\left(-\frac{KNx^2}{\kappa^2\log^2 M}\right)$$
for some $\beta>0$, $K>0$ and any $x>0$, large enough $M,N$. This implies that 
$$\|(\Rb_M^b)^{-1/2}\hRb^b_M(\Rb_M^b)^{-1/2}-\Ib\|\to 0$$
as $M,N\to\infty$ with $N\gg \log^3 M$.

Next we prove the inconsistency \eqref{eq:inconsistency}. We first prove that for two sequences of invertible matrices $\Rb_{1,M}, \Rb_{2,M}$, a necessary condition for the convergence
\begin{equation}
    \|\Rb_{1,M}^{-1/2}\Rb_{2,M}\Rb_{1,M}^{-1/2}-\Ib\|\xrightarrow[M\to\infty]{} 0\,, \label{eq:assumpt_absurd}
\end{equation}
is
\begin{equation}
    \lim_{M\to\infty} \frac{\lmax(\Rb_{2,M})}{\lmax(\Rb_{1,M})}=1. \label{eq:max_lim_gamma}
\end{equation}
Take an arbitrary $\varepsilon>0$. Let $u$ be un eigenvector of $\Rb_{1,M}$ associated with $\lmax(\Rb_{1,M})$, then from \eqref{eq:assumpt_absurd}, for large enough $M$,
\begin{equation}
    1-\varepsilon<u^*(\Rb_{1,M})^{-1/2}\Rb_{2,M}(\Rb_{1,M})^{-1/2}u=\frac{u^*\Rb_{2,M} u}{\lmax(\Rb_{1,M})}\le \frac{\lmax(\Rb_{2,M})}{\lmax(\Rb_{1,M})}. \label{eq:1ep_le_lmax}
\end{equation}
Note that $(\Rb_{1,M}^{-1/2}\Rb_{2,M}\Rb_{1,M}^{-1/2})^{-1}$ has the same eigenvalues as $\Rb_{2,M}^{-1/2}\Rb_{1,M}\Rb_{2,M}^{-1/2}$. Recall that for a sequence of Hermitian matrices $\Ab_M$, the convergence $\|\Ab_M-\Ib\|\to 0$ is equivalent to the convergence of its eigenvalues, i.e. $\lmax(\Ab_M)\to 1$, $\lmin(\Ab_M)\to 1$. Therefore \eqref{eq:assumpt_absurd} also implies that
$$\|(\Rb_{2,M})^{-1/2}(\Rb_{1,M})(\Rb_{2,M})^{-1/2}-\Ib\|\xrightarrow[M\to\infty]{} 0\,.$$
Using the same arguments as \eqref{eq:1ep_le_lmax}, we get, for large enough $M$,
\begin{equation}
    1-\varepsilon \le \frac{\lmax(\Rb_{1,M})}{\lmax(\Rb_{2,M})}. \label{eq:lmin_lmax_1ep}
\end{equation}
Combining \eqref{eq:1ep_le_lmax} and \eqref{eq:lmin_lmax_1ep}, we have
\begin{equation}
    \lim_{M\to\infty}\frac{\lmax(\Rb_{2,M})}{\lmax(\Rb_{1,M})}= 1\,, \label{eq:lambda_ratio_1}
\end{equation}
and \eqref{eq:max_lim_gamma} follows. However, we will prove that almost surely \eqref{eq:max_lim_gamma} cannot be satisfied by $\hRb_M^b$ and $\Rb_M$. Indeed, from \eqref{eq:ratio_consist_hRbRb} we conclude that almost surely 
$$\frac{\lmax(\hRb_M^b)}{\lmax(\Rb_M^b)}\to 1.$$
Thus we only need to prove that
\begin{equation}
    \frac{\lmax(\Rb_M^b)}{\lmax(\Rb_M)}\not\to 1. \label{eq:lmax_lim_not1}
\end{equation}
Let $\K$ and $\K^b$ be two integral operators acting on $L^2(0,1)$ defined by
$$\K(\varphi)(x)=\int_0^1\frac{1}{|x-y|^{1-a}}\varphi(y)\dd y,\quad \K^b(\varphi)(x)=\int_0^1\frac{1-|x-y|}{|x-y|^{1-a}}\varphi(y)\dd y.$$

\begin{lemma} Under the same assumptions as Proposition~\ref{prop:inconsis_Rb}, as $M\to\infty$, 
$$\frac{\lmax(\Rb_M)}{K M^aL(M)}\to \lambda_1(\K), \quad \frac{\lmax(\Rb^b_M)}{K M^aL(M)}\to \lambda_1(\K^b)$$
with some absolute constant $K>0$.
\end{lemma}
\begin{proof}
We first assume that the slowly varying function $L$ in \ref{ass:spec_den_behav_0} equals to $1$. Then from \cite[Proposition~2.2.16]{pipiras2017long},
$$r_k\sim \frac{K}{(1+|k|)^{1-a}}$$
as $k\to\infty$ with some absolute constant $K>0$. Using Widom-Shampine's Lemma (\cite[Lemma~5.1]{merlevede2019unbounded}) and the same method as the proof of \cite[Theorem~2.3]{merlevede2019unbounded}, one can prove that
\begin{equation}
    \frac{\lmax(\Rb_M)}{K M^a}\to \lambda_1(\K), \quad \frac{\lmax(\Rb^b_M)}{K M^a}\to \lambda_1(\K^b). \label{eq:cv_normalized_eigenvalues}
\end{equation}

If the slowly varying function $L$ in \ref{ass:spec_den_behav_0} is not constant, let 
$$\tilde f(\theta)=\frac{1}{|\theta|^a}, \quad \theta \in [-\pi,\pi]$$
and $\tilde \Upsilon^b_M$, $\tilde \Rb_M^b$ be defined with $\tilde f$ in the same way as $\Upsilon^b_M$, $\Rb_M^b$ with $f$. Note that the F\'ejer kernel $F_M$ has the same upper bound as the Dirichlet kernel $D_M(\theta)=\sin((M+1/2)\theta)/\sin(\theta/2)$ used in the proof of \cite[Theorem~2.3]{tian2022joint}, that is, for $\theta\in [-3\pi/2, 3\pi/2]$, 
$$|F_M(\theta)|=\left|\frac{D_0(\theta)+\cdots+D_{M-1}(\theta)}{M}\right|\lesssim \min\{M, |\theta|^{-1}\}.$$
Then using the same technique there, one can prove that
$$\sup_{\theta}\left\|\frac{\Upsilon^b_M(\theta)}{M^aL(M)}-\frac{\tilde\Upsilon^b_M(\theta)}{M^a}\right\|\to 0$$
as $M\to\infty$, which implies that
$$\left\|\frac{\Rb_M^b}{M^aL(M)}-\frac{\tilde\Rb_M^b}{M^a}\right\|\to 0.$$
Also note that by Theorem~2.3 of \cite{tian2022joint},
$$\left\|\frac{\Rb_M}{M^aL(M)}-\frac{\tilde\Rb_M}{M^a}\right\|\to 0,$$
together with \eqref{eq:cv_normalized_eigenvalues}, we have
$$\frac{\lmax(\Rb_M)}{K M^aL(M)}\to \lambda_1(\K), \quad \frac{\lmax(\Rb^b_M)}{K M^aL(M)}\to \lambda_1(\K^b).$$
\end{proof}

From this lemma, we have
\begin{equation}
    \frac{\lmax(\Rb^b_M)}{\lmax(\Rb_M)}\to \frac{\lambda_1(\K^b)}{\lambda_1(\K)}. \label{eq:ratio_lmax}
\end{equation}
We then prove that $\lambda_1(\K)>\lambda_1(\K^b)$. Indeed because the two integral kernels are positive, from the mini-max formula for the largest eigenvalue, their eigenfunctions associated with the largest eigenvalue are positive in $[0,1]$. Let $\varphi^b$ be the eigenfunction of $\K^b$ associated with $\lambda_1(\K^b)$, then
$$\lambda_1(\K^b)=\langle \varphi^b, \K^b\varphi^b\rangle = \langle \varphi^b, \K\varphi^b\rangle- \int_0^1\int_0^1|x-y|^a\varphi^b(x)\varphi^b(y)\dd x\dd y < \lambda_1(\K),$$
from which we conclude that 
\begin{equation}
    \lim_{M\to\infty}\frac{\lmax(\Rb^b_M)}{\lmax(\Rb_M)} = \frac{\lambda_1(\K^b)}{\lambda_1(\K)}<1\,. \label{eq:lmax_less_1}
\end{equation}
This is the end of the proof of proposition since \eqref{eq:lmax_lim_not1} is proved.

\ifthenelse{\boolean{supplementary}}{The proof of other results is given in the supplementary material.}{}

\subsection*{Acknowledgment} 
The authors are financially supported by Department of Statistics and Actuarial Science of the University of Hong Kong. We also thank Professor Romain Couillet in University of Grenoble-Alpes for posing this interesting question and also for fruitful discussions.

\appendix
\section{Additional proofs} \label{sec:other_proofs}

The proof of Corollary~\ref{corol:conv_1/2power} is in \S\ref{subsec:proof_conv_1/2power}; the proof of Proposition~\ref{prop:lsd_consistent} is in \S\ref{subsec:proof_lsd_consist}; and the proof of Proposition~\ref{prop:consistency_pca_whitened} is in \S\ref{subsec:proof_pca_whitened}.

\subsection{Proof of Corollary~\ref{corol:conv_1/2power}}\label{subsec:proof_conv_1/2power}
We write
\begin{equation}
    \left\|\hRb_M^{1/2}\Rb_M^{-1/2}-\sqrt{\xi_N}\Ib\right\|\le \|\Rb_M^{1/4}\|\left\|\Rb_M^{-1/4}\hRb_M^{1/2}\Rb_M^{-1/4}-\sqrt{\xi_N}\Ib\right\|\|\Rb_M^{-1/4}\|, \label{eq:corol_1/2conv}
\end{equation}
where $\|\Rb_M^{-1/4}\|$ is bounded, and from Lemma~\ref{lem:norm_toeplitz}, $\|\Rb_M^{1/4}\|=O(M^{1/4+\varepsilon})$ with any $\varepsilon\in (0,1/8)$. 

The spectral norm $\left\|\Rb_M^{-1/4}\hRb_M^{1/2}\Rb_M^{-1/4}-\sqrt{\xi_N}\Ib\right\|$ equals to 
$$\max\left\{\left|\lmax(\Rb_M^{-1/4}\hRb_M^{1/2}\Rb_M^{-1/4})-\sqrt{\xi_N}\right|, \left|\lmin(\Rb_M^{-1/4}\hRb_M^{1/2}\Rb_M^{-1/4})-\sqrt{\xi_N}\right|\right\}.$$
The positive definite Hermitian matrix $\Rb_M^{-1/4}\hRb_M^{1/2}\Rb_M^{-1/4}$ has the same eigenvalues as $\hRb_M^{1/2}\Rb_M^{-1/2}$, so the latter matrix has $M$ positive eigenvalues. On the other hand, all the eigenvalues of $\hRb_M^{1/2}\Rb_M^{-1/2}$ are between its smallest and largest singular values, that is,
\begin{align*}\sqrt{\lmin(\Rb_M^{-1/2}\hRb_M\Rb_M^{-1/2})} &\le \lmin(\hRb_M^{1/2}\Rb_M^{-1/2}) \\ 
&\le \lmax(\hRb_M^{1/2}\Rb_M^{-1/2})\le \sqrt{\lmax(\Rb_M^{-1/2}\hRb_M\Rb_M^{-1/2})}.\end{align*}
From Theorem~\ref{th:main_th}, if $\xi_N$ is bounded away from $0$, as $N,M\to\infty$ with $N/M\to c\in\positR$, we have almost surely
$$\left|\sqrt{\lmax(\Rb_M^{-1/2}\hRb_M\Rb_M^{-1/2})}-\sqrt{\xi_N}\right|=O(M^{-1/2+\varepsilon}).$$
The same result also holds for $\sqrt{\lmin(\Rb_M^{-1/2}\hRb_M\Rb_M^{-1/2})}$. Therefore, we have almost surely
$$\left\|\Rb_M^{-1/4}\hRb_M^{1/2}\Rb_M^{-1/4}-\sqrt{\xi_N}\Ib\right\|=O(M^{-1/2+\varepsilon}).$$
Taking the above estimations into \eqref{eq:corol_1/2conv}, the result follows.

\subsection{Proof of Proposition~\ref{prop:lsd_consistent}}\label{subsec:proof_lsd_consist}
We first recall the LSD of Toeplitz matrices. If a sequence of Toeplitz matrices $(\Rb_M=(r_{i-j})_{i,j=1}^M)_{M\ge 1}$ have a real spectral density $f\in L^1(-\pi,\pi)$, then by a generalized version of Szeg\H o's Theorem \cite[Theorem~2]{capizzano2002test}, for any continuous function $\varphi$ defined on $\R$ such that $\varphi(x)/(1+|x|)$ is bounded, we have
\begin{equation}
    \lim_{M\to\infty}\frac{1}{M}\sum_{k=1}^M\varphi(\lambda_k(\Rb_M))=\frac{1}{2\pi}\int_{-\pi}^{\pi}\varphi(f(\theta))\dd\theta. \label{eq:szego_cv}
\end{equation}
In particular, the LSD $\mu^{\Rb}$ of $\Rb_M$ will be defined by the identity 
\begin{equation}
\int\varphi\dd \mu^{\Rb}=\frac{1}{2\pi}\int_{-\pi}^{\pi}\varphi(f(\theta))\dd \theta\,, \quad \forall \varphi \in C_b(\R)\,, \label{eq:def_mu_Rb}
\end{equation}
where $C_b(\R)$ denotes the set of bounded continuous functions on $\R$. 

In order to prove Proposition~\ref{prop:lsd_consistent}, we are led to a general result relating the ratio ESD of two Toeplitz matrices with their spectral densities, which may be of independent interest. 

\begin{lemma}\label{lem:ratio_lsd_cv_toep} Let $(\Rb_M=(r_{i-j})_{i,j=1}^M)_{M\ge 1}$ be a sequence of Toeplitz matrices with real spectral density $f\in L^1(-\pi,\pi)$. Let $(f_M)_{M\ge 1}$ be a sequence of real functions in $L^1(-\pi,\pi)$. Let $\Rb_M^{(M)}=(r^{(M)}_{i-j})_{i,j=1}^M$ with
$$r^{(M)}_{k}=\frac{1}{2\pi}\int_{-\pi}^{\pi}f_M(\theta)e^{\im k\theta}\dd\theta$$
the Fourier coefficients of $f_M$. Let $\mu^{\Rb}$ denote the LSD of $\Rb_M$ defined in \eqref{eq:def_mu_Rb}. 
\begin{enumerate}
    \item \label{item:f_M_to_f} If $\|f_M-f\|_1\to 0$, then 
    $$\mu^{\Rb_M^{(M)}}\cvweak \mu^{\Rb}.$$
    \item In addition to (\ref{item:f_M_to_f}), if moreover $f$ is positive and bounded away from $0$, and $f_M/f>a$ for some $a\in\R$, then
    $$\mu^{\Rb_M^{(M)}\Rb_M^{-1}}\cvweak \delta_1.$$
\end{enumerate}
\end{lemma}

First note that by normalization, we can assume $\xi_N=1$ for all $N\in \N$ without loss of generality. Then from the proof of Proposition~\ref{prop:inconsis_Rb}, almost surely, as $M,N\to\infty$ with $N\gg \log^3 M$,
$$\sup_{\theta}\left|\frac{\hUpsilon_M^b(\theta)}{\Upsilon_M^b(\theta)}-1\right|\to 0.$$
Then almost surely
$$\int_0^{2\pi}|\hUpsilon_M^b(\theta)-\Upsilon_M^b(\theta)|\dd\theta\le \sup_{\theta}\left|\frac{\hUpsilon_M^b(\theta)}{\Upsilon_M^b(\theta)}-1\right|\int_0^{2\pi} |\Upsilon_M^b(\theta)|\dd\theta \to 0,$$
where $\int_0^{2\pi} |\Upsilon_M^b(\theta)|\dd\theta$ is bounded because $\Upsilon_M^b(\theta)$ is the Ces\`aro mean of the Fourier series of $f$, and it is well known that $\Upsilon_M^b$ converges to $f$ in $L^1(0,2\pi)$. Then we deduce that almost surely 
$$\int_0^{2\pi}|\hUpsilon_M^b(\theta)-f(\theta)|\dd\theta\to 0.$$
Also note that $\hUpsilon_M^b/f\ge 0$, then $\hUpsilon_M^b$ and $f$ satisfy the conditions of Lemma~\ref{lem:ratio_lsd_cv_toep}. Therefore the result of Proposition~\ref{prop:lsd_consistent} is a corollary of Lemma~\ref{lem:ratio_lsd_cv_toep}. 

It remains to prove Lemma~\ref{lem:ratio_lsd_cv_toep}. If $f_M$ converges in $L^1(0,2\pi)$ to $f$, we denote
$$(f_M-f)_+ = \max\{f_M-f, 0\}, \quad (f_M-f)_- = \max\{ f-f_M, 0\},$$
and
$$\begin{aligned}\Ab_+  &= \left(\frac{1}{2\pi}\int_0^{2\pi}(f_M-f)_+(\theta)e^{\im(i-j)\theta}\dd\theta\right)_{i,j=1}^M, \\ \Ab_- &= \left(\frac{1}{2\pi}\int_0^{2\pi}(f_M-f)_-(\theta)e^{\im(i-j)\theta}\dd\theta\right)_{i,j=1}^M.\end{aligned}$$
Then $\Ab_+,\Ab_-$ are two positive semi-definite Toeplitz matrices satisfying
$$\frac{1}{M}\tr\Ab_\pm=\frac{1}{2\pi}\int_0^{2\pi}(f_M-f)_\pm\dd\theta\to 0.$$
It is easy to prove that there exists a sequence of positive numbers $(\varepsilon_M)_{M\ge 1}$ converging to $0$, such that
$$\frac{\# \{k\tq \lambda_k(\Ab_\pm)>\varepsilon_M\}}{M}\le \varepsilon_M$$
where "$\# S$" denotes the cardinal of the set $S$. From \eqref{eq:szego_cv} we already have $\mu^{\Rb_M}\cvweak \mu^{\Rb}$. We next prove successively that 
\[ \mu^{(\Rb_M+\Ab_+)}\cvweak \mu^{\Rb} \quad\text{ and}\quad  \mu^{(\Rb_M+\Ab_+-\Ab_-)}\cvweak \mu^{\Rb},\]
which is the first result of the lemma. 

Let $\Ab_+=U\diag(\lambda_1,\dots,\lambda_M)U^*$ be a diagonalization of $\Ab_+$ with $\lambda_1,\dots,\lambda_M$ its eigenvalues. Let $\Ab_+^{(1)}=U\diag(\lambda_1\ind_{\lambda_1>\varepsilon_M},\dots,\lambda_M\ind_{\lambda_M>\varepsilon_M})U^*$, and $\Ab_+^{(2)}=U\diag(\lambda_1\ind_{\lambda_1\le\varepsilon_M},\dots,\lambda_M\ind_{\lambda_M\le \varepsilon_M})U^*$. Then the rank of $\Ab_+^{(1)}$ is at most $M\varepsilon_M$, and $\|\Ab_+^{(2)}\|\le \varepsilon_M$. Using Theorem~A.43 and A.45 in \cite{bai2010spectral} successively, we can prove that
$$\mu^{\Rb_M+\Ab_+}\cvweak \mu^{\Rb}.$$
Repeating the same procedure, we have also
$$\mu^{\Rb_M+\Ab_+-\Ab_-}\cvweak \mu^{\Rb}.$$
Thus the first part of the lemma is proved. 

Next we prove the second part. We will first prove the following lemma.

\begin{lemma}\label{lem:my_potential_th}Suppose that the probability measures $\mu_n (n=1,2,\dots)$ and $\mu$ are supported on $[a,+\infty)$. If for any $x<a$, 
\begin{equation}
    \lim_{n\to\infty}\int \log |x-t|\dd\mu_n(t)=\int \log |x-t|\dd\mu(t)<\infty, \label{eq:conv_real_potential}
\end{equation}
then $\mu_n$ converges weakly to $\mu$. 
\end{lemma}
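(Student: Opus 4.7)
The plan is to convert the pointwise convergence of the logarithmic potentials $U_n(x):=\int\log(t-x)\dd\mu_n(t)$ into pointwise convergence of the Stieltjes transforms $s_n(z):=\int(t-z)^{-1}\dd\mu_n(t)$ on the resolvent set $\Omega:=\C\setminus[a,+\infty)$, and then to invoke the standard equivalence between Stieltjes-transform convergence and weak convergence of probability measures. The first ingredient is \emph{tightness}: fix some $x_0<a$ and for $M>a$ split
$$U_n(x_0)=\int_{[a,M]}\log(t-x_0)\dd\mu_n(t)+\int_{(M,+\infty)}\log(t-x_0)\dd\mu_n(t),$$
where the first summand is bounded in absolute value by a constant $C_M$ independent of $n$ (the integrand being bounded on $[a,M]$), while the second summand is bounded below by $\log(M-x_0)\,\mu_n((M,+\infty))$. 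Since $U_n(x_0)\to U(x_0)<\infty$ the sequence $(U_n(x_0))_n$ is bounded, so $\mu_n((M,+\infty))\le (\sup_n U_n(x_0)+C_M)/\log(M-x_0)$, which goes to $0$ as $M\to\infty$ uniformly in $n$; hence $(\mu_n)$ is tight.

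Next I pass from potentials to Stieltjes transforms on the real line. Because $x\mapsto\log(t-x)$ is concave on $(-\infty,t)$, each $U_n$ is concave on $(-\infty,a)$. A classical theorem of convex analysis asserts that pointwise convergence of concave functions on an open interval implies locally uniform convergence, together with convergence of one-sided derivatives at every point where the limit function is differentiable. Since the support of $\mu$ is disjoint from $(-\infty,a)$, the limit $U$ is real-analytic there, with derivative $U'(x)=-s(x)$. Therefore $U_n'(x)\to U'(x)$ at every $x<a$, i.e.\ $s_n(x)\to s(x)$ for all $x<a$.

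To promote this to complex arguments, observe that each $s_n$ is analytic on $\Omega$ with $|s_n(z)|\le\mathrm{dist}(z,[a,+\infty))^{-1}$, so $\{s_n\}$ is locally uniformly bounded on $\Omega$ and hence a normal family by Montel's theorem. Pointwise convergence on the segment $(-\infty,a)$, which has accumulation points in $\Omega$, combined with Vitali's theorem upgrades this to locally uniform convergence $s_n\to s$ on the whole connected set $\Omega$. In particular $s_n(z)\to s(z)$ for every $z$ with $\mathrm{Im}\,z>0$. The conclusion now follows from tightness plus Stieltjes inversion: every subsequence of $(\mu_n)$ admits a further weakly convergent sub-subsequence $\mu_{n_k}\cvweak\nu$ with $\nu$ a probability measure; since $t\mapsto (t-z)^{-1}$ is continuous and bounded on $[a,+\infty)$ for $\mathrm{Im}\,z>0$, the Stieltjes transform of $\nu$ coincides with the limit $s$ on the upper half plane, whence $\nu=\mu$ by the Stieltjes inversion formula; thus the full sequence satisfies $\mu_n\cvweak\mu$.

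The main technical point is the middle step: even though concave functions behave very nicely under pointwise limits, the derivative convergence relies crucially on the fact that the limit $U$ is differentiable at every point of $(-\infty,a)$, which in turn uses that the support of $\mu$ is separated from this interval. The tightness step is also a delicate matter because no tail bound on $\mu_n$ is given a~priori; it has to be extracted from the finiteness of $U_n(x_0)$ together with the logarithmic growth of $\log(M-x_0)$, and this would break down if the potentials were allowed to diverge at $x_0$.
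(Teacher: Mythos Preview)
Your proof is correct, with one small wrinkle in the tightness step: as phrased, $C_M$ is the sup of $|\log(\cdot-x_0)|$ on $[a,M]$, which equals $\log(M-x_0)$ for large $M$, and then $(\sup_n U_n(x_0)+C_M)/\log(M-x_0)\to 1$, not $0$. What the argument actually needs is only the \emph{lower} bound $\int_{[a,M]}\log(t-x_0)\,d\mu_n(t)\ge\min(0,\log(a-x_0))$, a constant independent of $M$; with that constant the bound does tend to $0$ and tightness follows.

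Your route differs from the paper's in a genuine way. You first prove tightness, then exploit concavity of $x\mapsto U_n(x)$ on $(-\infty,a)$ to get $s_n(x)\to s(x)$ there via the convex-analysis fact that pointwise convergence of concave functions forces convergence of derivatives at points of differentiability of the limit, and finally propagate to all of $\Omega$ by Montel--Vitali. The paper instead skips tightness altogether and works with \emph{vague} subsequential limits $\nu$ of mass $\le 1$: since $(z-t)^{-1}$ vanishes at infinity, vague convergence alone gives $s_n(z)\to s_\nu(z)$ on $\Omega$; the paper then \emph{integrates} $s_n$ along a segment from $x_0$ to $z$, rewrites this as $\int(\log(z-t)-\log(x_0-t))\,d\mu_n(t)$ by Fubini, invokes the hypothesis for real $z=x<x_0$, extends the resulting identity to complex $z$ by analyticity, and differentiates to obtain $s_\nu=s_\mu$, hence $\nu=\mu$. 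Tightness falls out a posteriori because the vague limit turns out to be a probability measure. Your approach is more modular, leaning on standard tools (derivative convergence for concave functions, normal families); the paper's integrate-then-continue argument is more self-contained and avoids appealing to the convex-analysis theorem, at the cost of a slightly longer chain of identifications.
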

\begin{proof}\let\qed\relax From every subsequence of $(\mu_n)$ we can extract a subsequence converging vaguely to a positive measure $\nu$ with total mass less than or equal to $1$. Take an arbitrary $x_0<a$. Then for any $z\in \C\backslash (x_0,+\infty)$, because the function $t\mapsto (z-t)^{-1}$ is continuous on the support of $\mu_n$ and $\nu$, and tends to $0$ as $t\to\infty$, we have the convergence of Stieltjes transform
$$s_n(z):=\int\frac{1}{z-t}\dd\mu_n(t) \xrightarrow[n\to\infty]{}\int\frac{1}{z-t}\dd\nu(t)=:s(z).$$
By dominated convergence theorem, we have
$$\int_{x_0}^z s_n(w)\dd w\xrightarrow[n\to\infty]{} \int_{x_0}^z s(w)\dd w,$$
where the integral is taken along the segment from $x_0$ to $z$. Changing the order of integrals, we get
$$\int(\log(z-t)-\log(x_0-t))\dd\mu_n(t)\xrightarrow[n\to\infty]{}\int_{x_0}^z s(w)\dd w,$$
where $\log z=\log |z|+i\arg z$ with $\arg z\in [0,2\pi)$. When $z=x\in (-\infty,x_0]$, the above convergence and the condition \eqref{eq:conv_real_potential} imply that
$$\int_{x_0}^x s(w)\dd w=\int\log(x-t)\dd\mu(t)-\int\log(x_0-t)\dd\mu(t).$$
Extending this equality by analyticity, we have, for $z\in \C\backslash (x_0,+\infty)$,
$$\int_{x_0}^z s(w)\dd w= \int\log(z-t)\dd\mu(t)-\int\log(x_0-t)\dd\mu(t).$$
Differentiating both sides, we get
$$s(z)= \int\frac{1}{z-t}\dd\mu(t).$$
This implies that $\mu=\nu$. Then because the vague limit $\mu$ is a probability measure, we actually have the weak convergence $\mu_n\cvweak \mu$ and Lemma~\ref{lem:my_potential_th} is proved.
\end{proof}

We continue the proof of Lemma~\ref{lem:ratio_lsd_cv_toep}. By Lemma~\ref{lem:my_potential_th}, we only need to prove \begin{equation}
    \lim_{M\to\infty}\frac{1}{M}\log \left|\det(\Rb^{(M)}_M-x\Rb_M)\right|=\frac{1}{2\pi}\int_0^{2\pi}\log|f(\theta)-xf(\theta)|\dd\theta. \label{eq:limit_potential_mov}
\end{equation}
for $x\in (-\infty,a)$. From the condition $f_M/f>a, f>0$, we get $f_M-xf>0$ for $x<a$, thus the Toeplitz matrix $\Rb^{(M)}_M-x\Rb_M$ is positive definite. For any $A>1$ and $t>0$, let 
$$\ell_A(t):=\min(\log(t), A), h_A(t):=\max(\log(t),A)-A.$$
Then it is easily seen that $\ell_A(t)+h_A(t)=\log(t)$, and $\ell_A(t)\to\log (t)$, $h_A(t)\to 0$ for every $t>0$ when $A\to\infty$. Note that $f_M-xf\to (1-x)f$ in $L^1$, then from the first part of the lemma, we have
\begin{equation}
    \frac{1}{M}\sum_{k=1}^M\ell_A(\lambda_k(\Rb^{(M)}_M-x\Rb_M))\xrightarrow[M\to\infty]{} \frac{1}{2\pi}\int_0^{2\pi} \ell((1-x)f(\theta))\dd \theta. \label{eq:sum1_josf}
\end{equation}
Note that if $A$ is large, $h_A(t)<\sqrt{t}$, thus
\begin{equation}
    \begin{aligned}\frac{1}{M}\sum_{k=1}^M h_A(\lambda_k(\Rb^{(M)}_M-x\Rb_M)) & \le \frac{1}{M}\sum_{k=1}^M (\lambda_k(\Rb^{(M)}_M-x\Rb_M))^{\frac{1}{2}}\ind_{\lambda_k(\Rb^{(M)}_M-x\Rb_M)>A} \\
& \le \frac{1}{M\sqrt{A}}\tr (\Rb^{(M)}_M-x\Rb_M) \\
& = \frac{1}{2\pi\sqrt{A}}\int_0^{2\pi} (f_M-xf)(\theta)\dd\theta \end{aligned} \label{eq:sum2_jost}
\end{equation}
Summing \eqref{eq:sum1_josf} and \eqref{eq:sum2_jost}, and let $A\to+\infty$, we get
\begin{equation}
    \lim_{M\to\infty}\frac{1}{M}\log \left|\det(\Rb^{(M)}_M-x\Rb_M)\right|=\frac{1}{2\pi}\int_0^{2\pi}\log|f(\theta)-xf(\theta)|\dd\theta. \label{eq:log_det_toep_conv_1}
\end{equation}

On the other hand, by \cite[Theorem~2]{capizzano2002test} and the hypothesis that $f$ is positive and bounded away from $0$,  we have
\begin{equation}
    \lim_{M\to\infty}\frac{1}{M}\log \left|\det(\Rb_M)\right|=\frac{1}{2\pi}\int_0^{2\pi}\log|f(\theta)|\dd\theta. \label{eq:log_det_toep_conv}
\end{equation}
Take the difference of \eqref{eq:log_det_toep_conv_1} and \eqref{eq:log_det_toep_conv}, we get
$$\frac{1}{M}\log \left|\det(\Rb_M^{-1}\Rb^{(M)}_M-x\Ib)\right|\xrightarrow[M\to\infty]{} \log |1-x|$$
for $x<a$. From Lemma~\ref{lem:my_potential_th}, we have $\mu^{\Rb_M^{-1}\Rb_M^{(M)}}\cvweak \delta_1$. The proof of Lemma~\ref{lem:ratio_lsd_cv_toep} is complete.

\subsection{Proof of Proposition~\ref{prop:consistency_pca_whitened}}\label{subsec:proof_pca_whitened}
By Proposition~\ref{prop:whitening_consistence}, we have
$$\|\Sb_w-\xi_N^{-1}\Sb_Y\|\xrightarrow[]{\as}0.$$
Let $\vb_1,\dots,\vb_p$ and $\vb_{w,1},\dots,\vb_{w,p}$ be the eigenvectors associated with the $p$ largest eigenvalues of $\Sb_Y$ and $\Sb_w$, respectively. Let $\mathbf{P}$ and $\mathbf{P}_w$ be the orthogonal projections into the subspace generated by $\{\vb_1,\dots,\vb_p\}$ and $\{\vb_{w,1},\dots,\vb_{w,p}\}$, respectively. Using the integral formula for eigenprojections, we have
$$\mathbf{P}_w-\mathbf{P}= \frac{1}{2\pi\im}\int_\Gamma [(z-\Sb_w)^{-1}-(z-\xi_N^{-1}\Sb_Y)^{-1}]\dd z,$$
where $\Gamma$ is a contour on the complex plane surrounding the $p$ largest eigenvalues of $\Sb_w$ and $\xi_N^{-1}\Sb_Y$, and keeping the other eigenvalues outside. Because there is a gap between bulk and spike eigenvalues, the distance between the contour and the spectrum of both matrices can be lower bounded. Therefore the resolvents $(z-\Sb_w)^{-1}$ and $(z-\xi_N^{-1}\Sb_Y)^{-1}$ are both uniformly bounded in $M,N$ and $z\in\Gamma$. Using the formula $A^{-1}-B^{-1}=A^{-1}(B-A)B^{-1}$, we deduce that
$$\|\mathbf{P}_w-\mathbf{P}\|\le \frac{1}{2\pi}\int_\Gamma \|(z-\Sb_w)^{-1}\| \|\Sb_w-\xi_N^{-1}\Sb_Y\|\|(z-\xi_N^{-1}\Sb_Y)^{-1}\| |\dd z|\xrightarrow[]{\as}0.$$

On the other hand, by \cite[Theorem~11.3]{yao2015sample}, we know that the matrix $M^{-1/2}\Yb$ is almost surely bounded in spectral norm. Together with Corollary~\ref{corol:conv_1/2power}, we have
$$\frac{1}{\sqrt{M}}\|\Yb_w-\Yb/\sqrt{\xi_N}\|\le \left\|\frac{1}{\sqrt{M}}\Yb\right\|\|\Rb_M^{1/2}\hRb_M^{-1/2}-\xi_N^{-1/2}\|\xrightarrow[]{\as}0.$$
Therefore, 
$$\frac{1}{\sqrt{M}}\|\Yb_{w,pc}-\Yb_{pc}/\sqrt{\xi_N}\|\le \frac{1}{\sqrt{M}}\|\mathbf{P}_w-\mathbf{P}\|\|\Yb_w\|+ \frac{1}{\sqrt{M}}\|\mathbf{P}\|\|\Yb_w-\Yb/\sqrt{\xi_N}\|\xrightarrow[]{\as}0.$$
Then we automatically have 
$$\frac{1}{\sqrt{M}}\|\Yb_{w,pc}-\Yb_{pc}/\sqrt{\xi_N}\|_F\xrightarrow[]{\as}0,$$
because 
$\mathrm{rank}(\Yb_{w,pc}-\Yb_{pc}/\sqrt{\xi_N})\le \mathrm{rank}(\Yb_{w,pc})+\mathrm{rank}(\Yb_{pc})=2p$.

\bibliography{main.bib}{}
\bibliographystyle{plain}
\end{document}